\documentclass[10pt,reqno]{amsart}
\usepackage{color}
\usepackage{enumerate}
\usepackage{amsmath,amssymb}
\usepackage{mathrsfs}

\usepackage{hyperref}

\hypersetup{
    colorlinks=true,
    linkcolor=blue,
    filecolor=magenta,      
    urlcolor=cyan,
}

\newtheorem{theorem}{Theorem}[section]
\newtheorem{lemma}[theorem]{Lemma}
\newtheorem{corollary}[theorem]{Corollary}
\newtheorem{proposition}[theorem]{Proposition}
\newtheorem{open}[theorem]{Open problem}

\theoremstyle{remark}
\newtheorem{remark}[theorem]{Remark}

\theoremstyle{definition}
\newtheorem{assumption}[theorem]{Assumption}
\newtheorem{example}[theorem]{Example}
\newtheorem{definition}[theorem]{Definition}

\newcommand\cbrk{\text{$]$\kern-.15em$]$}}
\newcommand\opar{\text{\,\raise.2ex\hbox{${\scriptstyle|}$}\kern-.34em$($}}
\newcommand\cpar{\text{$)$\kern-.34em\raise.2ex\hbox{${\scriptstyle |}$}}\,}

\def\XXint#1#2#3{{\setbox0=\hbox{$#1{#2#3}{\int}$}
\vcenter{\hbox{$#2#3$}}\kern-.5\wd0}}

\newcommand\bE{\mathbb{E}}

\newcommand\fN{\mathbf{N}}
\newcommand\fR{\mathbf{R}}

\newcommand\cF{\mathcal{F}}

\newcommand\rF{\mathscr{F}}

\newcommand{\mysection}[1]{\section{#1}
\setcounter{equation}{0}}

\begin{document}

\title[Degenerate Cauchy problems with singular coefficients]
{Well-posedness of second-order evolution equations with non-integrable and degenerate coefficients in weighted $\mathrm{L}_p$-spaces}

\author{Ildoo Kim}
\address{Department of mathematics, Korea university, 1 anam-dong
sungbuk-gu, Seoul, south Korea 136-701}
\email{waldoo@korea.ac.kr}
\thanks{This work was supported by the National Research Foundation of Korea(NRF) grant
funded by the Korea government(MSIT) (RS-2025-16065358)}

\author{Kyeong-Hun Kim}
\address{Department of mathematics, Korea university, 1 anam-dong
sungbuk-gu, Seoul, south Korea 136-701}
\email{kyeonghun@korea.ac.kr}
\thanks{This work was supported by the National Research Foundation of Korea(NRF) grant
funded by the Korea government(MSIT) (RS-2025-00556160)}

\subjclass[2010]{35K65, 35K67, 35B65, 35K15}

\keywords{non-integrable coefficients, degenerate evolutionary equations, singular coefficients, Maximal $\mathrm{L}_p$-regularity}

\begin{abstract}
We study the Cauchy problem for inhomogeneous evolution equations with time-dependent, potentially degenerate, and unbounded coefficients. 
A key feature of our work is allowing the principal coefficients to undergo arbitrary blow-up at both the initial and terminal times.
\end{abstract}

\maketitle

\mysection{introduction}

Singular coefficients in partial differential equations are typically characterized by discontinuity, unboundedness, or degeneracy. 
The investigation of second-order PDEs featuring such coefficients is a classical and extensive area of mathematical research, tracing its origins to the seminal works of Picone, Tricomi, Keldyš, Sobolev, and Fichera. 
For a comprehensive treatment of these classical results, Ole\v{i}nik and Radkevi\v{c}'s literature (\cite{OR 1973}) remains a foundational reference.

The field continues to see vigorous activity, with numerous authors exploring various classes of singularities within a framework of weighted  spaces (see, e.g., \cite{DP 2021, KK 2023, KID 2024, DPT 2024, DJV 2024, AFV 2024, DP 2023, MNS 2023, DR 2024, DPT 2023} and the references therein).

Despite this rich literature, a prevailing limitation in contemporary studies is the standard assumption of local integrability. 
Even when coefficients exhibit blow-up behavior near the boundaries or the initial time, they are generally required to remain locally integrable. 
Intuitively, severe growth—such as exponential blow-up—is incompatible with non-zero boundary or initial data. 
However, a crucial insight is that well-posedness can still be recovered even if the coefficients undergo rapid, unrestricted growth near the initial time, provided that the initial condition is strictly zero. 
This observation provides the central motivation for the present paper: we aim to establish that the well-posedness of second-order evolution equations can be rigorously affirmed even in the presence of strongly non-integrable coefficients.

To formalize our theory, we consider the following Cauchy problem:\begin{align}
										\notag
&u_t(t,x) = a^{ij}(t) u_{x^i x^j}(t,x) + b^i(t) u_{x^i}(t,x) + c(t) u(t,x) + f(t,x), \\
										\label{main eqn 1}
&u(0,x) = 0, \quad (t,x) \in (0,T) \times \mathbf{R}^d,
\end{align}
where the spatial dimension $d \in \mathbf{N}$ and the terminal time $T > 0$ are fixed constants. 
Throughout this paper, we adopt the Einstein summation convention, where repeated indices imply summation over $i,j = 1, \dots, d$.
The coefficients $a^{ij}(t)$, $b^i(t)$, and $c(t)$ are real-valued Lebesgue measurable functions defined on the interval $(0,T)$. 
We impose a degenerate ellipticity condition on the principal part, \textit{i.e.}
$$
a^{ij}(t) \xi^i \xi^j \geq \lambda(t)|\xi|^2 \quad \text{for all } t \in (0,T) \text{ and } \xi \in \mathbf{R}^d,
$$
where $\lambda(t) \geq 0$ is a Lebesgue measurable function. 
No additional regularity conditions are imposed, nor is $\lambda(t)$ assumed to be strictly positive. 
Consequently, our framework naturally accommodates cases where the equation becomes irregular or degenerate on sets of positive measure.
Furthermore, we merely assume that the coefficients are locally integrable inside the open interval $(0,T)$.
Specifically, for any $0 < t_1 < t_2 < T$, we have
$$
\int_{t_1}^{t_2} \left( |a^{ij}(s)| + |b^i(s)| + |c(s)| \right) \mathrm{d}s < \infty.
$$
Crucially, this assumption permits the coefficients to exhibit extreme singularities—including exponential blow-up—at the boundaries $t=0$ and $t=T$.
Under these hypotheses, we demonstrate that for a given inhomogeneous source term $f$, the Cauchy problem \eqref{main eqn 1} admits a unique weak solution $u$. 
If we further assume that the zero-order coefficient $c(t)$ is integrable near the initial time, namely $\int_0^t |c(s)| \mathrm{d}s < \infty$ for all $t \in (0,T)$, we can establish the following maximal regularity estimate for any $p,q \in (1,\infty)$ and $T' \in (0,T)$:
\begin{align*}
&\int_0^{T'} \left( \int_{\mathbf{R}^d} |u_{xx}(t,x)|^p \mathrm{d}x \right)^{q/p} \mathrm{e}^{-q\int_0^t c(s)\mathrm{d}s} w(\alpha(t)) \lambda(t)\mathrm{d}t \\
&\quad \lesssim_{d,p,q,w} \int_0^{T'} \left( \int_{\mathbf{R}^d} |f(t,x)|^p \mathrm{d}x \right)^{q/p} \mathrm{e}^{-q\int_0^t c(s)\mathrm{d}s} w(\alpha(t)) (\lambda(t))^{1-q}\mathrm{d}t,\end{align*}
where $w$ represents a one-dimensional Muckenhoupt weight (\textit{cf.} Definition \ref{def weight}) and $\alpha(t) = \int_0^t \lambda(s) \mathrm{d}s$.

Physically, allowing for such strong singularities significantly broadens the modeling capabilities of the equation. 
Singularities in $a^{ij}(t)$ can describe diffusion processes with extreme temporal fluctuations, representing phases of instantaneous or highly rapid propagation. 
Similarly, a blow-up in the advection term $b^i(t)$ captures intervals where transport completely dominates the system's dynamics. 
Finally, an unbounded $c(t)$ can model explosive reaction kinetics, leading to sudden, unbounded growth or decay. 
Because our framework accommodates singularities precisely at the initial time $t=0$, it is exceptionally well-suited for capturing the initial bursts of rapidly evolving phenomena, such as explosive chemical reactions or heat conduction with extreme initial conductivity variations.

The presence of unbounded coefficients introduces substantial difficulties in establishing the existence, uniqueness, and stability of solutions, thereby demanding rigorous mathematical treatment. Analyzing parabolic PDEs with such singular coefficients is inherently challenging and typically relies on specialized methodologies, such as the introduction of tailored weight functions to control the singularities. 
Historically, however, these approaches impose a critical restriction: the coefficients are prohibited from blowing up faster than the reciprocal of the distance to the boundary (cf. \cite{CDK 2015}).
We emphasize that our framework fundamentally relaxes this constraint. Specifically, to guarantee the existence and uniqueness of a weak solution, our coefficients $a^{ij}(t)$ and $b^i(t)$ ($i,j = 1, \dots, d$) are entirely exempt from the standard bound:
$$
|a^{ij}(t)| + |b^i(t)| \lesssim \frac{1}{t} + \frac{1}{T-t}, \quad t \in (0,T).
$$
Likewise, the zero-order coefficient $c(t)$ is not required to satisfy the analogous condition:
$$
|c(t)| \lesssim \frac{1}{t} + \frac{1}{T-t}, \quad t \in (0,T).
$$
In fact, we impose absolutely no restrictions on the asymptotic behavior of these coefficients as they approach the initial and terminal times. 
As previously noted, they are permitted to exhibit singularities that grow faster than any exponential function. 
To the best of our knowledge, the present study is the first to establish well-posedness while allowing the coefficients' blow-up rates to strictly exceed the reciprocal of the distance to the temporal boundaries.

A natural extension of equation \eqref{main eqn 1} would involve coefficients that depend on both time and space, formulated as:
$$
u_t(t,x) = a^{ij}(t,x) u_{x^i x^j}(t,x) + b^i(t,x) u_{x^i}(t,x) + c(t,x) u(t,x) + f(t,x),
$$
$$u(0,x) = 0, \quad (t,x) \in (0,T) \times \mathbf{R}^d.
$$
However, establishing well-posedness for such spatially dependent equations in $L_p$-spaces is highly non-trivial. For instance, even for a simplified spatial operator of the form:
\begin{align*}
a^{ij}(t,x) u_{x^i x^j}(t,x) + b^i(t,x) u_{x^i}(t,x) + c(t,x) u(t,x) \\
= \Delta u(t,x) - d \frac{x^i}{|x|^2} u_{x^i}(t,x) - \lambda u(t,x),
\end{align*}
where $\lambda \in (0,\infty)$, well-posedness in $L_p$-spaces breaks down (as demonstrated in \cite[Example 2.1]{Krylov 2023}).
This implies that generalizing our framework to include spatially dependent coefficients would require imposing stringent structural conditions on their spatial behavior. 
Because our primary objective is to investigate the unique effects of strongly non-integrable temporal singularities, we leave this complex generalization outside the scope of the current work. 
Identifying the precise spatial assumptions necessary to accommodate such coefficients remains a promising avenue for future research.

While extending the present framework to spatially dependent coefficients $a^{ij}(t,x)$ is highly desirable, $L_p$-well-posedness generally breaks down without delicate structural control on $x$, as demonstrated by Krylov's counterexample. 
Nevertheless, our analysis strongly suggests that if the temporal singularity at $t=0$ is isolated via our weighted space approach, one may combine this with spatial conditions—such as Discontinuity of Mean Oscillation (DMO) or Vanishing Mean Oscillation (VMO) conditions on $x$—to recover well-posedness. 
Formulating the precise interplay between non-integrable temporal singularities and spatial regularity remains a crucial open direction.

The remainder of this paper is organized as follows. Section \ref{section main} introduces the principal framework and states our main results. 
In Section \ref{ito calculus}, we establish the existence of a solution to equation \eqref{main eqn 1} by employing It\^o's calculus. 
The uniqueness of this solution is then demonstrated through two independent methods: a Fourier transform approach in Section \ref{fourier method}, and an energy inequality technique in Section \ref{elementary section}. Section \ref{pf main thm} contains the complete rigorous proofs of our main theorems. 
We conclude in the final section by outlining several open problems prompted by our findings.

We conclude the introduction by establishing the primary mathematical notations and conventions utilized throughout this article.
\begin{itemize}
\item The sets of natural numbers and integers are denoted by $\mathbf{N}$ and $\mathbf{Z}$, respectively. We write $\mathbf{R}^d$ for the $d$-dimensional Euclidean space with points $x = (x^1, x^2, \ldots, x^d)$. For $i = 1, \dots, d$, multi-indices $\alpha = (\alpha_1, \dots, \alpha_d)$ with $\alpha_i \in \{0, 1, 2, \dots\}$, and a sufficiently smooth function $u(x)$, we define the partial derivatives as $u_{x^i} = \frac{\partial u}{\partial x^i} = D_i u$ and $D^\alpha u = D_1^{\alpha_1} \cdots D_d^{\alpha_d} u$. The gradient vector is denoted by $u_x = (u_{x^1}, \dots, u_{x^d})$.

\item The space of infinitely differentiable functions on $\mathbf{R}^d$ is denoted by $C^\infty(\mathbf{R}^d)$. Its subspace consisting of functions with compact support is denoted by $C_c^\infty(\mathbf{R}^d)$.

\item For a domain $\mathcal{O} \subset \mathbf{R}^d$ and a normed space $F$, $C(\mathcal{O}; F)$ represents the space of all $F$-valued continuous functions $u$ on $\mathcal{O}$ equipped with the norm $\|u\|_{C} := \sup_{x \in \mathcal{O}} \|u(x)\|_F < \infty$.

\item Let $(X, \mathcal{M}, \mu)$ be a measure space, $F$ be a normed space, and $p \in [1, \infty)$. We denote by $L_p(X, \mathcal{M}, \mu; F)$ the space of all $F$-valued, $\mathcal{M}^\mu$-measurable functions $u$ such that
$$
\|u\|_{L_p(X, \mathcal{M}, \mu; F)} := \left( \int_X \|u(x)\|_F^p \, \mu(\mathrm{d}x) \right)^{1/p} < \infty,
$$
where $\mathcal{M}^\mu$ is the completion of the $\sigma$-algebra $\mathcal{M}$ with respect to $\mu$. 
For $p = \infty$, $u \in L_\infty(X, \mathcal{M}, \mu; F)$ if its essential supremum is finite:
$$
\|u\|_{L_\infty(X, \mathcal{M}, \mu; F)} := \inf \left\{ \nu \geq 0 : \mu(\{ x : \|u(x)\|_F > \nu \}) = 0 \right\} < \infty.
$$
When the measure space and $\sigma$-algebra are clear from the context, they are omitted. 
Unless otherwise specified, we assume the completion of the Borel $\sigma$-algebra generated by the underlying topology.

\item We specifically utilize the completion of the Borel $\sigma$-algebra induced by the standard Euclidean norm, which is called the Lebesgue measurable sets. 
For a Lebesgue-measurable set $\mathcal{O} \subset \mathbf{R}^d$, $|\mathcal{O}|$ denotes its Lebesgue measure. In the context of Euclidean spaces, we use the term ``measurable" interchangeably with ``Lebesgue-measurable."

\item Let $\mathcal{O} \subset \mathbf{R}^d$ and $\mathcal{N} \subset \mathcal{O}$. A function $f$ defined on $\mathcal{O} \setminus \mathcal{N}$ is considered measurable and defined almost everywhere on $\mathcal{O}$ if $\mathcal{N}$ is a null set, and there exists a measurable function $g$ on $\mathcal{O}$ such that $f(x) = g(x)$ for all $x \in \mathcal{O} \setminus \mathcal{N}$. Here, $\mathcal{N}$ is a null set if it is contained in a Borel set of Lebesgue measure zero.

\item For a $d \times d$ matrix $A$, we use the component notation $A = (a^{ij})$ for $i,j = 1, \dots, d$, where $a^{ij}$ represents the entry in the $i$-th row and $j$-th column.

\item 
The notation $\alpha \lesssim \beta$ indicates that there exists a positive constant $N$ such that $\alpha \leq N \beta$. The precise value of $N$ may vary from line to line, or even within the same line. If we write $\alpha \lesssim_{a,b,c} \beta$, it signifies that the implicit constant $N$ depends exclusively on the parameters $a$, $b$, and $c$. The dependencies of such constants are consistently specified in the statements of our theorems, lemmas, and corollaries.
\end{itemize}

\mysection{Setting and main results}
									\label{section main}

To lay the mathematical groundwork, we direct readers seeking foundational context to Krylov's comprehensive text \cite{Krylov 2008}. 

The primary challenges—and consequently, the significance—of our results arise directly from the singularities within the equation's coefficients. To emphasize the novelty of our results, we begin by detailing our core assumptions regarding these terms. Throughout this work, we assume that all coefficients $a^{ij}(t)$, $b^i(t)$, and $c(t)$ (for $i,j = 1, \dots, d$) are real-valued, measurable functions defined on the open interval $(0,T)$.

\begin{assumption}[Degenerate ellipticity and local integrability]
						\label{main as 1}

The principal coefficients $a^{ij}(t)$ satisfy the following conditions:
\begin{enumerate}[(i)]
\item{\bf Degenerate ellipticity:} The coefficient matrix $(a^{ij}(t))$ is non-negative definite, meaning
\begin{align}
										\label{weak elliptic}
a^{ij}(t) \xi^i\xi^j  \geq 0 \quad \forall t \in (0,T)~\text{and}~ \forall \xi=(\xi^1,\ldots,\xi^d) \in \fR^d.
\end{align}
\item{\bf Inside Local integrability:} The components $a^{ij}(t)$ are locally integrable inside $(0,T)$, such that
\begin{align*}
 \int_s^t  \max_{i,j} |a^{ij}(r)| \mathrm{d}r < \infty \quad \text{for all $0<s<t<T$},
\end{align*}
where
\begin{align*}
\max_{i,j} |a^{ij}(t)|
=\max\{ |a^{ij}(t)| : i,j \in \{1,\ldots,d\} \}.
\end{align*}
\end{enumerate}
\end{assumption}

\begin{assumption}
						\label{main as 2}

The first-order coefficients $b^i(t)$ $(i=1,\ldots,d)$ are locally integrable inside $(0,T)$. Specifically,
\begin{align*}
 \int_s^t \max_i|b^i(r)|\mathrm{d}r < \infty \quad \text{for all $0<s\leq t<T$}.
\end{align*}
\end{assumption}

\begin{assumption}
						\label{main as 3}

The zero-order coefficient $c(t)$ is locally integrable inside $(0,T)$, satisfying
\begin{align*}
\int_s^t |c(r)| \mathrm{d}r < \infty \quad \text{for all $0<s\leq t<T$}.
\end{align*}
\end{assumption}

Next, we define a specific class of locally integrable functions on $[0,T) \times \mathbf{R}^d$, which provides the foundation for the rigorous formulation of our weak solutions. 
A real-valued measurable function $f$, defined almost everywhere on $[0,T) \times \mathbf{R}^d$, is said to be locally integrable if
$$
\int_0^t \int_{|x| < r} |f(s,x)| \mathrm{d}x \mathrm{d}s < \infty
$$
for all $t \in (0,T)$ and $r > 0$.
Note that $f(t,x)$ need not be evaluated precisely at $t=0$. 
However, we explicitly utilize the half-open interval $[0,T)$ rather than the open interval $(0,T)$ to emphasize that the time integration extends down to zero. 
This prevents any ambiguity with the preceding assumptions, where the coefficients were only required to be locally integrable strictly inside $(0,T)$.

\begin{definition}[Weak solution]
							\label{def weak sol}
Let $u$ and $f$ be locally integrable functions on $[0,T) \times \mathbf{R}^d$. 
We say that $u$ is a weak solution to the Cauchy problem \eqref{main eqn 1} if, for every test function $\varphi \in C_c^\infty(\mathbf{R}^d)$, the following integral identity holds for all $t \in (0,T)$:
\begin{align}
(u(t,\cdot), \varphi)_{L_2(\mathbf{R}^d)} 
										\notag
&= \int_0^t \left(u(s,\cdot), a^{ij}(s)\varphi_{x^i x^j} - b^i(s)\varphi_{x^i} + c(s)\varphi \right)_{L_2(\mathbf{R}^d)} \mathrm{d}s \\
										\label{20240909 70}
&\quad + \int_0^t (f(s,\cdot), \varphi)_{L_2(\mathbf{R}^d)} \mathrm{d}s, 
\end{align}
where $(\cdot, \cdot)_{L_2(\mathbf{R}^d)}$ denotes the standard spatial inner product in $L_2(\mathbf{R}^d)$, given by
$$
(u(t,\cdot), \varphi)_{L_2(\mathbf{R}^d)} := \int_{\mathbf{R}^d} u(t,x)\varphi(x) \mathrm{d}x.
$$
When necessary, we may refer to $u$ as a weak solution to \eqref{main eqn 1} with data $f$ to explicitly emphasize the associated inhomogeneous term.
\end{definition}

\begin{remark}

Since this paper avoids the use of general distribution-valued functions, our definition of a weak solution relies entirely on the standard $L_2(\mathbf{R}^d)$ inner product. 
As a result, the source term $f$ must be locally integrable on $[0,T) \times \mathbf{R}^d$ to ensure that the final integral in \eqref{20240909 70} is well-defined. 
Throughout the remainder of this work, it is implicitly assumed that any weak solution $u$ and its corresponding source data $f$ satisfy this local integrability requirement, without needing to restate it. 
Furthermore, this integral formulation remains robust even when the coefficients fail to be integrable near the initial time. 
This is because a solution $u$ with a zero initial condition decays rapidly enough to counteract the singularities of the coefficients.

\end{remark}

\begin{remark}
We now recall a standard PDE technique for simplifying the principal coefficients $a^{ij}(t)$. 
The degenerate ellipticity condition \eqref{weak elliptic} does not intrinsically require the leading coefficient matrix $A(t) := (a^{ij}(t))$ to be symmetric, meaning we cannot immediately apply spectral decomposition. 
To resolve this and facilitate element-wise matrix analysis, we replace $A(t)$ with its symmetric part. It is straightforward to verify that $u$ is a weak solution to \eqref{main eqn 1} if and only if it is a weak solution to the symmetrized equation:
$$
u_t(t,x) = \frac{a^{ij}(t) + a^{ji}(t)}{2} u_{x^i x^j}(t,x) + b^i(t)u_{x^i}(t,x) + c(t)u(t,x) + f(t,x),
$$
$$
u(0,x) = 0, \quad (t,x) \in (0,T) \times \mathbf{R}^d.
$$
Thus, we may assume without loss of generality that $A(t)$ is symmetric for all $t \in (0,T)$. 
We have rigorously verified that this standard convention remains valid even for the highly singular coefficients considered in our framework.
Assuming symmetry provides two fundamental analytical advantages:
\begin{itemize}
\item{\bf Square Root Matrix:} It guarantees the existence of a unique positive semi-definite square root matrix, denoted $\sqrt{A}(t)$, satisfying $\sqrt{A}(t)\sqrt{A}(t) = A(t)$.
\item{\bf Spectral Lower Bound:} It allows us to identify a non-negative function $\lambda(t)$ on $(0,T)$—termed a spectral lower-bound of $A(t)$—that satisfies the quadratic form inequality:
$$
\lambda(t)|\xi|^2 \leq a^{ij}(t) \xi^i \xi^j \quad \text{for all } t \in (0,T) \text{ and } \xi \in \mathbf{R}^d.
$$
\end{itemize}
Typically, $\lambda(t)$ is defined as the smallest eigenvalue of $A(t)$. 
However, a complication arises if this eigenvalue is not integrable near the initial time $t=0$ (i.e., if $\int_0^{T'} \lambda(t) \mathrm{d}t = \infty$ for some $T'$). 
We can circumvent this by introducing a truncated lower bound using a positive constant $M$. 
Setting $\lambda(t) \wedge M := \min\{\lambda(t), M\}$, the ellipticity bound still holds:
$$
(\lambda(t) \wedge M)|\xi|^2 \leq a^{ij}(t) \xi^i \xi^j \quad \text{for all } t \in (0,T) \text{ and } \xi \in \mathbf{R}^d.
$$
Crucially, this truncated function is locally integrable down to $t=0$, meaning:
$$
\int_0^{T'} (\lambda(t) \wedge M) \mathrm{d}t < \infty \quad \text{for all } T' \in (0,T).
$$
Consequently, under Assumption \ref{main as 1}, we may assume without loss of generality that $A(t)$ possesses a spectral lower-bound that is locally integrable on $[0,T)$. 
This integrability property plays a pivotal role in the proofs of Theorem \ref{main thm 5} and Theorem \ref{main thm 6}.
\end{remark}

To simplify our presentation throughout the remainder of the paper, we introduce the notation
$$
\mu_{a,b,c}(t) = \left(\max_{i,j}|a^{ij}(t)| + \max_i|b^i(t)| + |c(t)|\right).
$$
With this established, we now state our first main result, which establishes the existence of a weak solution to equation \eqref{main eqn 1}.
\begin{theorem}[Existence of a weak solution]
										\label{main thm 1}
Suppose $f$ is locally integrable on $[0,T) \times \fR^d$ and let $p \in [1,\infty]$. 
Under Assumptions \ref{main as 1}--\ref{main as 3}, if the following supplementary integrability conditions are satisfied:
\begin{enumerate}[(i)]
\item
$$
\int_s^t \mathrm{e}^{\int_s^r c(\rho) \mathrm{d}\rho}\max_{i,j}|a^{ij}(r)| \mathrm{d}r < \infty
$$
for all $0<s<t<T$,

\item 
$$
\int_0^t \mathrm{e}^{\int_s^t |c(r)|\mathrm{d}r} \|f(s,\cdot)\|_{\mathrm{L}_p(\fR^d)}\mathrm{d}s < \infty
$$for all $t \in (0,T)$,
\item
$$
\int_0^{T'}  \mu_{a,b,c}(t) \int_0^t \mathrm{e}^{\int_s^t c(r)\mathrm{d}r}\|f(s,\cdot)\|_{\mathrm{L}_p(\fR^d)}\mathrm{d}s \mathrm{d}t  <\infty
$$
for all $T' \in (0,T)$, 
\end{enumerate}
then there exists a weak solution $u$ to equation \eqref{main eqn 1}, which obeys
$$
\|u(t,\cdot)\|_{\mathrm{L}_p(\fR^d)} \leq \int_0^{t} \mathrm{e}^{\int_s^t c(r)\mathrm{d}r} \|f(s,\cdot)\|_{\mathrm{L}_p(\fR^d)}\mathrm{d}s
$$
as well as
$$
\int_0^{t} \mu_{a,b,c}(s) \|u(s,\cdot)\|_{\mathrm{L}_p(\fR^d)} \mathrm{d}s \leq \int_0^{t} \mu_{a,b,c}(s) \int_0^s \mathrm{e}^{\int_r^s c(\rho)\mathrm{d}\rho} \|f(r,\cdot)\|_{\mathrm{L}_p(\fR^d)}\mathrm{d}r \mathrm{d}s
$$
for any $t \in (0,T)$.
\end{theorem}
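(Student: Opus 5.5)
Following the paper's outline, which builds existence on It\^o's calculus, I will write down an explicit solution through a stochastic representation (equivalently, a Gaussian convolution). Since the coefficients depend only on $t$, no stochastic calculus is needed beyond Gaussian kernels. For $0<s<t<T$ define
\begin{align*}
B(s,t)=\int_s^t b(r)\,\mathrm{d}r,\qquad A(s,t)=\int_s^t A(r)\,\mathrm{d}r .
\end{align*}
Both are finite by Assumptions \ref{main as 1}--\ref{main as 2}, and $A(s,t)$ is nonnegative definite. Let $p_{s,t}$ be the law of $B(s,t)+\sqrt{2A(s,t)}\,Z$ with $Z$ standard Gaussian, i.e.\ a possibly degenerate Gaussian probability measure. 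The candidate solution is
\begin{align*}
u(t,x)=\int_0^t \mathrm{e}^{\int_s^t c(r)\mathrm{d}r}\,\mathbb{E}\bigl[f\bigl(s,x+B(s,t)+\sqrt{2A(s,t)}Z\bigr)\bigr]\,\mathrm{d}s .
\end{align*}
Equivalently, $u(t,\cdot)=\int_0^t \mathrm{e}^{\int_s^tc}\,(f(s,\cdot)*\check p_{s,t})\,\mathrm{d}s$. Minkowski's inequality and the fact that translation and averaging against a probability measure are contractions on $\mathrm{L}_p$ give the first estimate immediately. Condition (ii) shows the $s$-integral converges in $\mathrm{L}_p$ (note $\mathrm{e}^{\int_s^tc}\le \mathrm{e}^{\int_s^t|c|}$), so $u$ is well defined and locally integrable. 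Multiplying the first estimate by $\mu_{a,b,c}$ and integrating gives the second estimate; by (iii) its right-hand side is finite.

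To verify the weak formulation \eqref{20240909 70}, fix $\varphi\in C_c^\infty$. I first handle the truncated problem with data $f\mathbf 1_{(\varepsilon,T)}$. The idea is to differentiate in the outer variable $t$, for fixed $s$, the function
\begin{align*}
g_s(t)=\mathrm{e}^{\int_s^tc}\,\bigl(f(s,\cdot),\,\mathbb{E}\varphi(\cdot-B(s,t)-\sqrt{2A(s,t)}Z)\bigr).
\end{align*}
The function $t\mapsto \mathbb{E}\varphi(y-B(s,t)-\sqrt{2A(s,t)}Z)$ is absolutely continuous on $[s,t_1]$, with derivative
\begin{align*}
\mathbb{E}\bigl[(a^{ij}(t)\varphi_{x^ix^j}-b^i(t)\varphi_{x^i})(\cdot)\bigr].
\end{align*}
This follows via Fourier: the transform is $\hat\varphi(\xi)\exp(-iB\cdot\xi-\xi^{T}A(s,t)\xi)$, which is absolutely continuous in $t$ because $B$ and $A$ are. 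Hence
\begin{align*}
g_s(t)-(f(s),\varphi)=\int_s^t \mathrm{e}^{\int_s^rc}\bigl(f(s)*\check p_{s,r},\,a^{ij}(r)\varphi_{x^ix^j}-b^i(r)\varphi_{x^i}+c(r)\varphi\bigr)\,\mathrm{d}r.
\end{align*}
Integrating in $s\in(0,t)$ and applying Fubini to swap $\int_0^t\mathrm{d}s\int_s^t\mathrm{d}r=\int_0^t\mathrm{d}r\int_0^r\mathrm{d}s$ produces exactly the identity \eqref{20240909 70}. The Fubini step is justified by Hölder: the double integral of the absolute values is bounded by
\begin{align*}
\|\varphi\|_{W^2_{p'}}\int_0^t\mu_{a,b,c}(r)\int_0^r \mathrm{e}^{\int_s^rc}\|f(s)\|_{\mathrm{L}_p}\,\mathrm{d}s\,\mathrm{d}r,
\end{align*}
which is finite by (iii). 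This bound is exactly where the non-integrability of the coefficients at $0$ is compensated. Condition (i) ensures that the pieces with $s>0$ fixed are controlled even where $\mathrm{e}^{\int c}$ is large; that is, that $\int_s^t \mathrm{e}^{\int_s^rc}|a(r)|\,\mathrm{d}r<\infty$ keeps $g_s$ absolutely continuous uniformly in the relevant sense.

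The main obstacle is the justification at the singular endpoint $s\downarrow0$ together with measurability and integrability in $s$. I expect the cleanest route is to prove the identity for $f\mathbf 1_{s>\varepsilon}$, where all coefficients on $[\varepsilon,t]$ are integrable and the computation is classical. Then let $\varepsilon\to0$, using dominated convergence in each term of \eqref{20240909 70}; the domination is provided by (ii) for the left side and the source term, and by (iii) for the coefficient term. A secondary technical point is joint measurability of $(s,t,x)\mapsto f(s)*\check p_{s,t}(x)$, which follows from continuity of $p_{s,t}$ in $(s,t)$ for the weak topology.
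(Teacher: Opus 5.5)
Your argument is correct, and it takes a genuinely different route from the paper.

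\textbf{The paper's route.} It works on the solution side. It mollifies $f$ in $x$ and proves via It\^o's formula (Theorem \ref{classic existence solution}) that the probabilistic formula with data $f^{(\varepsilon)}$ is a classical solution. It then integrates by parts against $\varphi$ to obtain the weak identity for $u^{(\varepsilon)}$, and finally lets $\varepsilon\downarrow 0$.

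\textbf{Your route.} You work on the test-function side. Since $X_{s,t}$ is Gaussian with mean $B(s,t)$ and covariance $2A(s,t)$, you move the kernel onto $\varphi$. For each fixed $s>0$, where all coefficients are integrable on $[s,t]$, you differentiate in $t$ through the symbol $\cF[\varphi](\xi)\exp(-\mathrm{i}B(s,t)\cdot\xi-\xi^{\top}A(s,t)\xi)$. You then integrate in $s$. The only interchange is controlled by (iii) via $\|a^{ij}(r)\varphi_{x^ix^j}-b^i(r)\varphi_{x^i}+c(r)\varphi\|_{\mathrm{L}_{p'}}\lesssim_d\mu_{a,b,c}(r)\|\varphi\|_{W^2_{p'}}$.

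\textbf{What each buys.} Your route needs no stochastic calculus and no smoothing of $f$, and you never have to verify the hypotheses of Theorem \ref{classic existence solution}. It also covers $p=\infty$ verbatim. The paper's limit step invokes strong $\mathrm{L}_p$ convergence $f^{(\varepsilon)}\to f$, which is unavailable for $p=\infty$ and must be replaced by a local argument there. The paper's route yields as a by-product that $u^{(\varepsilon)}$ is a classical solution (Remark \ref{mollifier remark}), which is used later for Theorem \ref{main thm 6}. Since the coefficients do not depend on $x$, this also follows from your weak identity by testing with $\varphi^{(\varepsilon)}(x-\cdot)$.

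\textbf{Small corrections.}
\begin{itemize}
\item The truncation $f\mathbf{1}_{(\varepsilon,T)}$ and the ``obstacle at $s\downarrow 0$'' are unnecessary. The fixed-$s$ computation never touches $(0,s)$, and (iii) alone justifies Fubini on $\{0<s<r<t\}$.
\item Your remark on (i) is off. For $0<s<t<T$, Assumption \ref{main as 3} makes $\mathrm{e}^{\int_s^r c(\rho)\mathrm{d}\rho}$ bounded on $[s,t]$. Hence (i) follows from Assumptions \ref{main as 1} and \ref{main as 3} and plays no role in your proof.
\item The $t$-derivative should read $\bE[(a^{ij}(t)\varphi_{x^ix^j}-b^i(t)\varphi_{x^i})(\cdot-X_{s,t})]$.
\item Local integrability of $u$ comes from $\sup_{s\le t}\|u(s,\cdot)\|_{\mathrm{L}_p}\le\int_0^t \mathrm{e}^{\int_r^t|c(\rho)|\mathrm{d}\rho}\|f(r,\cdot)\|_{\mathrm{L}_p}\mathrm{d}r$. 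This is where the $|c|$ in (ii) matters.
\item Measurability is best handled by taking a Borel version of $f$ and applying Tonelli in $(s,x,z)$, rather than by weak continuity of $p_{s,t}$.
\end{itemize}
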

The proof of this theorem relies on techniques from stochastic calculus. To avoid overwhelming the reader with the extensive mathematical machinery of It\^o calculus upfront, we defer the rigorous proof to a later section (see Theorem \ref{stochastic weak existence}).

\begin{remark}
In Theorem \ref{main thm 1}, the implication $(ii) \implies (iii)$ fails because the coefficients are unbounded. Conversely, the reverse implication $(iii) \implies (ii)$ is also false, owing to the fact that the coefficients are permitted to be arbitrarily degenerate.
\end{remark}

We now turn our attention to the uniqueness of weak solutions to equation \eqref{main eqn 1}. 
Although the hypotheses on the source term $f$ could technically be relaxed if we were only concerned with uniqueness, discussing uniqueness is vacuous without first ensuring existence. 
Therefore, we restrict our focus to function classes where both the existence and uniqueness of a weak solution are simultaneously guaranteed.
For $p \in (1,\infty)$, the $L_p$-spaces are reflexive Banach spaces and thus benefit from robust functional analytic properties. 
In contrast, the boundary cases $p=1$ and $p=\infty$ lack reflexivity, introducing notorious technical challenges when handling $L_p$-valued functions. 
Because these extreme cases are both mathematically demanding and of profound theoretical interest—especially since existence has already been established by the preceding theorem—we choose to first tackle the highly intriguing case of $p=1$.

\begin{theorem}[Existence and uniqueness of a weak solution in $L_1$]
										\label{main thm 3-1}

Suppose $f$ is locally integrable on $[0,T) \times \fR^d$ and that Assumptions \ref{main as 1}--\ref{main as 3} hold. 
Assuming further that conditions (i)–(iii) of Theorem \ref{main thm 1} are met for $p=1$, equation \eqref{main eqn 1} possesses a unique weak solution $u$ belonging to the class of functions defined by the following four properties
\begin{enumerate}[(i)]
\item 
(Spatial Integrability) For any $t \in (0,T)$,$$\|u(t,\cdot)\|_{\mathrm{L}_{1}(\fR^d)} < \infty.$$
\item
(Weighted Fourier Integrability) For any $T' \in (0,T)$, the following integral is finite:
$$
\int_0^{T'}\mu_{a,b,c}(t)\|\cF[u(t,\cdot)]\|_{\mathrm{L}_{\infty}(\fR^d)} \mathrm{d}t < \infty,
$$
where $\cF[u(t,\cdot)]$ denotes the spatial Fourier transform defined by $\cF[u(t,\cdot)](\xi) = \frac{1}{(2\pi)^{d/2}} \int_{\fR^d} \mathrm{e}^{-ix \cdot \xi} u(t,x)\mathrm{d}x$.
\item
(Temporal Continuity in Frequency) The map $t \mapsto \cF[u(t,\cdot)](\xi)$ is continuous on $[0,T)$ for almost every $\xi \in \fR^d$.
\item
(Initial vanishing condition) For almost every $\xi \in \fR^d$ and all $t \in (0,T)$,
$$
\lim_{\varepsilon \downarrow 0}\exp\left(\int_\varepsilon^t\left(-a^{ij}(s)\xi^i\xi^j+c(s) \right)\mathrm{d}s  \right) \cF[u(\varepsilon,\cdot)](\xi)=0.
$$
\end{enumerate}
Additionally, this unique solution $u$ satisfies
$$
\int_0^{t} \mu_{a,b,c}(s) \|u(s,\cdot)\|_{\mathrm{L}_1(\fR^d)} \mathrm{d}s \leq \int_0^{t} \mu_{a,b,c}(s) \int_0^s \mathrm{e}^{\int_r^s c(\rho)\mathrm{d}\rho} \|f(r,\cdot)\|_{\mathrm{L}_1(\fR^d)}\mathrm{d}r \mathrm{d}s
$$
for any $t \in (0,T)$.
\end{theorem}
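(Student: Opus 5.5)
The proof splits into existence, which comes from Theorem \ref{main thm 1}, and uniqueness, which is a Fourier-side ODE argument.

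\textbf{Existence.} Apply Theorem \ref{main thm 1} with $p=1$. This produces a weak solution $u$ with
$$
\|u(t,\cdot)\|_{\mathrm{L}_1} \leq \int_0^t \mathrm{e}^{\int_s^t c}\|f(s,\cdot)\|_{\mathrm{L}_1}\mathrm{d}s,
$$
which is finite by (ii) and gives property (i). The weighted bound stated at the end of the theorem is the second estimate of Theorem \ref{main thm 1}. Property (ii) follows from that bound, since $\|\cF[u(t,\cdot)]\|_{\mathrm{L}_\infty}\leq (2\pi)^{-d/2}\|u(t,\cdot)\|_{\mathrm{L}_1}$ and condition (iii) holds.

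For (iii) and (iv), insert $\varphi(x)=\mathrm{e}^{-ix\cdot\xi}$ into \eqref{20240909 70}. This test function is not compactly supported, so first use $\varphi_R(x)=\mathrm{e}^{-ix\cdot\xi}\chi(x/R)$ and let $R\to\infty$. The passage is justified by dominated convergence, using $u(t,\cdot)\in \mathrm{L}_1$, $\int_0^t\mu_{a,b,c}\|u\|_{\mathrm{L}_1}<\infty$, and (ii) of Theorem \ref{main thm 1}, which gives $f\in \mathrm{L}_1((0,t)\times\fR^d)$. The result is, for every $\xi$,
$$
\hat u(t,\xi)=\int_0^t\Big(-a^{ij}(s)\xi^i\xi^j - i b^i(s)\xi^i + c(s)\Big)\hat u(s,\xi)\,\mathrm{d}s+\int_0^t\hat f(s,\xi)\,\mathrm{d}s .
$$
The integrand lies in $\mathrm{L}_1(0,t)$, so $t\mapsto\hat u(t,\xi)$ is absolutely continuous on $[0,T)$ with $\hat u(0,\xi)=0$. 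This gives (iii).

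For (iv), the absolute continuity lets us solve the linear ODE on $[\varepsilon,t]$. Write $\Phi(s,t)=\exp\int_s^t(-a^{ij}\xi^i\xi^j-ib^i\xi^i+c)$. Then
$$
\hat u(t,\xi)=\Phi(\varepsilon,t)\hat u(\varepsilon,\xi)+\int_\varepsilon^t\Phi(s,t)\hat f(s,\xi)\,\mathrm{d}s .
$$
Note that $|\Phi(s,t)|$ is exactly the real exponential appearing in (iv), and it is bounded by $\mathrm{e}^{\int_s^t c}$ because $a^{ij}\xi^i\xi^j\geq 0$. The integral converges as $\varepsilon\downarrow0$ by condition (ii), and $\hat u(\varepsilon,\xi)\to\hat u(0,\xi)=0$ by continuity. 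It is not automatic, however, that $\Phi(\varepsilon,t)\hat u(\varepsilon,\xi)\to0$, because $\Phi(\varepsilon,t)$ may blow up like $\mathrm{e}^{\int_\varepsilon^t |c|}$ when $c$ is non-integrable at $0$.

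I would get this limit from the ODE identity itself. Rearranging it gives
$$
\Phi(\varepsilon,t)\hat u(\varepsilon,\xi)=\hat u(t,\xi)-\int_\varepsilon^t\Phi(s,t)\hat f(s,\xi)\,\mathrm{d}s,
$$
so the left side converges, to $\hat u(t,\xi)-\int_0^t\Phi(s,t)\hat f(s,\xi)\,\mathrm{d}s$. Showing that this limit is $0$ is the same as showing the representation formula $\hat u(t,\xi)=\int_0^t\Phi(s,t)\hat f(s,\xi)\,\mathrm{d}s$. Here I would use the structure of the constructed solution: the It\^o construction in Theorem \ref{stochastic weak existence} presumably gives $u$ explicitly as $\int_0^t\mathrm{e}^{\int_s^tc}\,\bE f(s,x+X_{s,t})\,\mathrm{d}s$. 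Its Fourier transform is then directly $\int_0^t\Phi(s,t)\hat f(s,\xi)\,\mathrm{d}s$, because $X_{s,t}$ is Gaussian with mean $\int_s^t b$ and covariance $2\int_s^t A$. With that formula the limit is $0$ and (iv) holds.

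\textbf{Uniqueness.} Let $u_1,u_2$ be two weak solutions in the class and set $v=u_1-u_2$, a weak solution with $f=0$. Property (i) makes $\hat v(t,\cdot)$ well defined. The approximation $\varphi_R\to \mathrm{e}^{-ix\cdot\xi}$ is now justified through property (ii), which controls $\int_0^{T'}\mu_{a,b,c}|\hat v|$. Property (ii) alone does not obviously suffice, though. The truncation error contains $(u,\,a^{ij}\partial_{ij}\varphi_R)$-type terms, and controlling them requires $\mathrm{L}_1$-type integrability of $\mu_{a,b,c}\,v$ in space-time, not just of $\mu_{a,b,c}\,\hat v$. I would try to recover that from property (i) and the local integrability of $v$ in Definition \ref{def weak sol}; otherwise one tests directly against Schwartz functions whose Fourier transforms are compactly supported.

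With the identity in hand, the function $g(t)=\hat v(t,\xi)$ is absolutely continuous on each $[\varepsilon,t]\subset(0,T)$ and solves $g'=(-a\xi\cdot\xi-ib\cdot\xi+c)g$. Hence $g(t)=\Phi(\varepsilon,t)g(\varepsilon)$. Letting $\varepsilon\downarrow0$ and using property (iv), together with $|\Phi|=$ the real exponential in (iv), gives $g(t)=0$ for a.e. $\xi$. Thus $\hat v(t,\cdot)=0$, so $v(t,\cdot)=0$ in $\mathrm{L}_1$ for every $t$.

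The main obstacle is verifying property (iv) for the constructed solution when $c$ is non-integrable at $0$, which is where the explicit It\^o representation is essential.
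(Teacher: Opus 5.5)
\textbf{Gap: the uniqueness half never establishes the frequency-side identity.} The existence half is fine, but for $v=u_1-u_2$ you never derive $\cF[v(t,\cdot)](\xi)=\int_0^t\bigl(-a^{ij}(s)\xi^i\xi^j+\mathrm{i}b^i(s)\xi^i+c(s)\bigr)\cF[v(s,\cdot)](\xi)\,\mathrm{d}s$. That identity is the entire content of the uniqueness proof; in the paper it is Lemma \ref{pre representation}.

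The uniqueness class does not include $\int_0^{T'}\mu_{a,b,c}(t)\|v(t,\cdot)\|_{\mathrm{L}_1(\fR^d)}\,\mathrm{d}t<\infty$. Your first proposed repair cannot produce it. Property (i) and the local integrability of $v$ say nothing about integrals against $\mu_{a,b,c}$, which may be non-integrable at $t=0$. Also, $\|\cF[v(s,\cdot)]\|_{\mathrm{L}_\infty}\lesssim\|v(s,\cdot)\|_{\mathrm{L}_1}$ only goes one way. Your second repair, testing against Schwartz functions, is not directly available: Definition \ref{def weak sol} admits only $\mathrm{C}_c^\infty$ test functions, and extending it runs into exactly the truncation error you were worried about. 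The real misstep is the claim that property (ii) ``does not obviously suffice''. It does, once the truncation errors are estimated on the frequency side.

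\textbf{The missing estimate.} For each fixed $s$, property (i) gives $v(s,\cdot)\in\mathrm{L}_1$. So Parseval (Lemma \ref{Plancherel lemma}) gives, for $\eta\in\mathrm{C}_c^\infty$, $|(v(s,\cdot),\eta_{x^ix^j})_{\mathrm{L}_2(\fR^d)}|\le\|\cF[v(s,\cdot)]\|_{\mathrm{L}_\infty(\fR^d)}\int_{\fR^d}|\zeta|^2|\cF[\eta](\zeta)|\,\mathrm{d}\zeta$. The first- and zeroth-order terms are handled the same way.

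Take $\eta_R(x)=\mathrm{e}^{-\mathrm{i}x\cdot\xi}\chi(x/R)$. Then $|\cF[\eta_R](\zeta)|=R^d|\cF[\chi](R(\zeta+\xi))|$, whose moments of order at most two are bounded by $C(1+|\xi|)^2$ uniformly in $R\ge1$. Hence the time integrands are dominated by $C(1+|\xi|)^2\mu_{a,b,c}(s)\|\cF[v(s,\cdot)]\|_{\mathrm{L}_\infty(\fR^d)}$, which is integrable on $(0,t)$ precisely by property (ii). For each fixed $s$ the integrands converge by dominated convergence in $x$. So your own $\varphi_R\to\mathrm{e}^{-\mathrm{i}x\cdot\xi}$ limit goes through for every $\xi$ and every $t$, and your ODE argument together with (iv) then finishes the proof.

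\textbf{How the paper does it.} The paper reaches the same identity slightly differently. It rewrites the weak form, with $\mathrm{C}_c^\infty$ test functions, on the frequency side via Parseval. It uses (ii) to justify Fubini, tests with $\psi^{(\varepsilon)}(x-\cdot)$, inverts in $x$, and lets $\varepsilon\downarrow0$. It then uses (iii) to pass from ``each $t$, a.e.\ $\xi$'' to ``a.e.\ $\xi$, all $t$''.

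\textbf{Remarks on the existence half.} Deriving (iii) from the weak form is a valid alternative to the paper's dominated-convergence argument on the explicit formula of Lemma \ref{Fourier taking lemma}. Property (iv) is not really the main obstacle. Once that formula is available, it is the one-line bound $(2\pi)^{-d/2}\int_0^\varepsilon\mathrm{e}^{\int_s^t c(r)\mathrm{d}r}\|f(s,\cdot)\|_{\mathrm{L}_1(\fR^d)}\,\mathrm{d}s\to0$. Finally, the drift symbol should be $+\mathrm{i}b^i\xi^i$, because $\partial_{x^i}\mathrm{e}^{-\mathrm{i}x\cdot\xi}=-\mathrm{i}\xi^i\mathrm{e}^{-\mathrm{i}x\cdot\xi}$ and the weak form contains $-b^i\varphi_{x^i}$. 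This matches the mean $\int_s^t b(r)\,\mathrm{d}r$ of $X_{s,t}$ and does not affect any modulus bound.
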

For the sake of exposition, we postpone the proof of this theorem. It will be restated as Theorem \ref{p1 thm} alongside its complete proof.
For $p \in (1,\infty)$, we can also establish both the existence and uniqueness of a weak solution within a slightly different function class. 
The corresponding theorem is stated below.

\begin{theorem}[Existence and uniqueness of a weak solution in $L_p$]
										\label{main thm 3}
Let $p \in (1,\infty)$ and suppose $f$ is a locally integrable function on $[0,T) \times \mathbf{R}^d$. Assume that Assumptions \ref{main as 1}--\ref{main as 3} hold, and further suppose that conditions (i)–(iii) of Theorem \ref{main thm 1} are satisfied for $p \in (1,\infty)$.
Then, there exists a unique weak solution $u$ to equation \eqref{main eqn 1} within the class of functions satisfying $\|u(t,\cdot)\|_{L_p(\mathbf{R}^d)} < \infty$ for all $t \in (0,T)$, the asymptotic initial condition
$$
\lim_{\delta \downarrow 0} \mathrm{e}^{\int_\delta^t c(r)\mathrm{d}r} \|u(\delta,\cdot)\|_{L_p(\mathbf{R}^d)} = 0,
$$
and the weighted integrability constraint
$$
\int_0^t \mu_{a,b,c}(s) \|u(s,\cdot)\|_{L_p(\mathbf{R}^d)} \mathrm{d}s < \infty.
$$
Moreover, for any $t \in (0,T)$, the solution $u$ obeys the bounds
\begin{align*}
 \|u(t,\cdot)\|_{\mathrm{L}_p(\fR^d)}
\leq  \int_0^{t} \mathrm{e}^{\int_s^t c(r)\mathrm{d}r} \|f(s,\cdot)\|_{\mathrm{L}_p(\fR^d)}\mathrm{d}s
\end{align*}
and
\begin{align*}
&\int_0^{t} \mu_{a,b,c}(s) \|u(s,\cdot)\|_{\mathrm{L}_p(\fR^d)} \mathrm{d}s \\
&\leq \int_0^{t} \mu_{a,b,c}(s) \int_0^s \mathrm{e}^{\int_r^s c(\rho)\mathrm{d}\rho} \|f(r,\cdot)\|_{\mathrm{L}_p(\fR^d)}\mathrm{d}r \mathrm{d}s.
\end{align*}
Assuming additionally that the coefficient $c(t)$ is locally integrable on $[0,T)$, meaning
\begin{align*}
\int_0^{T'} |c(t)| \mathrm{d}t < \infty \quad \forall T' \in (0,T),
\end{align*}
it follows that the map $t \mapsto \|u(t,\cdot)\|_{\mathrm{L}_p(\fR^d)}$ is absolutely continuous on $[0,T']$ for any $T' \in (0,T)$. 
Furthermore, we establish the following estimate:
\begin{align*}
\sup_{t \in [0,T']} \mathrm{e}^{-\int_0^t c(r)\mathrm{d}r}\|u(t,\cdot)\|_{\mathrm{L}p(\fR^d)}
\leq \int_0^{T'} \mathrm{e}^{-\int_0^s c(r)\mathrm{d}r} \|f(s,\cdot)\|_{\mathrm{L}_p(\fR^d)}\mathrm{d}s.
\end{align*}

\end{theorem}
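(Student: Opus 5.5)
Existence and the two bounds come straight from Theorem \ref{main thm 1}. It supplies a weak solution $u$ with
$$
\|u(t,\cdot)\|_{L_p}\le \int_0^t \mathrm{e}^{\int_s^t c}\|f(s,\cdot)\|_{L_p}\,\mathrm{d}s ,
$$
together with the weighted integral bound. Condition (iii) makes the weighted integrability constraint finite. For the asymptotic initial condition I would multiply the pointwise bound at time $\delta$ by $\mathrm{e}^{\int_\delta^t c}$, which gives
$$
\mathrm{e}^{\int_\delta^t c}\|u(\delta,\cdot)\|_{L_p}\le \int_0^\delta \mathrm{e}^{\int_s^t c}\|f(s,\cdot)\|_{L_p}\,\mathrm{d}s .
$$
By (ii) and dominated convergence the right-hand side tends to $0$ as $\delta\downarrow0$, using $\mathrm{e}^{\int_s^t c}\le \mathrm{e}^{\int_s^t|c|}$. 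So the constructed solution lies in the stated class.

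For uniqueness, let $v$ be the difference of two solutions in the class. It is a weak solution with $f=0$ and satisfies all three class properties. I would mollify in $x$: set $v^\varepsilon=v*\eta_\varepsilon$ and test \eqref{20240909 70} with $\eta_\varepsilon(x-\cdot)$. This shows that for each $x$, and for $0<\delta<t<T$,
$$
v^\varepsilon(t,x)-v^\varepsilon(\delta,x)=\int_\delta^t\big(a^{ij}v^\varepsilon_{x^ix^j}+b^iv^\varepsilon_{x^i}+cv^\varepsilon\big)\,\mathrm{d}s .
$$
The integrand is controlled by $\mu_{a,b,c}(s)\|v(s)\|_{L_p}N_\varepsilon$, which is integrable by the weighted constraint. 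On $[\delta,t]$ all coefficients are integrable, so I would remove the drift and the zero-order term by the change of variables
$$
w(s,x)=\mathrm{e}^{-\int_\delta^s c}\,v^\varepsilon\Big(s,\,x-\int_\delta^s b\Big).
$$
Then $w_s=a^{ij}(s)w_{x^ix^j}$ on $(\delta,t)$, and $w$ is smooth in $x$ and absolutely continuous in $s$. For this degenerate heat equation I would show that $s\mapsto\|w(s)\|_{L_p}$ is nonincreasing. One route is the representation $w(t)=G_{\delta,t}*w(\delta)$, where $G_{\delta,t}$ is the Gaussian with covariance $2\int_\delta^t A$, a probability measure even when degenerate, followed by Young's inequality. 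This requires uniqueness for the smooth problem with bounded coefficients on $[\delta,t]$, which I would get by Fourier transform in $x$ as in the later Fourier section. Alternatively, compute $\frac{\mathrm{d}}{\mathrm{d}s}\int|w|^p$ and integrate by parts using $a^{ij}\xi^i\xi^j\ge0$; the boundary terms are justified by $v(s)\in L_p$ and smoothness of $v^\varepsilon$.

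Either route gives
$$
\|v^\varepsilon(t)\|_{L_p}\le \mathrm{e}^{\int_\delta^t c}\|v^\varepsilon(\delta)\|_{L_p}\le \mathrm{e}^{\int_\delta^t c}\|v(\delta)\|_{L_p}.
$$
Letting $\delta\downarrow0$ and using the asymptotic initial condition gives $v^\varepsilon(t)=0$, and then $v(t)=0$ as $\varepsilon\to0$. The main obstacle is the rigorous $L_p$ contraction for $w$ when $v$ is only $L_p$ in $x$, since $w$ need not be smooth in time. I would handle it through the Fourier/Gaussian representation, using that $\hat w(\cdot,\xi)$ solves a scalar linear ODE with integrable coefficient on $[\delta,t]$ for each fixed $\xi$, tested against Schwartz functions.

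For the final claims, assume $\int_0^{T'}|c|<\infty$. I would rerun the argument above with $f$ present, now with $\delta\downarrow0$ allowed in the exponent. This gives
$$
\mathrm{e}^{-\int_0^t c}\|u(t)\|_{L_p}\le \int_0^t \mathrm{e}^{-\int_0^s c}\|f(s)\|_{L_p}\,\mathrm{d}s .
$$
This is really the pointwise bound from Theorem \ref{main thm 1} multiplied by $\mathrm{e}^{-\int_0^t c}$, and taking the supremum over $t\le T'$ yields the stated estimate. For absolute continuity of $t\mapsto\|u(t)\|_{L_p}$, the same semigroup/Duhamel representation applied between two times $s<t$ gives
$$
\big|\,\|u(t)\|_{L_p}-\|u(s)\|_{L_p}\big|\le\int_s^t\big(|c(r)|\,\|u(r)\|_{L_p}+\|f(r)\|_{L_p}\big)\,\mathrm{d}r .
$$
The first term is fine because $\|u(r)\|_{L_p}$ is bounded on $[0,T']$ and $c$ is integrable there. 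For the second term I would use that (ii) makes $\|f\|_{L_p}$ integrable on $[0,T']$. The diffusion contributes nothing to this difference because its Gaussian convolution is an $L_p$ contraction, up to a continuity-in-time argument for the translations and convolutions.
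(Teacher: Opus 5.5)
Your existence step, the check that the constructed $u$ lies in the class, the uniqueness argument, and the supremum bound all follow the paper's route (Theorem \ref{stochastic weak existence}, Lemma \ref{a priori lemma} with $f=0$, Theorem \ref{unique weak}). For uniqueness this means: mollify, remove $b$ and $c$ by the substitution $\mathrm{e}^{-\int_\delta^s c}\,v^{(\varepsilon)}(s,x-\int_\delta^s b)$, use $a^{ij}\xi^i\xi^j\ge 0$ to show the $\mathrm{L}_p$-norm does not increase, then let $\delta\downarrow 0$ and $\varepsilon\downarrow 0$.

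The gap is the absolute continuity of $t\mapsto\|u(t,\cdot)\|_{\mathrm{L}_p(\fR^d)}$. Your two-sided bound
\[
\bigl|\|u(t,\cdot)\|_{\mathrm{L}_p(\fR^d)}-\|u(s,\cdot)\|_{\mathrm{L}_p(\fR^d)}\bigr|\le\int_s^t\bigl(|c(r)|\,\|u(r,\cdot)\|_{\mathrm{L}_p(\fR^d)}+\|f(r,\cdot)\|_{\mathrm{L}_p(\fR^d)}\bigr)\,\mathrm{d}r
\]
is false. That the Gaussian convolution is an $\mathrm{L}_p$-contraction only yields
\[
\|u(t,\cdot)\|_{\mathrm{L}_p(\fR^d)}\le \mathrm{e}^{\int_s^t c(r)\mathrm{d}r}\|u(s,\cdot)\|_{\mathrm{L}_p(\fR^d)}+\int_s^t \mathrm{e}^{\int_r^t c(\rho)\mathrm{d}\rho}\|f(r,\cdot)\|_{\mathrm{L}_p(\fR^d)}\,\mathrm{d}r .
\]
This limits how fast the norm can grow, but not how fast diffusion makes it decay. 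For a counterexample, take $a^{ij}=\delta^{ij}$, $b=c=0$ and $f(r,x)=h(x)\mathbf{1}_{(0,s)}(r)$ with $0\neq h\in \mathrm{L}_p(\fR^d)$. For $t>s$, $u(t,\cdot)$ is the heat-kernel convolution of $u(s,\cdot)\neq 0$. By strict convexity, $\|u(t,\cdot)\|_{\mathrm{L}_p(\fR^d)}<\|u(s,\cdot)\|_{\mathrm{L}_p(\fR^d)}$ for $p\in(1,\infty)$, whereas your right-hand side is $0$.

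What the one-sided bound does give is that $\mathrm{e}^{-\int_0^t c}\|u(t,\cdot)\|_{\mathrm{L}_p(\fR^d)}-\int_0^t \mathrm{e}^{-\int_0^r c}\|f(r,\cdot)\|_{\mathrm{L}_p(\fR^d)}\,\mathrm{d}r$ is nonincreasing. So the norm is of bounded variation, and it is continuous at $t=0$. To get absolute continuity you must also rule out a singular part of the dissipative decrease. One way is an energy identity for $u^{(\varepsilon)}$ on $[s,T']$, $s>0$, where all coefficients are integrable. There you would show that the dissipation term $p(p-1)\int_{\fR^d}|u^{(\varepsilon)}|^{p-2}a^{ij}u^{(\varepsilon)}_{x^i}u^{(\varepsilon)}_{x^j}\,\mathrm{d}x$ is uniformly integrable in time as $\varepsilon\downarrow 0$. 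Bounded variation plus continuity at $0$ then extends absolute continuity from each $[s,T']$ to $[0,T']$.

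Citing the paper does not close this gap. The proof of Theorem \ref{stochastic weak existence} reaches the same two-sided bound by discarding the shift $X_{r,t}$ via ``translation invariance''. That step is not legitimate, because the shift depends on $r$ (and on $\omega$), and the example above contradicts that bound as well.
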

We defer the proof of this theorem to Section \ref{elementary section}, where it will be restated and rigorously proven as Theorem \ref{other p thm}.

\begin{definition}
For $p \in (1,\infty)$, we let $\mathrm{AC}_{0,loc}([0,T); L_p(\mathbf{R}^d))$ denote the space of all locally integrable functions $u$ on $[0,T) \times \mathbf{R}^d$ that satisfy the following four conditions:
\begin{enumerate}[(i)]
\item For any $T' \in (0,T)$, the $L_p$-norm of $u$ is uniformly bounded over $[0,T']$, meaning that
$$
\sup_{t \in [0,T']} \|u(t,\cdot)\|_{L_p(\mathbf{R}^d)} := \sup_{t \in [0,T']} \left( \int_{\mathbf{R}^d} |u(t,x)|^p \mathrm{d}x \right)^{1/p} < \infty.
$$

\item For any $T' \in (0,T)$, the mapping $t \mapsto u(t,\cdot)$ is an absolutely continuous function from $[0,T']$ into $L_p(\mathbf{R}^d)$. 
That is, for every $\varepsilon > 0$, there exists a $\delta > 0$ such that for any finite collection of pairwise disjoint subintervals $(s_i, t_i) \subset [0,T']$, we have
$$
\sum_i (t_i - s_i) < \delta \implies \sum_i \|u(t_i,\cdot) - u(s_i,\cdot)\|_{L_p(\mathbf{R}^d)} < \varepsilon.
$$

\item  For almost every $x \in \mathbf{R}^d$ and any $T' \in (0,T)$, the temporal mapping $t \mapsto u(t,x)$ is a real-valued absolutely continuous function on $[0,T']$.

\item The function vanishes at the initial time in the $L_p$-sense, namely 
$$
\|u(0,\cdot)\|_{L_p(\mathbf{R}^d)} = 0.
$$
\end{enumerate}
\end{definition}

\begin{remark}
										\label{absolute remark}
Assume $p \ge 1$. Absolutely continuous functions taking values in $\mathrm{L}_p(\mathbf{R}^d)$ possess several significant characteristics. 
One primary property is that the absolute continuity of an $\mathrm{L}_p(\mathbf{R}^d)$-valued function on an interval is directly tied to its norm. 
Specifically, the mapping $t \mapsto u(t,\cdot)$ is absolutely continuous on $[0,T']$ if and only if its real-valued norm $t \mapsto \|u(t,\cdot)\|_{\mathrm{L}_p(\mathbf{R}^d)}$ is absolutely continuous on that same interval.
Furthermore, when $p > 1$, the space $\mathrm{L}_p(\mathbf{R}^d)$ possesses the Radon–Nikodým property. 
Because of this, any absolutely continuous $\mathrm{L}_p(\mathbf{R}^d)$-valued function $u$ defined on $[0,T']$ can be expressed via an integral representation:
$$
u(t) = u(0) + \int_0^t u_t(s) \, \mathrm{d}s \quad \forall t \in [0,T'].
$$
Here, the equality is understood in the $\mathrm{L}_p(\mathbf{R}^d)$-sense, and $u_t(s)$ represents the Fréchet derivative. 
If we consider $u$ as a function of both time and space on $[0,T'] \times \mathbf{R}^d$, this representation implies that for any given time $t \in [0,T']$, the equation $u(t,x) = u(0,x) + \int_0^t u_t(s,x) \, \mathrm{d}s$ holds for almost every $x \in \mathbf{R}^d$.
It is important to note that this pointwise equation does not automatically guarantee that the scalar function $t \mapsto u(t,x)$ is absolutely continuous on $[0,T']$ for almost every $x$. 
To achieve this, a straightforward and sufficient requirement is simply that $t \mapsto u(t,x)$ is continuous on $[0,T']$ for almost every $x \in \mathbf{R}^d$. 
When this continuity is combined with the integral equation, it guarantees that $t \mapsto u(t,x)$ is indeed an absolutely continuous real-valued function. 
Consequently, the formal definition of the space $\mathrm{AC}_{0,\mathrm{loc}}([0,T); \mathrm{L}_p(\mathbf{R}^d))$ for $p \in (1,\infty)$ can be simplified. 
The standard condition $(iii)$ can be replaced with the less demanding requirement that the mapping $t \mapsto u(t,x)$ is continuous on $[0,T']$ for almost every $x \in \mathbf{R}^d$ and for all $T' \in (0,T)$. 
For further extensive details on these properties, one can refer to literature such as \cite{HNVW 2016}, specifically Lemma 1.3.7 and Theorem 2.5.12.
\end{remark}

Because singularities in the coefficients complicate the standard analysis of solutions and data in traditional $\mathrm{L}_p$-spaces, we address this issue by utilizing weighted $\mathrm{L}_p$-spaces instead. 
Furthermore, since the coefficients depend entirely on time, the corresponding weights are also defined strictly in terms of the temporal variable.

\begin{definition}[Temporal weighted $\mathrm{L}_{p,q}$-spaces]
To formalize our functional framework, assume $p,q \in [1,\infty]$ and let $\mu$ represent a non-negative measurable weight function defined almost everywhere on the interval $(0,T)$.
We introduce the space $\mathrm{L}_{q,p,loc}((0,T) \times \fR^d, \mu(t)\mathrm{d}t)$ to denote the collection of all measurable functions $f$ defined $a.e.$ on $(0,T) \times \fR^d$ that satisfy a specific local integrability condition. 
Specifically, for any intermediate time $T' \in (0,T)$, the associated weighted mixed norm must be finite:
$$
\|f\|_{\mathrm{L}_{q,p}((0,T') \times \fR^d, \mu(t)\mathrm{d}t)} := \|f\|_{\mathrm{L}_{q}((0,T') , \mu(t)\mathrm{d}t ; \mathrm{L}_p(\fR^d))} < \infty.
$$
Explicitly, this norm evaluates to
$$
\begin{cases}
\left[\int_0^{T'} \|f(t,\cdot)\|^q_{\mathrm{L}_p(\fR^d)} \mu(t) \, \mathrm{d}t \right]^{1/q} < \infty & \text{if } q \in [1,\infty), \\
\inf\left\{ M : \mu(\{t : \|f(t,\cdot)\|_{\mathrm{L}_p(\fR^d)} > M\}) = 0 \right\} < \infty & \text{if } q=\infty,
\end{cases}
$$
where we utilize the measure notation $\mu(\mathrm{d}t) = \mu(t)\mathrm{d}t$.
For the standard unweighted scenario where $\mu(t)=1$ a.e. on $(0,T)$, we streamline the notation by dropping the measure.
 In this case, we simply write $\mathrm{L}_{q,p,loc}((0,T) \times \fR^d)$ for the space itself and $\|f\|_{\mathrm{L}_{q,p}((0,T') \times \fR^d)}$ for its corresponding norm.
\end{definition}

\begin{remark}
Let $\mu$ be a non-negative measurable function defined almost everywhere on $(0,T)$, and let $m(\mathrm{d}t)$ represent the standard Lebesgue measure on $\mathbf{R}$. 
When equivalence classes are formed based on equality $\mu$-almost everywhere, the space $\mathrm{L}_{q,p,\mathrm{loc}}((0,T) \times \mathbf{R}^d, \mu(t)\mathrm{d}t)$ constitutes a Banach space. 
However, a complication arises if the weight $\mu(t)$ is zero on a set of positive Lebesgue measure, as two functions being equal $\mu$-almost everywhere does not guarantee they are equal $m$-almost everywhere. 
Because of this discrepancy, if we define our equivalence classes using $m$-almost everywhere equality, the expression $\|f\|_{\mathrm{L}_{q,p}((0,T') \times \mathbf{R}^d, \mu(t)\mathrm{d}t)}$ functions strictly as a semi-norm rather than a true norm. 
To eliminate any potential confusion between these two interpretations, we explicitly reject the $\mu$-almost everywhere convention. 
Consequently, for the remainder of the paper, we treat the space $\mathrm{L}_{q,p,\mathrm{loc}}((0,T) \times \mathbf{R}^d, \mu(t)\mathrm{d}t)$ as a topological vector space induced by this semi-norm, and we emphasize that every reference to ``almost everywhere" or ``a.e." applies exclusively to the standard Lebesgue measure.
Nonetheless, to guarantee the uniqueness of limits, we implicitly pass to the associated quotient space whenever taking limits is required.
\end{remark}

\begin{assumption}
						\label{main as 4}

We assume the coefficient $c(t)$ is locally integrable on the interval $[0,T)$, meaning that for every $t \in (0,T)$, we have
$$
\int_0^t |c(s)| \, \mathrm{d}s < \infty.
$$
\end{assumption}

Assumption \ref{main as 4} imposes a tighter condition compared to Assumption \ref{main as 3}. 
By operating under this more stringent hypothesis, we can identify a highly precise functional class and derive straightforward criteria for the well-posedness of the problem. 
Furthermore, it naturally yields a sharper upper bound for the temporal supremum of $u$.

\begin{theorem}[Temporal estimate of a weak solution]
										\label{main thm 4}
Suppose $p \in 1,\infty)$ and the source term $f$ belongs to $\mathrm{L}_{1,p,loc}((0,T) \times \fR^d)$. 
Provided that Assumptions \ref{main as 1}, \ref{main as 2}, and \ref{main as 4} are satisfied, along with the integrability condition
$$
\int_0^{T'} \mu_{a,b,c}(t) \int_0^t \|f(s,\cdot)\|_{\mathrm{L}_p(\fR^d)} \, \mathrm{d}s \, \mathrm{d}t < \infty
$$
for every $T' \in (0,T)$, it follows that the unique weak solution $u$ to equation \eqref{main eqn 1}—as established in Theorems \ref{main thm 3-1} and \ref{main thm 3}—belongs to the spaces
$$
u \in \mathrm{AC}_{0,loc}([0,T) ; \mathrm{L}_p(\fR^d)) \cap \mathrm{L}_{1,p,loc}((0,T) \times \fR^d, \mu_{a,b,c}(t)\mathrm{d}t)
$$
and
$$
u \in  \mathrm{L}_{\infty,p,loc}\left((0,T) \times \fR^d, \mathrm{e}^{-\int_0^t c(s)\mathrm{d}s}\mathrm{d}t\right).
$$
Additionally, for any $T' \in (0,T)$, this solution $u$ fulfills the following estimates:
$$
\sup_{t \in [0,T']} \mathrm{e}^{-\int_0^t c(r)\mathrm{d}r} \|u(t,\cdot)\|_{\mathrm{L}_p(\fR^d)} \leq \sup_{t \in [0,T']} \int_0^{t} \mathrm{e}^{-\int_0^s c(r)\mathrm{d}r} \|f(s,\cdot)\|_{\mathrm{L}_p(\fR^d)} \, \mathrm{d}s,
$$
as well as
$$
\int_0^{T'} \mu_{a,b,c}(t) \|u(t,\cdot)\|_{\mathrm{L}_p(\fR^d)} \, \mathrm{d}t \leq \mathrm{e}^{\int_0^{T'} |c(t)|\mathrm{d}t} \int_0^{T'} \mu_{a,b,c}(t) \int_0^t \|f(s,\cdot)\|_{\mathrm{L}_p(\fR^d)} \, \mathrm{d}s \, \mathrm{d}t.
$$

\end{theorem}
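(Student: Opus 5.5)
Our plan is to reduce Theorem \ref{main thm 4} to the existence and uniqueness results already stated (Theorems \ref{main thm 1}, \ref{main thm 3-1} and \ref{main thm 3}), exploiting the fact that Assumption \ref{main as 4} makes the exponential factors harmless. Fix $T' \in (0,T)$ and put $C_{T'} := \int_0^{T'}|c(r)|\,\mathrm{d}r < \infty$. For $0 \le s \le t \le T'$ we have $\mathrm{e}^{\int_s^t c} \le \mathrm{e}^{\int_s^t|c|} \le \mathrm{e}^{C_{T'}}$. Hence:
\begin{itemize}
\item condition (ii) of Theorem \ref{main thm 1} follows from $f \in \mathrm{L}_{1,p,loc}$;
\item condition (iii) follows from the stated integrability hypothesis, with an extra factor $\mathrm{e}^{C_{T'}}$.
\end{itemize}
For condition (i), the integral $\int_s^t \mathrm{e}^{\int_s^r c}\max|a^{ij}|\,\mathrm{d}r$ is bounded by $\mathrm{e}^{C_{T'}}\int_s^t\max|a^{ij}|$, which is finite by Assumption \ref{main as 1}. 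Thus Theorems \ref{main thm 1}, \ref{main thm 3-1} and \ref{main thm 3} apply and produce the unique weak solution $u$. The bound
$$\int_0^{T'}\mu_{a,b,c}\|u\|_{\mathrm{L}_p}\,\mathrm{d}t \le \int_0^{T'}\mu_{a,b,c}(t)\int_0^t \mathrm{e}^{\int_s^t c}\|f(s)\|_{\mathrm{L}_p}\,\mathrm{d}s\,\mathrm{d}t$$
then yields the second estimate of the theorem after inserting $\mathrm{e}^{\int_s^tc}\le \mathrm{e}^{C_{T'}}$. It also gives membership in $\mathrm{L}_{1,p,loc}(\mu_{a,b,c}\,\mathrm{d}t)$.

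For the supremum estimate, we multiply the pointwise bound $\|u(t)\|_{\mathrm{L}_p} \le \int_0^t \mathrm{e}^{\int_s^t c}\|f(s)\|_{\mathrm{L}_p}\,\mathrm{d}s$ by $\mathrm{e}^{-\int_0^t c}$. Since $c$ is integrable at $0$, the exponent splits as $\int_s^t c - \int_0^t c = -\int_0^s c$. This gives exactly
$$\mathrm{e}^{-\int_0^t c}\|u(t)\|_{\mathrm{L}_p} \le \int_0^t \mathrm{e}^{-\int_0^s c}\|f(s)\|_{\mathrm{L}_p}\,\mathrm{d}s.$$
Taking the supremum over $t\in[0,T']$ gives the first estimate and the membership $u\in \mathrm{L}_{\infty,p,loc}(\mathrm{e}^{-\int_0^t c}\,\mathrm{d}t)$. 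The same inequality also yields property (i) of $\mathrm{AC}_{0,loc}$, because the weight is bounded below by $\mathrm{e}^{-C_{T'}}$. Letting $t\downarrow0$ gives $\|u(t)\|_{\mathrm{L}_p}\to0$, which settles property (iv) once we define $u(0,\cdot)=0$.

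The main obstacle is the absolute continuity, i.e.\ properties (ii) and (iii) of $\mathrm{AC}_{0,loc}$. The $p=1$ case also needs separate care, since Theorem \ref{main thm 3} only covers $p>1$. Our plan is to derive, for a.e.\ $x$ and in $\mathrm{L}_p$, the strong identity
$$u(t) = \int_0^t \bigl(a^{ij}u_{x^ix^j} + b^iu_{x^i} + cu + f\bigr)\,\mathrm{d}s.$$
Derivatives of $u$ are not controlled, so we will instead mollify in $x$. Put $u^\varepsilon = u * \eta_\varepsilon$. The weak formulation (Definition \ref{def weak sol}) applied with $\varphi = \eta_\varepsilon(x-\cdot)$ gives
$$u^\varepsilon(t,x) = \int_0^t \bigl(a^{ij}u^\varepsilon_{x^ix^j} + b^iu^\varepsilon_{x^i} + cu^\varepsilon + f^\varepsilon\bigr)(s,x)\,\mathrm{d}s$$
pointwise for every $x$. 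Here the integrand lies in $\mathrm{L}_1((0,T');\mathrm{L}_p)$ with norm $\lesssim_\varepsilon \int_0^{T'}(\mu_{a,b,c}\|u\|_{\mathrm{L}_p} + \|f\|_{\mathrm{L}_p})$, which is finite by the previous step. Thus each $u^\varepsilon$ is absolutely continuous in $\mathrm{L}_p$ and continuous in $t$ for every $x$, but the bound degenerates as $\varepsilon\to0$.

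To pass to the limit, we would instead apply Theorem \ref{main thm 3}'s norm estimate, or the $p=1$ bound, to the differences $u(t)-u(\tau)$. The key observation is that $v(t) := u(\tau+t)$ solves the same equation started at time $\tau$ with initial datum $u(\tau)$, and the heat-type flow is a contraction on $\mathrm{L}_p$ up to the factor $\mathrm{e}^{\int c}$. The natural tool for this is the It\^o representation of Section \ref{ito calculus}:
$$u(t,x) = \int_0^t \mathrm{e}^{\int_s^t c}\,\bE f\Bigl(s, x + \int_s^t b + \int_s^t \sqrt{2A}\,\mathrm{d}W\Bigr)\,\mathrm{d}s.$$
With it, continuity of translations in $\mathrm{L}_p$ together with dominated convergence gives continuity of $t\mapsto u(t)$ in $\mathrm{L}_p$, and absolute continuity of the $\mathrm{L}_p$-norm follows from the absolutely continuous majorant $\int_0^t \mathrm{e}^{\int_s^tc}\|f(s)\|_{\mathrm{L}_p}\,\mathrm{d}s$. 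Remark \ref{absolute remark} then transfers this to the pointwise properties: first the continuity of $t\mapsto u(t,x)$ for a.e.\ $x$, then the integral representation. For $p=1$ we cannot use the Radon--Nikodým property, so there we would rely on the explicit representation directly.
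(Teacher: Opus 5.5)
\textbf{Verdict: the final step has a genuine gap, and it cannot be closed because the claim there is false.}

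\textbf{What is correct.} Your reduction to Theorems~\ref{main thm 1}, \ref{main thm 3-1} and \ref{main thm 3} matches the paper's Steps I--II. Checking their conditions (i)--(iii) via $\mathrm{e}^{\int_s^t c(r)\mathrm{d}r}\le \mathrm{e}^{\int_0^{T'}|c(r)|\mathrm{d}r}$ is right. So are the two estimates, the memberships in $\mathrm{L}_{1,p,loc}((0,T)\times\fR^d,\mu_{a,b,c}(t)\mathrm{d}t)$ and $\mathrm{L}_{\infty,p,loc}$, and properties (i) and (iv) of $\mathrm{AC}_{0,loc}$.

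\textbf{Where the argument breaks.} The gap is at properties (ii)--(iii) of $\mathrm{AC}_{0,loc}$. The $\mathrm{L}_p$-continuity of $t\mapsto u(t,\cdot)$ that you get from the It\^o representation is fine. None of the upgrades you draw from it is valid.
\begin{enumerate}
\item Continuity in $\mathrm{L}_p$ is not absolute continuity.
\item A bound $\|u(t,\cdot)\|_{\mathrm{L}_p}\le F(t)$ with $F$ absolutely continuous says nothing about the increments of $t\mapsto\|u(t,\cdot)\|_{\mathrm{L}_p}$.
\item Absolute continuity of the norm does not give absolute continuity of the $\mathrm{L}_p$-valued map. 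The ``if'' half of Remark~\ref{absolute remark} is false: $t\mapsto 1_{[0,1]}(\cdot+t)$ has constant norm, yet $\|1_{[0,1]}(\cdot+t)-1_{[0,1]}(\cdot+s)\|_{\mathrm{L}_p}=(2|t-s|)^{1/p}$ for $|t-s|\le1$.
\item Remark~\ref{absolute remark} treats continuity of $t\mapsto u(t,x)$ for a.e.\ $x$ as an extra requirement, not as a consequence of $\mathrm{L}_p$-continuity.
\end{enumerate}
Your own flow idea shows the obstruction. Write $u(t)-u(\tau)=(S(t,\tau)-I)u(\tau)+\int_\tau^t S(t,s)f(s)\,\mathrm{d}s$, where $S$ is the homogeneous solution operator. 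The first term is governed by the roughness of $u(\tau)$ along the flow, not by $t-\tau$.

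\textbf{Counterexample.} Take $d=1$, $T=2$, $a\equiv0$, $c\equiv0$, $b=1_{(1,2)}$, and $f(t,x)=1_{(0,1)}(t)g(x)$ with $g=1_{[0,1]}$.
\begin{enumerate}
\item Assumptions~\ref{main as 1}, \ref{main as 2}, \ref{main as 4} hold, $f\in\mathrm{L}_{1,p,loc}$, and the double integral in the hypothesis equals $(T'-1)^+$.
\item The weak solution is $u(t,x)=t\,g(x)$ for $t\le1$ and $u(t,x)=g(x+t-1)$ for $1\le t<2$. It satisfies \eqref{20240909 70} directly and lies in the uniqueness classes. It also agrees with the probabilistic formula, since $X_{s,t}=t-1$ for $s<1\le t$.
\item On $[1,T']$ one has $\|u(t,\cdot)-u(s,\cdot)\|_{\mathrm{L}_p}=(2|t-s|)^{1/p}$. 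Cutting $[1,1+\delta]$ into $n$ equal pieces gives total increment $(2\delta)^{1/p}n^{1-1/p}\to\infty$ when $p>1$, so (ii) fails.
\item For every $x\in(2-T',1)$, the map $t\mapsto u(t,x)$ jumps from $1$ to $0$ at $t=2-x$. No modification of $u$ on a null set removes this, so (iii) fails for every $p$.
\end{enumerate}
This is not only a degeneracy effect. For $a^{ij}=\delta^{ij}$, $b=c=0$, absolute continuity in $\mathrm{L}_p$ for all $f\in\mathrm{L}_{1,p,loc}$ would, by the Radon--Nikod\'ym property, force $\Delta u\in\mathrm{L}_1((0,T');\mathrm{L}_p(\fR^d))$. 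That is maximal $\mathrm{L}_1$-regularity, which is known to fail.

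\textbf{Comparison with the paper.} The paper's proof has the same hole. It obtains $\mathrm{AC}_{0,loc}$ only by combining the absolute continuity of $t\mapsto\|u(t,\cdot)\|_{\mathrm{L}_p}$ from Theorem~\ref{stochastic weak existence} with the false equivalence in Remark~\ref{absolute remark}. What survives is everything you proved before the absolute-continuity step, plus continuity of $t\mapsto u(t,\cdot)$ in $\mathrm{L}_p$.
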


Although this theorem follows straightforwardly from Theorem \ref{main thm 3-1} and Theorem \ref{main thm 3}, its complete proof is provided in Section \ref{pf main thm}.

\begin{remark}
We emphasize the critical role that the space $\mathrm{AC}_{0,loc}([0,T) ; \mathrm{L}_p(\mathbf{R}^d))$ plays in establishing a  desirable regularity property for our weak solution. 
Because our solution to equation \eqref{main eqn 1} resides in this class, the temporal mapping $t \mapsto u(t,x)$ is differentiable almost everywhere in $t \in (0,T)$ for almost every fixed $x \in \mathbf{R}^d$. 
Consequently, the pointwise time derivative $u_t(t,x)$ exists almost everywhere on $(0,T) \times \mathbf{R}^d$, despite $u$ being defined only in a weak sense. 
This is a notable result; it is generally challenging to infer such pointwise differentiability directly from standard weak formulations, primarily because limiting processes do not typically preserve absolute continuity.
\end{remark}

\begin{remark}
Assuming the coefficients $a^{ij}(t)$, $b^i(t)$, and $c(t)$ are locally integrable on $[0,T)$—meaning they satisfy the condition
$$
\int_0^t \left(\sum_{i=1}^d\sum_{j=1}^d|a^{ij}(s)|+\sum_{i=1}^d |b^i(s)| + |c(s)|\right) \mathrm{d}s < \infty \quad \text{for all } t \in (0,T),
$$
equation \eqref{main eqn 1} admits a unique weak solution within a broader functional class, specifically referred to as a Fourier-space weak solution. 
Remarkably, this result holds entirely independently of any ellipticity assumptions. For an in-depth discussion, we refer the reader to \cite{CK 2024, CK 2024-2}.
\end{remark}

Muckenhoupt weights are fundamental tools in both harmonic analysis and the theory of partial differential equations, making the study of weighted estimates for solutions and data a standard practice. 
However, as noted earlier, traditional Muckenhoupt estimates are incompatible with our current framework. Instead, we must consider adapted estimates that can handle singular coefficients. 
Furthermore, because incorporating spatial weights introduces significant technical complexity, we defer their analysis for now and focus exclusively on one-dimensional Muckenhoupt weights applied to the time variable.
 While comprehensive details on the properties of these weights can be found in literature such as \cite{Gra 2014}, we will limit our current discussion to recalling their formal definition and providing a fundamental example.

\begin{definition}[Muckenhoupt's weight]
							\label{def weight}
For any $p \in (1,\infty)$, the notation $\mathrm{A}_p(\fR)$ designates the set of all non-negative, locally integrable functions $w$ defined on $\fR$ that satisfy the Muckenhoupt condition
$$
[w]_{\mathrm{A}_p(\fR)} := \sup_{-\infty < a < b < \infty} \left( \frac{1}{b-a} \int_a^b w(t) \, \mathrm{d}t \right) \left( \frac{1}{b-a} \int_a^b w(t)^{-1/(p-1)} \, \mathrm{d}t \right)^{p-1} < \infty.
$$
A direct consequence of this finiteness condition is that $w(t) > 0$ almost everywhere on $\fR$.
\end{definition}
\begin{remark}
									\label{mucken remark}
A straightforward and notable example of an $\mathrm{A}_p(\fR)$-weight is the absolute value function raised to a restricted power. 
Specifically, it is a well-established result that the function $|t|^\beta$ belongs to the Muckenhoupt class $\mathrm{A}_p(\fR)$ provided the exponent satisfies the condition $-1 < \beta < p-1$ (\textit{cf.} \cite[Example 7.1.7]{Gra 2014}).
\end{remark}

Next, we introduce Sobolev derivatives via a weak formulation. While multiple approaches exist for defining these derivatives, we deliberately adopt this specific method. 
We acknowledge that a comprehensive review of these elementary concepts may seem overly exhaustive to some readers. 
However, we have chosen to present them in full due to the non-standard nature of our weighted Sobolev spaces.
 Furthermore, within this generalized framework, the Sobolev derivatives emerge as locally integrable functions, demanding a level of rigorous treatment that is not typically found in conventional literature.

\begin{definition}[Sobolev derivative]
Let $\alpha$ be a multi-index and suppose $v$ is a locally integrable function on $\fR^d$, meaning that for any finite radius $r > 0$, the following holds:
$$
\int_{|x| < r} |v(x)| \, \mathrm{d}x < \infty.
$$
We define a locally integrable function $v^\alpha$ on $\fR^d$ to be the Sobolev derivative (or weak derivative) of $v$ with respect to the multi-index $\alpha$ if it satisfies the following integration-by-parts relation for every test function $\varphi \in \mathrm{C}_c^\infty(\fR^d)$:
$$
\int_{\fR^d} v(x) D^\alpha \varphi(x) \, \mathrm{d}x = (-1)^{|\alpha|} \int_{\fR^d} v^\alpha(x) \varphi(x) \, \mathrm{d}x.
$$

It is straightforward to verify that this Sobolev derivative is unique up to a set of measure zero. 
That is, if $v_1^\alpha$ and $v_2^\alpha$ are both Sobolev derivatives of $v$ for the same multi-index $\alpha$, then $v_1^\alpha(x) = v_2^\alpha(x)$ almost everywhere on $\fR^d$. 
We standardly denote this unique equivalence class by $D^\alpha v$.
For lower-order derivatives where $|\alpha|=1$ or $2$, we adopt conventional subscript notation, utilizing $v_{x^i}$ and $v_{x^ix^j}$ instead of $D^\alpha v$ (for $i,j \in \{1, \ldots, d\}$). 
Correspondingly, $v_x$ represents the gradient vector, and $v_{xx}$ denotes the Hessian matrix composed of these second-order Sobolev derivatives. 
Because weak mixed partial derivatives commute ($v_{x^ix^j} = v_{x^jx^i}$), it immediately follows that the Hessian matrix $v_{xx}$ is symmetric.
\end{definition}

\begin{definition}[Temporal weighted Sobolev spaces]
Let $p, q \in [1, \infty)$ and assume $\mu$ is a non-negative, almost-everywhere defined measurable function on the interval $(0,T)$. 
We define the space $\mathrm{H}^2_{q,p,loc}((0,T) \times \fR^d, \mu(t)\mathrm{d}t)$ to be the set of all measurable functions $u$ on $(0,T) \times \fR^d$ that exhibit finite weighted integrability up to their second-order spatial weak derivatives.
Explicitly, a function $u$ belongs to this space if, for every truncation time $T' \in (0,T)$, its corresponding norm is finite:
\begin{align*}
&\|u\|_{\mathrm{H}^2_{q,p}((0,T') \times \fR^d, \mu(t)\mathrm{d}t)} \\
&:= \left[\int_0^{T'} \left(\|u(t,\cdot)\|^q_{\mathrm{L}_p(\fR^d)} + \|u_x(t,\cdot)\|^q_{\mathrm{L}_p(\fR^d)} + \|u_{xx}(t,\cdot)\|^q_{\mathrm{L}_p(\fR^d)}\right) \mu(t) \, \mathrm{d}t\right]^{1/q} < \infty.
\end{align*}
In this notation, $u_x(t,\cdot)$ and $u_{xx}(t,\cdot)$ denote the spatial gradient vector and the spatial Hessian matrix, respectively, which are assembled from their Sobolev partial derivatives.
\end{definition}

We now establish a maximal $\mathrm{L}_p$-regularity estimate for equation \eqref{main eqn 1}.

\begin{theorem}[Regularity estimates]
									\label{main thm 5}
Let $p,q \in (1,\infty)$, $f \in \mathrm{L}_{1,p,loc}\left( (0,T) \times \fR^d\right)$, and $w \in \mathrm{A}_q(\fR)$.
Suppose that Assumption \ref{main as 1},  Assumption \ref{main as 2}, and Assumption \ref{main as 4} hold.
Additionally, assume that for all $T' \in (0,T)$,
\begin{align*}
\int_0^{T'} \mu_{a,b,c}(t) \int_0^t  \|f(s,\cdot)\|_{\mathrm{L}_p(\fR^d)}\mathrm{d}s \mathrm{d}t < \infty
\end{align*}
and
\begin{align}
										\notag
&\int_0^{T'}  \|f(t,\cdot)\|_{\mathrm{L}_p(\fR^d)}^q\mathrm{e}^{-q\int_0^tc(s)\mathrm{d}s}  w(\alpha(t)) (\lambda(t))^{1-q}\mathrm{d}t  \\
										\label{20240912 50}
&:= \lim_{\delta \downarrow 0}\int_0^{T'}\|f(t,\cdot)\|_{\mathrm{L}_p(\fR^d)}^q\mathrm{e}^{-q\int_0^tc(s)\mathrm{d}s}  w(\alpha(t)+\delta t) (\lambda(t) +\delta)^{1-q}\mathrm{d}t  < \infty,
\end{align}
where $\lambda(t)$ is a non-negative locally integrable function on $[0,T)$ such that
\begin{align*}
\lambda(t)|\xi|^2 \leq a^{ij}(t)\xi^i\xi^j \quad \text{a.e.}~ t \in (0,T)~\text{and}~ \xi \in \fR^d
\end{align*}
and
$$
\alpha(t):=\int_0^{t} \lambda(s) \mathrm{d}s < \infty \quad \forall t \in (0,T).
$$
Then the unique weak solution $u$ to equation \eqref{main eqn 1}—as established in Theorem \ref{main thm 3}— belongs to the intersection of the following four classes:
$\mathrm{AC}_{0,loc}\left([0,T) ; \mathrm{L}_p(\fR^d)  \right)$, 
$$
\mathrm{L}_{1,p,loc}\left( (0,T) \times \fR^d, \mu_{a,b,c}(t)\mathrm{d}t\right),  
$$
$$
\mathrm{L}_{\infty,p,loc}\left( (0,T) \times \fR^d,  \mathrm{e}^{-\int_0^t c(s)\mathrm{d}s}\mathrm{d}t\right),
$$
and
$$
\mathrm{H}^2_{q,p,loc}\left( (0,T) \times \fR^d, \mu_{c,\lambda}(t)\mathrm{d}t\right),
$$
where 
\begin{align*}
\mu_{c,\lambda}(t)
=\mathrm{e}^{-q\int_0^tc(s)\mathrm{d}s}  w(\alpha(t)) \lambda(t).
\end{align*}
Moreover, the solution $u$ satisfies the following estimates that for any $T' \in (0,T)$,
\begin{align}
										\label{main est 1-1}
\sup_{t \in [0,T']} \left(\mathrm{e}^{-\int_0^tc(s)\mathrm{d}s}\|u(t,\cdot)\|_{\mathrm{L}_p(\fR^d)} \right)
\leq \|f\|_{\mathrm{L}_{1,p}\left((0,T') \times \fR^d, \mathrm{e}^{-\int_0^tc(s)\mathrm{d}s} \mathrm{d}t\right)},
\end{align}
\begin{align}
										\label{main est 1-2}
\|u\|^q_{\mathrm{L}_{q,p}\left((0,T') \times \fR^d, \mu_{a,b,c}(t)\mathrm{d}t \right)} 
\leq \mathrm{e}^{\int_0^{T'} |c(t)|\mathrm{d}t} \|f\|^q_{\mathrm{L}_{q,p}\left((0,T') \times \fR^d, \mu_{a,b,c}(t)\mathrm{d}t \right)},
\end{align}
\begin{align}
										\notag
&\|u\|^q_{\mathrm{L}_{q,p}\left((0,T') \times \fR^d, \mu_{c,\lambda}(t)\mathrm{d}t \right)} 
\\
										\label{main est 2-1}
&\leq [w]_{\mathrm{A}_p(\fR)}[\alpha(T')]^q
\int_0^{T'}  \|f(t,\cdot)\|_{\mathrm{L}_p(\fR^d)}^q\mathrm{e}^{-q\int_0^tc(s)\mathrm{d}s}  w(\alpha(t)) (\lambda(t))^{1-q}\mathrm{d}t,
\end{align}
\begin{align}
										\notag
&\|u_x\|^q_{\mathrm{L}_{q,p}\left((0,T') \times \fR^d, \mu_{c,\lambda}(t)\mathrm{d}t \right)}  \\
										\label{main est 2-2}
&\lesssim_{T,d,p,q,w}
\int_0^{T'}  \|f(t,\cdot)\|_{\mathrm{L}_p(\fR^d)}^q\mathrm{e}^{-q\int_0^tc(s)\mathrm{d}s}  w(\alpha(t)) (\lambda(t))^{1-q}\mathrm{d}t,
\end{align}
and
\begin{align}
										\notag
&\|u_{xx}\|^q_{\mathrm{L}_{q,p}\left((0,T') \times \fR^d, \mu_{c,\lambda}(t)\mathrm{d}t \right)} \\
										\label{main est 2}
&\lesssim_{d,p,q,w}
\int_0^{T'}  \|f(t,\cdot)\|_{\mathrm{L}_p(\fR^d)}^q\mathrm{e}^{-q\int_0^tc(s)\mathrm{d}s}  w(\alpha(t)) (\lambda(t))^{1-q}\mathrm{d}t.
\end{align}
\end{theorem}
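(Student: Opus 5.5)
The plan is to reduce everything to a non-degenerate, non-singular model problem by successive transformations, and then apply a known weighted maximal regularity result for the heat equation with time-only coefficients.

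\textbf{Step 1: the first two estimates.} The membership $u \in \mathrm{AC}_{0,loc}\cap \mathrm{L}_{1,p,loc}(\mu_{a,b,c}\mathrm{d}t)\cap \mathrm{L}_{\infty,p,loc}$ follows directly from Theorem \ref{main thm 4}. So does \eqref{main est 1-1}, since the supremum of $\int_0^t \mathrm{e}^{-\int_0^s c}\|f(s)\|_{L_p}\,\mathrm{d}s$ is its value at $T'$. For \eqref{main est 1-2}, I will start from the pointwise bound of Theorem \ref{main thm 3}, $\|u(t)\|_{L_p}\le \int_0^t \mathrm{e}^{\int_s^t c}\|f(s)\|_{L_p}\,\mathrm{d}s$, and combine it with Theorem \ref{main thm 4} in the $\mathrm{L}_q$ version via Jensen/H\"older. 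This is routine.

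\textbf{Step 2: removing $c$ and $b$, then changing time.} I set $v(t,x)=\mathrm{e}^{-\int_0^t c}\,u\bigl(t,x-\int_0^t b(s)\,\mathrm{d}s\bigr)$. This translation is well defined because Assumption \ref{main as 4} gives $\int_0^t|c|<\infty$. For $b$, I would only use the translation on compact subintervals $[\varepsilon,T']$ and exploit translation invariance of $L_p$ norms, so that no integrability of $b$ at $0$ is needed. Then $v$ is a weak solution of $v_t=a^{ij}v_{x^ix^j}+\tilde f$, where $\tilde f=\mathrm{e}^{-\int_0^t c}f(\cdot,x-B(t))$, and the $L_p$ norms of $v$ and its derivatives coincide with the weighted ones of $u$.

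Next I perturb $\lambda$ to $\lambda_\delta=\lambda+\delta$; this matches the limit definition in \eqref{20240912 50}. I split $a^{ij}=\lambda_\delta\delta^{ij}+\tilde a^{ij}_\delta$, where $\tilde a_\delta\ge0$ on $\{\lambda>0\}$ (I need care where $\lambda=0$). The degenerate remainder $\tilde a$ is handled by the standard trick: the solution is the heat-type solution of the $\lambda\Delta$ equation convolved with a Gaussian whose covariance is $\int_s^t\tilde a$, a probability measure. Minkowski's inequality then lets us drop it in $L_p$ norms of $v_{xx}$. I will make this rigorous at the level of the representation formula from the It\^o construction of Theorem \ref{main thm 1}, or via Fourier multipliers.

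Finally I change time by $\tau=\alpha_\delta(t)$, setting $V(\tau)=v(\alpha_\delta^{-1}(\tau))$. This turns the equation into $V_\tau=\Delta V+\tilde f/\lambda_\delta$.

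\textbf{Step 3: weighted maximal regularity.} For the heat equation with zero initial data, I apply the known $\mathrm{L}_q(w\,\mathrm{d}\tau;\mathrm{L}_p)$ maximal regularity for $w\in \mathrm{A}_q$, e.g.\ Krylov or Dong--Kim mixed-norm weighted estimates. This gives
$$
\int\|V_{xx}\|^q w(\tau)\,\mathrm{d}\tau\lesssim\int\|\tilde f/\lambda_\delta\|^q w(\tau)\,\mathrm{d}\tau .
$$
Changing variables back with $\mathrm{d}\tau=\lambda_\delta\,\mathrm{d}t$ produces exactly the weights $w(\alpha_\delta)\lambda_\delta$ and $w(\alpha_\delta)\lambda_\delta^{1-q}$. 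Letting $\delta\downarrow0$, together with Fatou on the left-hand side, yields \eqref{main est 2}. The estimate \eqref{main est 2-1} comes from Hardy/H\"older: $\|V(\tau)\|\le\int_0^\tau\|\tilde f/\lambda\|$, combined with the $\mathrm{A}_q$ condition on $[0,\alpha(T')]$. The estimate \eqref{main est 2-2} then follows by interpolating between \eqref{main est 2-1} and \eqref{main est 2}, which explains the dependence on $T$.

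\textbf{Main obstacle.} I expect the hardest step to be the degenerate part: justifying that $u$ coincides with the representation to which the convolution and time-change arguments apply, when $\lambda$ may vanish on sets of positive measure and $a$, $b$ blow up at $0$. My plan is to verify this identification through uniqueness (Theorem \ref{main thm 3}) on smooth, compactly supported approximations of $f$. I would then pass to the limit using the a priori bound and closedness of weak derivatives.
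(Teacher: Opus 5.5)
Your route to \eqref{main est 2} is genuinely different from the paper's. The paper first makes $b$ and $a$ integrable on $[0,T)$ by truncation: componentwise for $b$, and spectrally for $a$ via $A_M=Q\Lambda_MQ^{\top}$ with a measurable eigenvector selection. It then quotes the degenerate, time-measurable estimate [KID 2024, Theorem 2.5] for the truncated problems, and removes the truncation (and a mollification of $f$) by $M$-uniform bounds and weak compactness. You instead work directly on the representation formula. You remove $c$ by the factor $\mathrm{e}^{-\int_0^tc}$, $b$ by a deterministic shift, and the excess $a-\lambda I$ by a random shift, and a time change then leaves the heat equation with an $\mathrm{A}_q$ weight in time. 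This is viable and buys a lot. It needs no truncation, no eigenvector selection and no compactness limit, and only the classical weighted heat estimate is cited. It also shows transparently why blow-up of $a,b$ at $t=0$ is harmless: only increments over $(s,t)$ with $s>0$ ever occur. However, two steps fail as you wrote them.

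\textbf{The $\delta$-split.} $a-(\lambda+\delta)I$ is not nonnegative wherever the smallest eigenvalue of $a$ is below $\lambda+\delta$, which is everywhere if $\lambda$ is that eigenvalue. So ``$\tilde a_\delta\ge0$ on $\{\lambda>0\}$'' is false. With the valid split $a=\lambda I+(a-\lambda I)$, the time change $\tau=\alpha(t)+\delta t$ turns $v_t=\lambda\Delta v+g$ into $V_\tau=\frac{\lambda}{\lambda+\delta}\Delta V+\dots$, which is not the heat equation. Perturb the operator instead. Solve with $a+\delta I$: its lower bound $\lambda+\delta$ leaves the remainder $a-\lambda I\ge0$ and produces exactly the weights in \eqref{20240912 50}. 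Then let $\delta\downarrow0$, using $u^\delta(t)\to u(t)$ in $\mathrm{L}_p$, which follows by dominated convergence in the representation.

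\textbf{Dropping the degenerate part.} The Gaussian for $a-\lambda I$ has covariance $\int_s^t(a-\lambda I)$, which depends on $s$. So it cannot be pulled out of the $s$-integral as a single convolution. Minkowski inside the $s$-integral would destroy the cancellation, since $\|D^2G_{\alpha(t)-\alpha(s)}\|_{\mathrm{L}_1}\sim(\alpha(t)-\alpha(s))^{-1}$ is not $\mathrm{d}\alpha(s)$-integrable. What works is to realize the increment as $Y_t-Y_s$ with $Y_t=\int_{t_0}^t\sqrt{2(a-\lambda I)(r)}\,\mathrm{dB}_r$, anchored at an interior time $t_0$ so that $\int_0a=\infty$ is allowed. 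Put $-Y_s$ into the source and $+Y_t$ on the output. This gives $v_{xx}(t,\cdot)=\bE[U^\omega_{xx}(t,\cdot+Y_t)]$, where $U^\omega$ solves the $\lambda\Delta$-problem with source $\tilde f(s,\cdot-Y_s(\omega))$, whose $\mathrm{L}_p$-norm equals $\|\tilde f(s,\cdot)\|_{\mathrm{L}_p}$ for every $s$. Then apply Minkowski and Jensen in $\omega$. Anchoring the drift shift the same way, $x-\int_{t_0}^tb$, handles $b$. Your alternative of restricting to $[\varepsilon,T']$ leaves nonzero data at $t=\varepsilon$, which the zero-data estimate does not control. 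The ``main obstacle'' you name is not one: the solution of Theorem \ref{main thm 3} is the It\^o formula of Theorem \ref{stochastic weak existence}. Fubini in $(s,\omega)$ needs only $\int_0^t\|f(s,\cdot)\|_{\mathrm{L}_p}\mathrm{d}s<\infty$ and $\int_0^t|c|<\infty$.

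\textbf{The estimate \eqref{main est 1-2}.} This is not routine. H\"older applied to $\|u(t,\cdot)\|_{\mathrm{L}_p}\le\int_0^t\mathrm{e}^{\int_s^tc}\|f(s,\cdot)\|_{\mathrm{L}_p}\mathrm{d}s$ controls $\int\mu_{a,b,c}(t)\bigl(\int_0^t\|f\|_{\mathrm{L}_p}\bigr)^q\mathrm{d}t$, never $\int\mu_{a,b,c}\|f\|^q_{\mathrm{L}_p}$. In fact the inequality as written is false. Take $d=1$, $b=c=0$, $w\equiv1$, $a=\varepsilon$ on $(0,\tfrac12)$, $a=1$ on $(\tfrac12,1)$, $f=1_{(0,1/2)}(t)g(x)$ and $T'=\tfrac34$. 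All hypotheses hold. The left side stays bounded below as $\varepsilon\downarrow0$, because $u(\tfrac12,\cdot)\to\tfrac12g$ in $\mathrm{L}_p$ and $\mu_{a,b,c}=1$ on $(\tfrac12,\tfrac34)$. The right side equals $\tfrac{\varepsilon}{2}\|g\|^q_{\mathrm{L}_p}$. What Theorem \ref{main thm 4} actually gives is $\int_0^{T'}\mu_{a,b,c}\|u\|_{\mathrm{L}_p}\mathrm{d}t\le\mathrm{e}^{\int_0^{T'}|c|}\int_0^{T'}\mu_{a,b,c}(t)\int_0^t\|f(s,\cdot)\|_{\mathrm{L}_p}\mathrm{d}s\,\mathrm{d}t$.
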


The proof of Theorem \ref{main thm 5} is also provided in Section \ref{pf main thm}.
\vspace{2mm}

\begin{remark}
Generally, the integral in Theorem \ref{main thm 5} should not be viewed as a standard Lebesgue integral. Because $\lambda(t)$ may vanish on a set of positive measure, the expression must be interpreted as an improper integral, as formally defined in \eqref{20240912 50}. 
This distinction explains why we provide the integral in its expanded form within the estimates rather than using the shorthand norm notation associated with weighted $\mathrm{L}_{q,p}$-spaces; the latter typically presumes a standard Lebesgue measure that might not be applicable here.
\end{remark}

To demonstrate the scope and generality of our results, we present two primary examples. 
These cases focus on the leading coefficients $a^{ij}(t)$ and serve as straightforward yet powerful illustrations of the diverse types of singularities accommodated by our framework.
\begin{example}
									\label{exam 1}
A straightforward illustration is provided by the leading coefficients $a^{ij}(t) = \left(t^{\beta_1} + (T-t)^{\beta_2} \right)\delta^{ij}$ for $\beta_1, \beta_2 \in \fR$. In this expression, $\delta^{ij}$ represents the Kronecker delta, which is equal to $1$ when $i = j$ and $0$ otherwise.
\end{example}

\begin{example}
									\label{exam 2}
It is straightforward to verify that our coefficients $a^{ij}(t)$ may exhibit exponential growth, as we impose no integrability constraints at the initial or terminal times. 
For example, the coefficients defined by
$$
a^{ij}(t) = \left(\exp\left(\mathrm{e}^{1/t}\right) + \exp\left(\mathrm{e}^{1/(T-t)}\right)\right)\delta^{ij}
$$
are consistent with the requirements of Assumption \ref{main as 1}.
\end{example}

Our analysis will further explore the existence and properties of strong solutions for \eqref{main eqn 1}. 
The specific definition of a strong solution used in this study is provided below.

\begin{definition}[Strong solution]
										\label{def strong sol}
Let $f$ be a locally integrable function on $[0,T) \times \fR^d$ and let $u$ be a weak solution to \eqref{main eqn 1}. 
Suppose that for almost every $t \in (0,T)$ and all $i,j \in \{1,\ldots,d\}$, $u(t,\cdot)$ possesses the spatial Sobolev derivatives $u_{x^ix^j}(t,\cdot)$ and $u_{x^i}(t,\cdot)$. 
We then say that $u$ is a strong solution to \eqref{main eqn 1} if, for every $t \in (0,T)$, the following integral identity holds for almost every $x \in \fR^d$:
\begin{align}
										\notag
&u(t,x)  \\
										\label{20240912 01}
&= \int_0^t \left(a^{ij}(s)u_{x^ix^j}(s,x)  +b^i(s) u_{x^i}(s,x) + c(s)u(s,x) + f(s,x) \right) \mathrm{d}s.
\end{align}
\end{definition}

\begin{remark}
									\label{strong weak sol}
According to Definition \ref{def strong sol}, a strong solution $u$ to \eqref{main eqn 1} must also be a weak solution. 
While one might assume that any strong solution naturally satisfies the weak formulation without requiring explicit enforcement, proving this directly from the integral identity \eqref{20240912 01} relies on an additional integrability assumption:
\begin{align}
										\label{20250816 01}
\int_0^t \int_{|x|<r} \left(|a^{ij}(s)| |u_{x^ix^j}(s,x)|  +|b^i(s)| |u_{x^i}(s,x)| + |c(s)| |u(s,x)|  \right) \mathrm{d}x \mathrm{d}s < \infty,
\end{align}
When this holds for every $t \in (0,T)$ and $r > 0$, Fubini's theorem justifies that $u$ is indeed a weak solution.
 In our framework, however, the presence of significant singularities means this local integrability cannot be guaranteed. 
Consequently, to ensure validity, we must explicitly include the weak solution property as a foundational requirement in the definition of a strong solution.

\end{remark}

\begin{theorem}[Well-posedness of a strong solution]
									\label{main thm 6}
Let $p,q \in (1,\infty)$, $f \in \mathrm{L}_{1,p,loc}\left( (0,T) \times \fR^d\right)$, and $w \in \mathrm{A}_q(\fR)$.
Suppose that Assumption \ref{main as 1},  Assumption \ref{main as 2}, and Assumption \ref{main as 4} hold.
Additionally, assume that for all $T' \in (0,T)$,
\begin{align}
									\notag
&\int_0^{T'} \mu_{a,b,c}(t)^{\frac{q}{q-1}} \left(w(\alpha(t)) \lambda(t) \right)^{-1/(q-1)} \mathrm{d}t \\
									\label{20240912 60}
&:=\lim_{\delta \downarrow 0}\int_0^{T'} \mu_{a,b,c}(t)^{\frac{q}{q-1}}
\left(w(\alpha(t)+\delta t) (\lambda(t) +\delta) \right)^{-1/(q-1)} \mathrm{d}t 
< \infty,
\end{align}
\begin{align*}
\int_0^{T'} \mu_{a,b,c}(t) \int_0^t  \|f(s,\cdot)\|_{\mathrm{L}_p(\fR^d)}\mathrm{d}s \mathrm{d}t < \infty,
\end{align*}
and
\begin{align*}
&\int_0^{T'}  \|f(t,\cdot)\|_{\mathrm{L}_p(\fR^d)}^q\mathrm{e}^{-q\int_0^tc(s)\mathrm{d}s}  w(\alpha(t)) (\lambda(t))^{1-q}\mathrm{d}t  \\
&:= \lim_{\delta \downarrow 0}\int_0^{T'}  \|f(t,\cdot)\|_{\mathrm{L}_p(\fR^d)}^q\mathrm{e}^{-q\int_0^tc(s)\mathrm{d}s}  w(\alpha(t)+\delta t) (\lambda(t) +\delta)^{1-q}\mathrm{d}t  < \infty,
\end{align*}
where $\lambda(t)$ is a non-negative locally integrable function on $[0,T)$ such that
\begin{align*}
\lambda(t)|\xi|^2 \leq a^{ij}(t)\xi^i\xi^j \quad \text{a.e.}~ t \in (0,T)~\text{and}~ \xi \in \fR^d
\end{align*}
and
$$
\alpha(t):=\int_0^{t} \lambda(t) \mathrm{d}t < \infty  \quad \forall t \in (0,T).
$$
Then there exists a strong solution $u$ to \eqref{main eqn 1} in the intersection of the following four classes:
$\mathrm{AC}_{0,loc}\left([0,T) ; \mathrm{L}_p(\fR^d)  \right)$, 
$$
\mathrm{L}_{1,p,loc}\left( (0,T) \times \fR^d, \mu_{a,b,c}(t)\mathrm{d}t\right),  
$$
$$
\mathrm{L}_{\infty,p,loc}\left( (0,T) \times \fR^d,  \mathrm{e}^{-\int_0^t c(s)\mathrm{d}s}\mathrm{d}t\right),
$$
and
$$
\mathrm{H}^2_{q,p,loc}\left( (0,T) \times \fR^d, \mu_{c,\lambda}(t)\mathrm{d}t\right)
$$
where 
\begin{align*}
\mu_{c,\lambda}(t)
=\mathrm{e}^{-q\int_0^tc(s)\mathrm{d}s}  w(\alpha(t)) \lambda(t).
\end{align*}
Moreover, the solution $u$ satisfies the following estimates that for any $T' \in (0,T)$,
\begin{align}
									\label{20240914 31}
\sup_{t \in [0,T']} \left(\mathrm{e}^{-\int_0^tc(s)\mathrm{d}s}\|u(t,\cdot)\|_{\mathrm{L}_p(\fR^d)} \right)
\leq \|f\|_{\mathrm{L}_{1,p}\left((0,T') \times \fR^d, \mathrm{e}^{-\int_0^tc(s)\mathrm{d}s} \mathrm{d}t\right)},
\end{align}
\begin{align*}
\|u\|_{\mathrm{L}_{q,p}\left((0,T') \times \fR^d, \mu_{a,b,c}(t)\mathrm{d}t \right)} 
\leq \mathrm{e}^{\int_0^{T'} |c(t)|\mathrm{d}t} \|f\|_{\mathrm{L}_{q,p}\left((0,T') \times \fR^d, \mu_{a,b,c}(t)\mathrm{d}t \right)},
\end{align*}
\
\begin{align*}
&\|u_t\|^q_{\mathrm{L}_{q,p}\left((0,T') \times \fR^d, \mu_{c,\lambda}(t)\mathrm{d}t \right)} +\|u\|^q_{\mathrm{H}^2_{q,p}\left((0,T') \times \fR^d, \mu_{c,\lambda}(t)\mathrm{d}t \right)}\\
&\lesssim_{T,d,p,q,w}
\int_0^{T'}  \|f(t,\cdot)\|_{\mathrm{L}_p(\fR^d)}^q\mathrm{e}^{-q\int_0^tc(s)\mathrm{d}s}  w(\alpha(t)) (\lambda(t))^{1-q}\mathrm{d}t,
\end{align*}
and
\begin{align}
										\notag
&\|u\|_{\mathrm{L}_{1,p}\left((0,T') \times \fR^d \right)}\\
										\notag
&\lesssim_{d,p,q,w}
\int_0^{T'} \mu_{a,b,c}(t)^{\frac{q}{q-1}} \left(w(\alpha(t)) \lambda(t) \right)^{-1/(q-1)} \mathrm{d}t \\
										\label{20240914 20}
&\quad \times \int_0^{T'}  \|f(t,\cdot)\|_{\mathrm{L}_p(\fR^d)}^q\mathrm{e}^{-q\int_0^tc(s)\mathrm{d}s}  w(\alpha(t)) (\lambda(t))^{1-q}\mathrm{d}t.
\end{align}
\end{theorem}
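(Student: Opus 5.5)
Take $u$ to be the weak solution of Theorem \ref{main thm 5}. It already lies in $\mathrm{AC}_{0,loc}([0,T);\mathrm{L}_p)$, $\mathrm{L}_{1,p,loc}(\mu_{a,b,c}\,\mathrm{d}t)$, $\mathrm{L}_{\infty,p,loc}(\mathrm{e}^{-\int_0^t c}\mathrm{d}t)$ and $\mathrm{H}^2_{q,p,loc}(\mu_{c,\lambda}\mathrm{d}t)$. It also satisfies \eqref{20240914 31}, the $\mu_{a,b,c}$-weighted bound, and the weighted bounds \eqref{main est 2-1}, \eqref{main est 2-2}, \eqref{main est 2} for $u,u_x,u_{xx}$. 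What remains is threefold. First, show that the pointwise identity \eqref{20240912 01} holds, i.e.\ that $u$ is strong. Second, estimate $u_t$. Third, prove \eqref{20240914 20}.

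The key new ingredient is the Hölder bound coming from \eqref{20240912 60}. Write $\mu_{a,b,c}=\mu_{a,b,c}(\mathrm{e}^{-q\int_0^t c}w(\alpha)\lambda)^{-1/q}\cdot(\mathrm{e}^{-q\int_0^t c}w(\alpha)\lambda)^{1/q}$. Hölder in time with exponents $q/(q-1)$ and $q$ gives
\begin{align*}
\int_0^{T'}\mu_{a,b,c}(t)\big(\|u_{xx}\|_{\mathrm{L}_p}+\|u_x\|_{\mathrm{L}_p}+\|u\|_{\mathrm{L}_p}\big)\mathrm{d}t \lesssim \Big(\int_0^{T'}\mu_{a,b,c}^{\frac{q}{q-1}}(w(\alpha)\lambda)^{-\frac{1}{q-1}}\mathrm{d}t\Big)^{\frac{q-1}{q}}\|u\|_{\mathrm{H}^2_{q,p}((0,T'),\mu_{c,\lambda})},
\end{align*}
where the factor $\mathrm{e}^{\int_0^{T'}|c|}$ is absorbed using Assumption \ref{main as 4}. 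Combined with \eqref{main est 2}, this yields the mixed estimate \eqref{20240914 20} with the stated product form, up to powers and the use of Young's inequality. The degenerate set where $\lambda=0$ is handled by working with $\lambda+\delta$ and $\alpha(t)+\delta t$ and passing $\delta\downarrow0$ by monotone convergence, exactly as in the definitions \eqref{20240912 50}, \eqref{20240912 60}.

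Once this is available, the integrability condition \eqref{20250816 01} holds for every $t<T$ and $r>0$, since the spatial localization costs only $|B_r|^{1-1/p}$. Starting from the weak formulation \eqref{20240909 70}, integrate by parts in $x$ using the Sobolev derivatives of $u(s,\cdot)$. Fubini then gives $\int\varphi(x)\big[u(t,x)-\int_0^t(a^{ij}u_{x^ix^j}+b^iu_{x^i}+cu+f)\,\mathrm{d}s\big]\mathrm{d}x=0$ for all $\varphi\in C_c^\infty$, which is \eqref{20240912 01} for a.e.\ $x$. In particular $u_t=a^{ij}u_{x^ix^j}+b^iu_{x^i}+cu+f$ a.e., consistent with the absolute continuity from $\mathrm{AC}_{0,loc}$ (Remark \ref{absolute remark}).

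The $u_t$ estimate is the main obstacle. The bound $|a^{ij}|\,\|u_{xx}\|$ is not controlled by $\lambda$-weighted norms when $a$ is unbounded relative to $\lambda$. I would therefore not bound $u_t$ termwise. Instead I would use the representation via the change of time $\tau=\int\!\!\lambda$ and the Fourier multiplier/Itô formula from the existence proof. There, $a^{ij}u_{x^ix^j}$ equals the Hessian of a solution of a heat equation after rescaling by the full matrix, so its weighted $\mathrm{L}_{q,p}$ norm is controlled like \eqref{main est 2}, with $\lambda^{1-q}$ on the data side. If that fails, I would restrict the $u_t$ bound to the structure given by the equation and accept the $\mu_{a,b,c}$-dependent constant. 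The remaining terms $b^iu_{x^i}$ and $cu$ are handled by the lower-order estimates \eqref{main est 2-2} and \eqref{main est 2-1}, with a $T$-dependent constant.
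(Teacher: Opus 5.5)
Your proof that $u$ is a strong solution is correct, but it takes a different route from the paper's.

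\textbf{The paper's route.} It mollifies: by Remark~\ref{mollifier remark} each $u^{(\varepsilon)}$ is a classical solution with data $f^{(\varepsilon)}$. It then passes to the limit along $\varepsilon_n\downarrow0$, controlling the operator term by generalized Minkowski, H\"older with \eqref{20240912 60}, and Theorem~\ref{main thm 5} applied to the differences $u^{(\varepsilon_n)}-u^{(\varepsilon_m)}$.

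\textbf{Your route.} You use the Sobolev derivatives of $u$ itself, which Theorem~\ref{main thm 5} already supplies. You integrate by parts directly in \eqref{20240909 70} and justify Fubini with the same H\"older bound, which gives \eqref{20250816 01}.

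\textbf{Comparison.} The key inequality is the same. Your version avoids the Cauchy-sequence limit passage. The paper's version obtains the identity as a limit of classical ones and never integrates by parts against $u$.

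\textbf{The set $\{\lambda=0\}$.} Do not attribute this to ``monotone convergence in $\delta$''. That is unavailable: $w(\alpha(t)+\delta t)$ is not monotone in $\delta$, and Theorem~\ref{main thm 5} weights $u_{xx}$ by $\lambda$, not $\lambda+\delta$. State instead that $u_{xx}(s,\cdot)$ is only guaranteed where $\lambda(s)>0$, and that this suffices because \eqref{20240912 60} forces $\mu_{a,b,c}=0$ a.e.\ on $\{\lambda=0\}$. To see this, take $F\subset\{\lambda=0\}$ with $|F|>0$ and $\mu_{a,b,c}\geq m>0$ on $F$. The map $\phi_\delta(t)=\alpha(t)+\delta t$ has $\phi_\delta'=\delta$ on $F$. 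H\"older on $\phi_\delta(F)$, a set of measure $\delta|F|$, bounds the $\delta$-integral below by $m^{q/(q-1)}|F|^{q/(q-1)}\big(\int_{\phi_\delta(F)}w\big)^{-1/(q-1)}$, which tends to $\infty$.

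\textbf{The gap: the weighted bound on $u_t$.} The bound on $\|u_t\|_{\mathrm{L}_{q,p}((0,T')\times\fR^d,\mu_{c,\lambda}\mathrm{d}t)}$ is not proved, and your proposed mechanism cannot prove it, because the bound fails in the stated generality.
\begin{itemize}
\item Take $d=1$, $b=c=0$, $w\equiv1$ and $a\equiv\lambda\equiv M$. Then $\alpha(t)=Mt$, $\mu_{c,\lambda}=M$, and \eqref{20240912 60} equals $MT'$.
\item Let $f=Mh\,\mathbf{1}_{(0,1/M)}(t)$ with $h$ a Gaussian. For $t<1/M$ one computes $u_t(t)=M\mathrm{e}^{Mt\Delta}h$.
\item Hence $\int_0^{T'}\|u_t\|_{\mathrm{L}_p}^q\mu_{c,\lambda}\,\mathrm{d}t\gtrsim M^q\|h\|_{\mathrm{L}_p}^q$, while the right-hand side equals $\|h\|_{\mathrm{L}_p}^q$. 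No constant depending only on $T,d,p,q,w$ survives $M\to\infty$.
\end{itemize}
This example is isotropic, so the time change is exact there. Writing $v(\alpha(t),x)=u(t,x)$ gives $u_t=\lambda v_\tau$. So the time change controls $u_t$ with the weight $\mathrm{e}^{-q\int_0^t c}\,w(\alpha)\lambda^{1-q}$, which differs from $\mu_{c,\lambda}$ by the factor $\lambda^q$. The same objection applies to $b^iu_{x^i}$ and $cu$: \eqref{main est 2-2} and \eqref{main est 2-1} control $u_x$ and $u$ in that space, not their products with unbounded coefficients.

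\textbf{What the paper actually proves about $u_t$.} The paper's proof gives no argument for this weighted bound either. It reduces everything beyond Theorem~\ref{main thm 5} to the strong-solution property and \eqref{20240914 20}. As the remark after the theorem shows, it reads \eqref{20240914 20} as an unweighted $\mathrm{L}_{1,p}$ bound on $u_t$, obtained from $\|u_t\|_{\mathrm{L}_{1,p}}\leq\int_0^{T'}\mu_{a,b,c}\big(\|u_{xx}\|_{\mathrm{L}_p}+\|u_x\|_{\mathrm{L}_p}+\|u\|_{\mathrm{L}_p}\big)\mathrm{d}t+\|f\|_{\mathrm{L}_{1,p}}$ and the H\"older step. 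That is exactly your fallback, and it is what your proposal actually establishes.

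\textbf{The form of \eqref{20240914 20}.} Let $A$ and $B$ be the two integrals on the right of \eqref{20240914 20}. For the operator term, the H\"older step gives $A^{(q-1)/q}B^{1/q}$, times $\mathrm{e}^{\int_0^{T'}|c|}$; it does not give $AB$. Young's inequality cannot convert one into the other. The left side is homogeneous of degree one in $f$, while $AB$ is homogeneous of degree $q$, so the product form cannot hold for all $f$. State the estimate in the homogeneous form.
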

\vspace{2mm}

Similarly, the proof of Theorem \ref{main thm 6} is provided in Section \ref{pf main thm}.

\begin{remark}

Under the hypotheses of Theorem \ref{main thm 6}, it is straightforward to verify that any strong solution $u$ is also a weak solution in the sense of Theorem \ref{main thm 3}. 
Consequently, the uniqueness of the strong solution within the class defined by Theorem \ref{main thm 6} follows directly from Theorem \ref{main thm 3}.

\end{remark}

\begin{remark}
Since the solution $u$ belongs to the class $\mathrm{AC}_{0,loc}([0,T) ; \mathrm{L}_p(\fR^d))$, the term $u_t$ in \eqref{20240914 20} is well-defined as an $\mathrm{L}_p(\fR^d)$-valued Fréchet derivative. 
Furthermore, one can readily verify that $u_t$ coincides with the Sobolev derivative by applying the fundamental theorem of calculus in conjunction with integration by parts.
\end{remark}

\begin{remark}
Condition \eqref{20240912 60} is imposed to ensure the well-posedness of the strong solution in Theorem \ref{main thm 6}. Clearly, \eqref{20240912 60} is satisfied if all coefficients are bounded and $a^{ij}(t)$ is uniformly elliptic; that is, there exists a constant $\kappa \in (0,1)$ such that
$$
\kappa |\xi|^2 \leq a^{ij}(t)\xi^i\xi^j \leq \frac{1}{\kappa}|\xi|^2 \quad \forall \xi \in \fR^d \text{ and } t \in (0,T).
$$
Furthermore, condition \eqref{20240912 60} guarantees the local integrability of the coefficients on $[0,T)$. To see this, let $\delta > 0$ and $T' \in (0,T)$. By applying H\"older's inequality and a straightforward change of variables, we obtain
\begin{align*}
&\int_0^{T'} \mu_{a,b,c}(t)\mathrm{d}t \\
&= \int_0^{T'} \mu_{a,b,c}(t) \left(w(\alpha(t)+\delta t) (\lambda(t)+\delta) \right)^{-1/q} \left(w(\alpha(t)+\delta t) (\lambda(t)+\delta) \right)^{1/q} \mathrm{d}t \\
&\leq \left(\int_0^{T'} \mu_{a,b,c}(t)^{\frac{q}{q-1}} \left(w(\alpha(t)+\delta t) (\lambda(t)+\delta) \right)^{-1/(q-1)} \mathrm{d}t \right)^{\frac{q-1}{q}} \\
&\quad \times \left(\int_0^{\alpha(T')+\delta T'} w(t) \mathrm{d}t \right)^{1/q}.
\end{align*} 
Letting $\delta \downarrow 0$, it follows that for any $T' \in (0,T)$,
$$
\int_0^{T'} \mu_{a,b,c}(t)\mathrm{d}t 
\lesssim \left(\int_0^{T'} \mu_{a,b,c}(t)^{\frac{q}{q-1}} \left(w(\alpha(t)) \lambda(t) \right)^{-1/(q-1)} \mathrm{d}t \right)^{\frac{q-1}{q}}.
$$
Consequently, the local integrability of the coefficients on $[0,T)$, namely that
$$
\int_0^{T'} \mu_{a,b,c}(t)\mathrm{d}t  < \infty,
$$
is a necessary requirement to establish the existence and uniqueness of the strong solution to \eqref{main eqn 1} in Theorem \ref{main thm 6}.
\end{remark}

Although the local integrability of the coefficients on $[0,T)$ is a prerequisite for applying our theorem, our condition is robust enough to capture compelling, non-trivial scenarios well beyond standard uniform ellipticity. We provide an illustrative example below to demonstrate this.

\begin{example}
									\label{exam 233}
Let $-\infty < a_1 \leq a_2 <1$.
We investigate existence and uniqueness of a strong solution to the following simple equation:
\begin{align}
									\notag
&u_t(t,x)=  t^{-a_1}u_{x^1x^1}(t,x)+t^{-a_2}u_{x^2x^2}(t,x)+f(t,x),  \\
									\label{20240913 10}
&u(0,x)=0, \qquad (t,x) \in (0,T) \times \fR^2 
\end{align}
We establish the existence of a unique strong solution to \eqref{20240913 10} in a weighted $\mathrm{L}_{p,q}$-space, provided $f$ satisfies an appropriate condition. 
For the special case where $a_1 = a_2$ and $f$ is smooth, this result is already known (see, \textit{e.g.}, \cite[Theorem 3.1]{KL 2017}).
Recalling Remark \ref{mucken remark} and the notation from Theorem \ref{main thm 6}, we set
$$
A(t) = (a_{ij}(t)) = \begin{bmatrix} t^{-a_1} & 0 \\ 0 & t^{-a_2} \end{bmatrix}.
$$
For all $t \in (0,T)$, we define $\lambda(t) = t^{-a_1}$, $\alpha(t) = \frac{1}{1-a_1} t^{1-a_1}$, $b^i(t) = c(t) = 0$, and the weight $w(t) = t^\beta$ with $-1 < \beta < q-1$. 
Then, for any $T' \in (0,T)$, the integrability condition
$$
\int_0^{T'} \mu_{a,b,c}(t)^{\frac{q}{q-1}} \left(w(\alpha(t)) \lambda(t) \right)^{-1/(q-1)} \mathrm{d}t < \infty
$$
is equivalent to
$$
\int_0^{T'} t^{\frac{-a_2q - (1-a_1)\beta + a_1}{q-1}} \mathrm{d}t < \infty.
$$
Clearly, for any $q \in (1,\infty)$, we can ensure that the exponent satisfies
$$
\frac{-a_2q - (1-a_1)\beta + a_1}{q-1} > -1
$$
by choosing $\beta$ sufficiently close to $-1$. 
Therefore, applying Theorem \ref{main thm 6} with the corresponding condition on $f$, we conclude that there exists a unique strong solution to \eqref{20240913 10} that satisfies all the estimates detailed in the theorem.
\end{example}

\begin{remark}
For the anisotropic degenerate model in Example \ref{exam 233}, Theorem \ref{main thm 6} explicitly dictates the permissible range of the source term $f$ and weight exponent $\beta$. Specifically, for $a_1 \le a_2 < 1$, taking $w(t) = t^\beta$ requires selecting $\beta \in (-1, q-1)$ such that $\frac{-a_2 q - (1-a_1)\beta + a_1}{q-1} > -1$. 
This explicit quantitative trade-off illustrates how the severe temporal degeneracy is precisely balanced by the temporal weight $w(t)$ and the weighted norm of $f$
\end{remark}

\begin{remark}
The non-zero initial value problem associated with \eqref{main eqn 1} is generally ill-posed because the coefficients $a^{ij}(t)$ blow up near the initial time $t=0$. To briefly illustrate this, consider the non-zero initial value problem:
$$
\begin{aligned} &u_t(t,x) = a^{ij}(t) u_{x^ix^i}(t,x), \quad (t,x) \in (0,T) \times \fR^d \\ &u(0,x) = u_0. \end{aligned}
$$
Formally, the solution $u$ can be represented either via convolution with a fundamental solution,$$u(t,x) = \int_{\fR^d} p(t,x-y) u_0(y) \mathrm{d}y,
$$
or through a stochastic representation,
$$
u(t,x) = \bE\left[ u_0\left( x + \int_0^t \sqrt{A(s)} \mathrm{dB}_s \right) \right].
$$
Here, $\mathrm{B}_t$ denotes a $d$-dimensional Brownian motion, $\sqrt{A(s)}$ is the positive semi-definite matrix satisfying $\sqrt{A(s)}\sqrt{A(s)} = A(s) = (a^{ij}(s))$, and the heat kernel is given by
$$
p(t,x) = \frac{1}{(2\pi)^{d/2}} \cF^{-1}\left[ \exp\left( - \xi^i\xi^j \int_0^t a^{ij}(s)\mathrm{d}s \right) \right](x).
$$
Since the coefficients $a^{ij}(t)$ fail to be integrable near the initial time $t=0$, both the deterministic integral $\int_0^t a^{ij}(s)\mathrm{d}s$ and the stochastic integral $\int_0^t \sqrt{A(s)} \mathrm{dB}_s$ generally diverge. 
This phenomenon is illustrated concretely in Examples \ref{exam 1} and \ref{exam 2}.
To provide better intuition for why equation \eqref{main eqn 1} is ill-posed when the initial condition $u_0$ is non-zero, we can look at a simplified scenario. 
For convenience, let's restrict the problem to one spatial dimension ($d=1$) and assume that the coefficient $a(t)$ has a divergent integral over the interval:
$$
\int_0^T a(s) \mathrm{d}s = \infty.
$$
Suppose $u$ is a weak solution to the following initial value problem:
$$
u_t(t,x) = a(t) u_{xx}(t,x)
$$
$$
u(0,x) = u_0(x), \quad \text{where } (t,x) \in (0,T) \times \mathbf{R}.
$$
Assume that the mapping $x \mapsto u(t,x)$ belongs to $L_p(\mathbf{R})$ for some $p \in [1,2]$.
By utilizing mollifiers, we can assume without loss of generality that $u$ is spatially smooth and qualifies as a strong solution. 
This allows us to express the solution in its integral form:
$$
u(t,x) = u_0(x) + \int_0^t a(s)u_{xx}(s,x) \mathrm{d}s.
$$
Applying the spatial Fourier transform to both sides yields a first-order ordinary differential equation with respect to time for each frequency $\xi$:
$$
\partial_t \mathcal{F}[u(t,\cdot)](\xi) = -a(t) \xi^2 \mathcal{F}[u(t,\cdot)](\xi) \quad \text{for all } (t,\xi) \in (0,T) \times \mathbf{R}.
$$
If we solve this ODE starting from an arbitrary base time $t_0 \in (0,T)$, we get
$$
\mathcal{F}[u(t,\cdot)](\xi) = \mathcal{F}[u(t_0,\cdot)](\xi) \exp \left( -|\xi|^2 \int_{t_0}^t a(s) \mathrm{d}s \right) \quad \text{for all } t \in (0,T).
$$
Now, taking the limit as $t$ approaches $0$ from above, we find
$$
\mathcal{F}[u_0](\xi) = \mathcal{F}[u(t_0,\cdot)](\xi) \exp \left( |\xi|^2 \int_0^{t_0} a(s) \mathrm{d}s \right) = \infty.
$$
Because the integral of $a(s)$ diverges, the exponential term blows up to infinity. In order to prevent the initial data $\mathcal{F}[u_0](\xi)$ from evaluating to infinity, it forces the condition $\mathcal{F}[u(t_0,\cdot)](\xi) = 0$.
Since the choice of $t_0$ was arbitrary, the solution must be identically zero ($u = 0$), which consequently means $u_0$ must also be zero. 
Therefore, the problem cannot be well-posed for any non-zero initial condition if the coefficients blow up near the initial time.

However, if the coefficients are integrable, a weighted estimate in a Besov space can still be established (\textit{cf}. \cite{CKL 2025}).

\end{remark}

\mysection{It\^o calculus for existence of a solution}
												\label{ito calculus}

Let $(\Omega, \rF, \mathrm{P})$ be a probability space equipped with a filtration $\{\rF_t\}_{t \geq 0}$ satisfying the usual conditions; that is, each $\rF_t$ is a sub-$\sigma$-algebra of $\rF$, the filtration is right-continuous ($\rF_t = \bigcap_{s>t} \rF_s$), and $\rF_0$ contains all $\mathrm{P}$-null sets. 
We assume the existence of a $d$-dimensional Brownian motion (or Wiener process) $\mathrm{B}_t = (\mathrm{B}^1_t, \ldots, \mathrm{B}^d_t)$ that is adapted to $\{\rF_t\}_{t \geq 0}$, meaning $\mathrm{B}_t$ is $\rF_t$-measurable for every $t \geq 0$.
For a comprehensive background on It\^o calculus, we refer readers to standard texts such as \cite{KS 2014, Krylov 1995, Krylov 2002, RY 1999}. 
However, we emphasize that our current framework does not require the full generality of It\^o integrals with predictable or progressively measurable random integrands.
 Instead, our analysis is strictly confined to It\^o integrals with deterministic integrands.
For any deterministic $d \times d$-matrix-valued function $\sigma(t) = (\sigma^{ij}(t))$ satisfying
$$
\int_0^\infty |\sigma(t)|^2 \mathrm{d}t < \infty,
$$
the It\^o stochastic integral $\int_0^\infty \sigma(t) \mathrm{dB}_t$ is well-defined as an element of $\mathrm{L}_2(\Omega; \fR^d)$ via the It\^o isometry. 
This integral is an $\fR^d$-valued random variable whose $i$-th component is given by the scalar It\^o integral $\sum_{j=1}^d \int_0^\infty \sigma^{ij}(t) \mathrm{dB}^j_t$.
Furthermore, if $\sigma$ is locally square-integrable such that
$$
\int_s^t |\sigma(r)|^2 \mathrm{d}r < \infty \quad \text{for all } 0 < s < t < T,
$$
we define the truncated It\^o integral for $0 < s \leq t < T$ as
$$
\int_s^t \sigma(r) \mathrm{dB}_r := \int_0^\infty \mathbf{1}_{(s,t)}(r) \sigma(r) \mathrm{dB}_r.
$$
By Doob's martingale inequality, this stochastic integral admits a continuous modification. Specifically, for any fixed $s \in (0,T)$ and almost every $\omega \in \Omega$, the sample path mapping $t \in [s, \infty) \mapsto \int_s^t \sigma(r) \mathrm{dB}_r(\omega)$ is a continuous trajectory.
The primary methodological tool in our analysis is It\^o's formula, which requires functions to be twice continuously differentiable. 
Therefore, our strategy is to first establish the definition of a classical solution and apply It\^o's formula directly to it. 
Following standard techniques in the literature, we will then construct our general weak solutions by approximating them with sequences of these classical solutions.

\begin{definition}[Classical solution]
										\label{classic sol}
Let $u$ be a locally integrable function on $[0,T) \times \fR^d$.
We say that $u$ is a classical solution to \eqref{main eqn 1} if for all $(t,x) \in [0,T) \times \fR^d$,
\begin{align}
										\label{classical equal}
u(t,x) = \int_0^t \left(a^{ij}(s) u_{x^ix^j}(s,x) + b^i(s)u_{x^i}(s,x) + c(s)u(s,x)+f(t,x) \right) \mathrm{d}s.
\end{align}
Here $u_{x^ix^j}$ and $u_{x^i}$ are classical derivatives, \textit{i.e.} 
\begin{align*}
u_{x^i}(t,x)= \lim_{h \downarrow 0} \frac{ u(t,x+h e_i) - u(t,x)}{h},
\end{align*}
where $e_i$ is the standard basis vector in $\fR^d$ whose $i$-th coordinate is 1 and the other coordinates are zero.
\end{definition}

\begin{remark}
It is necessary to clarify a subtle distinction between our concept of a classical solution and the conventional definition, particularly regarding differentiability in the temporal variable.
Suppose $u$ is a classical solution to \eqref{main eqn 1} according to our definition. Because \eqref{classical equal} guarantees that $u$ is absolutely continuous with respect to $t$, the Fundamental Theorem of Calculus applies in the temporal variable. 
Consequently, the Lebesgue differentiation theorem ensures that $u$ is differentiable with respect to $t$ almost everywhere on $(0,T)$, yielding
\begin{align}
										\label{20240923 01}
u_t(t,x) = a^{ij}(t) u_{x^ix^j}(t,x) + b^i(t)u_{x^i}(t,x) + c(t)u(t,x)
\end{align}
for all $x \in \fR^d$.
However, without assuming that the coefficients are continuous, we generally cannot expect this equation to hold for every $t \in (0,T)$. 
Because the coefficients are highly general, the right-hand side of \eqref{20240923 01} may not satisfy Darboux's property (the intermediate value property for derivatives). 
By Darboux's theorem, this implies that no function $u$ can possess an everywhere-defined time derivative that satisfies \eqref{20240923 01} pointwise on all of $(0,T) \times \fR^d$.
Therefore, our formulation in Definition \ref{classic sol} represents the highest level of regularity achievable given the inherent irregularity of the coefficients. 
For this reason, it is thoroughly justified to refer to it as a classical solution to \eqref{main eqn 1}.
\end{remark}
\begin{remark}
										\label{classical sol derivative}
Even when considering a classical solution $u$ to \eqref{main eqn 1}, it is impossible to guarantee that the classical derivatives $u_{x^ix^j}(t,x)$ and $u_{x^i}(t,x)$ exist uniformly for every $t \in (0,T)$ and $x \in \fR^d$. 
This limitation arises mainly because the coefficients $a^{ij}(t)$ and $b^i(t)$ are permitted to vanish on certain subsets of $(0,T)$, which potentially disrupts the regularity of the solution.
\end{remark}

\begin{remark}
									\label{classic weak sol}
Suppose that for all $t \in (0,T)$ and $r \in (0,\infty)$,
\begin{align*}
\int_0^t \int_{|x|<r} \left(|a^{ij}(s)| |u_{x^ix^j}(s,x)|  +|b^i(s)| |u_{x^i}(s,x)| + |c(s)| |u(s,x)|  \right) \mathrm{d}x \mathrm{d}s < \infty.
\end{align*}
Due to the  local integrability, any existing classical derivatives coincide with their corresponding Sobolev derivatives. Consequently, every classical solution $u$ to \eqref{main eqn 1} is automatically a strong solution. Furthermore, as discussed in Remark \ref{strong weak sol}, any such classical solution simultaneously qualifies as a weak solution.
\end{remark}

Recall that the function defined by
\begin{align}
									\label{20240923 10}
u(t,x) := \int_0^t  \mathrm{e}^{\int_s^t c(r)\mathrm{d}r}  \bE\left[f\left(s,x + X_{s,t}\right)  \right] \mathrm{d}s
\end{align}
typically constitutes a classical solution to \eqref{main eqn 1}, where $X_{s,t}=\int_s^tb(r)\mathrm{d}r+\int_s^t \sigma(r) \mathrm{dB}_r$,  $\sigma(r)= \sqrt{2} \sqrt{A(r)}$, $b(t)=\left(b^1(t),\ldots, b^d(t)\right)$, and $A(r)= (a^{ij}(r))$  (\textit{cf.} \cite{Krylov 1995,KK 2018,BKRS 2022}).  

However, our framework allows all coefficients to be unbounded or even non-integrable, a rigorous justification is required to show that  expression \eqref{20240923 10} still provides a classical solution to equation \eqref{main eqn 1}.

\begin{theorem}[Existence of a classical solution]
										\label{classic existence solution}
Assume that Assumptions \ref{main as 1} - \ref{main as 3} are satisfied. 
Furthermore, suppose the leading coefficients satisfy the following local integrability condition for any $0 < s \leq t < T$,
\begin{align}
									\label{20240909 01}
\int_s^t \mathrm{e}^{\int_s^r c(\rho)\mathrm{d}\rho}\max_{i,j}|a^{ij}(r)| \mathrm{d}r < \infty.
\end{align}
Let $f$ be a locally integrable source function on $[0,T) \times \fR^d$ that fulfills the following regularity and weighted integrability criteria:
\begin{enumerate}[(i)]
\item{(Spatial Smoothness)} For every $t \in (0,T)$, the spatial mapping $x \mapsto f(t,x)$ is twice continuously differentiable.
\item{(Zeroth-Order Integrability)} For any truncation time $T' \in (0,T)$,
$$
  \int_0^{T'} \|f(t,\cdot)\|_{\mathrm{L}_\infty(\fR^d)}\mathrm{d}t + \int_0^{T'} |c(t)| \int_0^t \mathrm{e}^{\int_s^t c(r)\mathrm{d}r}\|f(s,\cdot)\|_{\mathrm{L}_\infty(\fR^d)}\mathrm{d}s \, \mathrm{d}t < \infty.
$$
\item{(First-Order Integrability)} 
For any $T' \in (0,T)$, the double integral involving the drift coefficients and the gradient is finite:
$$
  \int_0^{T'} |b^i(t)| \int_0^t \mathrm{e}^{\int_s^t c(r)\mathrm{d}r}\|f_{x^i}(s,\cdot)\|_{\mathrm{L}_\infty(\fR^d)}\mathrm{d}s \, \mathrm{d}t < \infty.
$$
\item{(Pointwise Gradient Bound)}
 The spatial gradient is essentially bounded in time, meaning $\|f_x(t,\cdot)\|_{\mathrm{L}_\infty(\fR^d)} < \infty$ for almost every $t \in (0,T)$.
\item{(Second-Order Integrability)} For any $T' \in (0,T)$, the double integral involving the diffusion coefficients and the Hessian is finite:
$$
  \int_0^{T'} |a^{ij}(t)| \int_0^t \mathrm{e}^{\int_s^t c(r)\mathrm{d}r} \|f_{x^ix^j}(s,\cdot)\|_{\mathrm{L}_\infty(\fR^d)}\mathrm{d}s \, \mathrm{d}t < \infty.
$$
\end{enumerate}
Under these conditions, the function $u$ explicitly defined in \eqref{20240923 10} constitutes a classical solution to equation \eqref{main eqn 1}.

\end{theorem}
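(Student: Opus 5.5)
The plan is to verify the integral identity \eqref{classical equal} directly by writing $u(t,x)=\int_0^t v(s,t,x)\,\mathrm{d}s$, where
\begin{align*}
v(s,t,x):=\mathrm{e}^{\int_s^t c(r)\mathrm{d}r}\,\bE\left[f\left(s,x+X_{s,t}\right)\right],
\end{align*}
and to treat each inner function $t\mapsto v(s,t,x)$ for fixed $s\in(0,T)$. On $[s,T)$ the coefficients are integrable by Assumptions \ref{main as 1}--\ref{main as 3}, so $X_{s,t}$ is a genuine continuous semimartingale in $t$. The quadratic variation $\int_s^t 2A(r)\,\mathrm{d}r$ is finite by \eqref{20240909 01}. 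First, since $f(s,\cdot)\in C^2$ with bounded first derivatives (condition (iv)), and with bounded second derivatives for a.e.\ $s$ wherever they enter, I apply It\^o's formula to $t\mapsto \mathrm{e}^{\int_s^t c}f(s,x+X_{s,t})$ on $[s,t]$. Taking expectations removes the martingale part; its square integrability follows from $\|f_x(s,\cdot)\|_{\mathrm{L}_\infty}<\infty$ and $\int_s^t|\sigma|^2<\infty$. This gives, for $t\ge s$,
\begin{align*}
v(s,t,x)=f(s,x)+\int_s^t \left(a^{ij}(r)v_{x^ix^j}(s,r,x)+b^i(r)v_{x^i}(s,r,x)+c(r)v(s,r,x)\right)\mathrm{d}r ,
\end{align*}
where $v_{x^i}$ and $v_{x^ix^j}$ are obtained by differentiating under $\bE$. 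This is legitimate by dominated convergence, because $f(s,\cdot)$ has bounded continuous derivatives. The derivative bounds are $|D^k_x v(s,r,x)|\le \mathrm{e}^{\int_s^r c}\|D^k_x f(s,\cdot)\|_{\mathrm{L}_\infty}$.

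Second, I integrate this identity in $s\in(0,t)$ and use Fubini to swap the $s$ and $r$ integrals over the triangle $0<s<r<t$. The result is
\begin{align*}
u(t,x)=\int_0^t f(s,x)\,\mathrm{d}s+\int_0^t\int_0^r\left(a^{ij}(r)v_{x^ix^j}+b^iv_{x^i}+cv\right)(s,r,x)\,\mathrm{d}s\,\mathrm{d}r .
\end{align*}
Conditions (ii), (iii) and (v) are exactly the statements that the absolute integrands are finite over the triangle. Indeed, $|v|$, $|v_{x^i}|$ and $|v_{x^ix^j}|$ are dominated by $\mathrm{e}^{\int_s^r c}$ times the $\mathrm{L}_\infty$ norms of $f$, $f_{x^i}$ and $f_{x^ix^j}$. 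The term $\int_0^t f(s,x)\,\mathrm{d}s$ is finite by the first part of (ii).

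Third, I identify $\int_0^r v_{x^ix^j}(s,r,x)\,\mathrm{d}s$ with the classical derivative $u_{x^ix^j}(r,x)$, and likewise for first derivatives. This is differentiation under the $s$-integral, dominated by $\mathrm{e}^{\int_s^r c}\|f_{xx}(s,\cdot)\|_{\mathrm{L}_\infty}$. That dominant is integrable in $s$ for a.e.\ $r$ with $a^{ij}(r)\neq0$, which follows from (v) via Fubini. Where $a^{ij}(r)=0$ the term is absent, in line with Remark \ref{classical sol derivative}. Combining the three steps yields \eqref{classical equal} for every $(t,x)$.

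The main obstacle is this last exchange together with the It\^o step near $s\downarrow0$. The coefficients are non-integrable at $0$, so every limit must be taken for fixed $s>0$ first and only afterwards integrated in $s$. The required domination holds only for almost every $r$, which is why I expect to define the derivatives of $u$ only where they are needed. I would check carefully that the exceptional null sets do not affect the $r$-integral.
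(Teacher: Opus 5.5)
Your proposal follows the paper's proof essentially step for step: It\^o's formula for $\mathrm{e}^{\int_s^t c(r)\mathrm{d}r}f(s,x+X_{s,t})$ at fixed $s>0$, the martingale term removed via (iv), expectations, integration over $s\in(0,t)$ with Fubini on the triangle justified by (ii), (iii), (v), and identification of the inner $s$-integrals with $u$, $u_{x^i}$, $u_{x^ix^j}$; it is correct at the paper's level of rigor. One point that both you and the paper pass over is the following: a classical $u_{x^ix^j}(r,\cdot)$ at an $r$ with $a^{ij}(r)\neq0$ first requires $u_{x^i}(r,\cdot)=\int_0^r v_{x^i}(s,r,\cdot)\,\mathrm{d}s$, i.e.\ $\int_0^r\mathrm{e}^{\int_s^r c(\rho)\mathrm{d}\rho}\|f_{x^i}(s,\cdot)\|_{\mathrm{L}_\infty(\fR^d)}\mathrm{d}s<\infty$, which (iii) gives only where $b^i(r)\neq0$ and (iv) gives not at all; this gap is closed using $|a^{ij}|\le\sqrt{a^{ii}a^{jj}}$ (so $a^{ii}(r)\neq0$), (v) for the pair $(i,i)$, the Taylor bound $\|f_{x^i}(s,\cdot)\|_{\mathrm{L}_\infty}\le2\|f(s,\cdot)\|_{\mathrm{L}_\infty}+\tfrac12\|f_{x^ix^i}(s,\cdot)\|_{\mathrm{L}_\infty}$, and the finiteness of $\int_0^r\mathrm{e}^{\int_s^r c(\rho)\mathrm{d}\rho}\|f(s,\cdot)\|_{\mathrm{L}_\infty}\mathrm{d}s$ for every $r\in(0,T)$, which holds because this quantity is finite either for all $r$ or for none (since $c$ and $s\mapsto\|f(s,\cdot)\|_{\mathrm{L}_\infty}$ are integrable away from $0$) and (ii) excludes the latter.
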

\begin{proof}
We divide proof into several parts.

\vspace{2mm}
{\bf I. Applying Itô's Formula and the Martingale Property}
\vspace{2mm}

To begin, let $x \in \fR^d$ and fix a time $s \in (0,T)$ such that the spatial gradient is essentially bounded
$$
\|f_x(s,\cdot)\|_{\mathrm{L}_\infty(\fR^d)} < \infty.
$$
By applying Itô's formula to the exponentially weighted process, we obtain the following expansion for any $t \in [s, T)$:
\begin{align}
										\notag
&\mathrm{e}^{\int_s^t c(r)\mathrm{d}r}f\left(s,x + X_{s,t} \right)  \\
										\notag
&= \int_s^t c(r) \mathrm{e}^{\int_s^r c(\rho)\mathrm{d}\rho} f\left(s,x + X_{s,r} \right) \mathrm{d}r \\
										\notag
&\quad + f(s,x) + \int_s^t \mathrm{e}^{\int_s^r c(\rho)\mathrm{d}\rho} b^i(r)f_{x^i}\left(s,x+X_{s,r}\right)  \mathrm{d}r \\
										\notag
&\quad + \int_s^t \mathrm{e}^{\int_s^r c(\rho)\mathrm{d}\rho} f_{x^i}\left(s,x+X_{s,r}\right) \sigma^{ij}(r) \mathrm{dB}^j_r \\
										\label{20240903 01}
&\quad + \int_s^t \mathrm{e}^{\int_s^r c(\rho)\mathrm{d}\rho} a^{ij}(r) f_{x^ix^j}\left(s,x+X_{s,r} \right) \mathrm{d}r.
\end{align}
Based on condition (iv) and estimate \eqref{20240909 01}, we can naturally bound the quadratic variation of the stochastic integral
\begin{align*}
&\int_s^t \left| \mathrm{e}^{\int_s^r c(\rho)\mathrm{d}\rho}  f_{x^i}(s,x+X_{s,r}) \sigma^{ij}(r)\right|^2 \mathrm{d}r \\
&\lesssim  \|f_{x}(s,\cdot)\|^2_{\mathrm{L}_\infty(\fR^d)}
\int_s^t \mathrm{e}^{2\int_s^r c(\rho)\mathrm{d}\rho} \max_{i,j}|a^{ij}(r)|\mathrm{d}r
< \infty.
\end{align*}
Because this integral is finite, the stochastic integral qualifies as a true martingale, which guarantees that its expected value vanishes, \textit{i.e.}
$$
\bE\left[\int_s^t  \mathrm{e}^{\int_s^r c(\rho)\mathrm{d}\rho} f_{x^i}\left(s,x+X_{s,r}\right) \sigma^{ij}(r) \mathrm{dB}^j_r\right] = 0.
$$

\vspace{2mm}
{\bf II. Deriving the Expected Value Equation}
\vspace{2mm}

Taking the expectation of both sides of \eqref{20240903 01} and utilizing Fubini's theorem to commute the expectation with the time integrals, we arrive at the following relation. 
By assumptions $(i)\sim (v)$ in the theorem, for almost $s\in (0,T)$ and for all $t\in [s,T)$,
\begin{align}
										\notag
\mathrm{e}^{\int_s^t c(r)\mathrm{d}r}\bE\left[ f\left(s,x + X_{s,t} \right) \right]
&=f(s,x) 
+\int_s^t c(r) \mathrm{e}^{\int_s^r c(\rho)\mathrm{d}\rho} \bE\left[ f\left(s,x + X_{s,r} \right) \right]  \mathrm{d}r  \\
										\notag
&+ \int_s^t b^i(r) \mathrm{e}^{\int_s^r c(\rho)\mathrm{d}\rho} \bE\left[f_{x^i}\left(s,x+X_{s,r}\right)\right] \mathrm{d}r \\
										\label{20240903 02}
&+ \int_s^t a^{ij}(r) \mathrm{e}^{\int_s^r c(\rho)\mathrm{d}\rho} \bE\left[f_{x^ix^j}\left(s,x+X_{s,r} \right) \right] \mathrm{d}r.
\end{align}

\vspace{2mm}
{\bf III. Integration and Recovering the Classical Solution}
\vspace{2mm}

For any fixed $t \in (0,T)$, the identity in \eqref{20240903 02} remains valid for almost every $s \in (0,t]$. By integrating both sides from $0$ to $t$ with respect to $s$, and invoking Fubini's theorem a second time to swap the order of integration, we find
\begin{align}
										\notag
u(t,x) 
&= \int_0^t \mathrm{e}^{\int_s^t c(r)\mathrm{d}r}\bE\left[ f\left(s,x + X_{s,t} \right) \right]
\mathrm{d}s \\
										\notag
&=\int_0^t f(s,x) \mathrm{d}s 
+\int_0^t\int_s^t c(r) \mathrm{e}^{\int_s^r c(\rho)\mathrm{d}\rho} \bE\left[ f\left(s,x + X_{s,r} \right) \right]  \mathrm{d}r \mathrm{d}s \\
										\notag
&\quad +\int_0^t \int_s^t b^i(r) \mathrm{e}^{\int_s^r c(\rho)\mathrm{d}\rho} \bE\left[f_{x^i}\left(s,x+X_{s,r}\right)\right] \mathrm{d}r \mathrm{d}s \\
										\notag
&\quad +\int_0^t \int_s^t a^{ij}(r) \mathrm{e}^{\int_s^r c(\rho)\mathrm{d}\rho} \bE\left[f_{x^ix^j}\left(s,x+X_{s,r} \right) \right] \mathrm{d}r \mathrm{d}s \\
										\notag
&=\int_0^t f(s,x) \mathrm{d}s 
+\int_0^tc(r) \int_0^r  \mathrm{e}^{\int_s^r c(\rho)\mathrm{d}\rho} \bE\left[ f\left(s,x + X_{s,r} \right) \right] \mathrm{d}s \mathrm{d}r\\
										\notag
&\quad +\int_0^t  b^i(r)  \int_0^r \mathrm{e}^{\int_s^r c(\rho)\mathrm{d}\rho} \bE\left[f_{x^i}\left(s,x+X_{s,r}\right)\right] \mathrm{d}s \mathrm{d}r  \\
										\notag
&\quad +\int_0^t a^{ij}(r) \int_0^r  \mathrm{e}^{\int_s^r c(\rho)\mathrm{d}\rho}  \bE\left[f_{x^ix^j}\left(s,x+X_{s,r} \right) \right] \mathrm{d}s \mathrm{d}r  \\
										\label{20240907 10}
&=\int_0^t \left( a^{ij}(r) u_{x^ix^j}(r,x) +b^i(r)u_{x^i}(r,x)+ c(r)u(r,x)+ f(r,x) \right) \mathrm{d}r.
\end{align}
This sequence of equalities demonstrates that $u$ is a classical solution to equation \eqref{main eqn 1}.

\vspace{2mm}
{\bf IV. Rigorous Justification of Fubini's Theorem}
\vspace{2mm}

To make this derivation completely rigorous, we must formally justify our use of Fubini's theorem by verifying that the corresponding absolute integrals are finite. Applying Tonelli's theorem alongside our initial assumptions on $f$, we can bound the terms as follows
\begin{align*}
&\int_0^t\int_s^t |c(r) \mathrm{e}^{\int_s^r c(\rho)\mathrm{d}\rho} \bE\left[ f\left(s,x + X_{s,r} \right) \right]|  \mathrm{d}r \mathrm{d}s  \\
&\leq \int_0^t\int_s^t \left|c(r)\right| \mathrm{e}^{\int_s^r c(\rho)\mathrm{d}\rho} \|f(s,\cdot)\|_{\mathrm{L}_\infty(\fR^d)} \mathrm{d}r \mathrm{d}s  \\
&= \int_0^t  \left|c(r)\right| \int_0^r \mathrm{e}^{\int_s^r c(\rho)\mathrm{d}\rho}\|f(s,\cdot)\|_{\mathrm{L}_\infty(\fR^d)}\mathrm{d}s \mathrm{d}r <\infty,
\end{align*}$$$$\begin{align*}
&\int_0^t\int_s^t \left|b^i(r) \mathrm{e}^{\int_s^r c(\rho)\mathrm{d}\rho} \bE\left[f_{x^i}(s,x+X_{s,r})\right] \right|\mathrm{d}r \mathrm{d}s   \\
&\leq \int_0^t  \left|b^i(r)\right| \int_0^r \mathrm{e}^{\int_s^r c(\rho)\mathrm{d}\rho}\|f_{x^i}(s,\cdot)\|_{\mathrm{L}_\infty(\fR^d)}\mathrm{d}s \mathrm{d}r 
<\infty,
\end{align*} and
\begin{align*}
&\int_0^t\int_s^t \left|a^{ij}(r)\bE\left[f_{x^ix^j}(r,x+X_{s,r})\right] \right|\mathrm{d}r \mathrm{d}s   \\
&\lesssim \int_0^t  \left|a^{ij}(r)\right| \int_0^r \mathrm{e}^{\int_s^r c(\rho)\mathrm{d}\rho} \|f_{x^ix^j}(s,\cdot)\|_{\mathrm{L}_\infty(\fR^d)}\mathrm{d}s \mathrm{d}r 
<\infty.  
\end{align*}
Finally, the initial source integral $\int_0^t f(s,x)\mathrm{d}s$ is guaranteed to be well-defined because
\begin{align*}
\int_0^t |f(s,x)|\mathrm{d}s 
&\lesssim \int_0^t\|f(s,\cdot)\|_{\mathrm{L}_\infty(\fR^d)}\mathrm{d}s < \infty.
\end{align*}
Since all of these bounding integrals are finite, $u(t,x)$ is rigorously well-defined for all $(t,x) \in (0,T)$ as established by the relationships in \eqref{20240907 10}.

\end{proof}

\begin{remark}
Recall
$X_{s,t}=\int_s^tb(r)\mathrm{d}r+\int_s^t \sigma(r) \mathrm{dB}_r$, $\sigma(r)= \sqrt{2} \sqrt{A(r)}$, and $A(r)= (a^{ij}(r))$,
and
\begin{align*}
u(t,x) :=  \int_0^t  \mathrm{e}^{\int_s^t c(r)\mathrm{d}r} \bE\left[f\left(s,x+ X_{s,t}\right)  \right] \mathrm{d}s.
\end{align*}
Put
\begin{align*}
\tilde f (t,x) = \mathrm{e}^{-\int_0^t c(s)\mathrm{d}s} f\left(t,x-\int_0^tb(r)\mathrm{d}r\right)
\end{align*}
and
\begin{align*}
v(t,x) :=  \int_0^t   \bE\left[\tilde f\left(s,x+ \int_s^t \sigma(r) \mathrm{dB}_r\right)  \right] \mathrm{d}s.
\end{align*}
Then $v$ is a classical solution to 
\begin{align*}
&u_t(t,x)=a^{ij}(t) u_{x^ix^i}(t,x)+\tilde f(t,x),  \\
&u(0,x)=0, \qquad (t,x) \in (0,T) \times \fR^d
\end{align*}
and
\begin{align*}
u(t,x)= \mathrm{e}^{\int_0^t c(s)\mathrm{d}s} v\left(t,x + \int_0^tb(r)\mathrm{d}r \right).
\end{align*}
It might appear that this observation facilitates the demonstration of Theorem \ref{classic existence solution} by initially posing that $b^i(t)=c(t)=0$ for all $t \in (0,t)$ and $i=1,\ldots,d$. 
Nevertheless, this approach necessitates the imposition of an additional condition, specifically, that $b^i(t)$ is integrable near $0$ in order to ensure the well-definedness of $\int_0^tb(r)\mathrm{d}r$.
\end{remark}

\begin{remark}
We now clarify why conditions $(iii)$ and $(iv)$ are stated separately in the preceding theorem.
Assuming $(iii)$ holds, we have
$$
|b^i(t)| \int_0^t \mathrm{e}^{\int_s^t c(r)\mathrm{d}r}\|f_{x^i}(s,\cdot)\|_{\mathrm{L}_\infty(\fR^d)}\mathrm{d}s < \infty \quad \text{for a.e. } t \in (0,T).
$$
Provided that $|b^i(t)| \neq 0$ a.e. on $(0,T)$, this naturally yields
$$
\|f_{x^i}(t,\cdot)\|_{\mathrm{L}_\infty(\fR^d)} < \infty \quad \text{for a.e. } t \in (0,T).
$$
However, if $|b^i(t)| = 0$ on a set of positive measure, this deduction is no longer valid. 
Because measure theory frequently employs the convention $0 \cdot \infty = 0$, the $L_\infty$-norm could theoretically be infinite while the product remains zero. 
To prevent this ambiguity and ensure finiteness, condition $(iv)$ is introduced as an independent requirement.
\end{remark}

\begin{remark}
As demonstrated in Theorem \ref{classic existence solution}, we cannot guarantee that the solution $u$ to equation \eqref{main eqn 1} is universally twice continuously differentiable with respect to the spatial variable on $(0,T) \times \fR^d$. 
As Remark \ref{classical sol derivative} points out, this lack of spatial $C^2$-regularity fundamentally stems from the possible degeneracy of the system's coefficients. 
Consequently, a smooth source term $f$ is insufficient on its own to ensure this level of regularity.
Mathematically, this restriction occurs because applying Fubini's theorem to the integral representation of $u$ in \eqref{20240923 10} requires a stringent integrability condition on the spatial derivatives of $f$, specifically,
\begin{align}
									\label{20240923 11}
\int_0^t \mathrm{e}^{\int_s^t c(r)\mathrm{d}r} \|f_{x^ix^j}(s, \cdot)\|_{\mathrm{L}_\infty(\fR^d)} \mathrm{d}s < \infty.
\end{align}
While condition \eqref{20240923 11} naturally holds for times $t$ where $a^{ij}(t) \neq 0$ (per assumption $(v)$ of Theorem \ref{classic existence solution}), it may fail elsewhere. 
Nonetheless, even when this bound is not globally satisfied for all $t$, the solution $u$ still preserves several highly beneficial properties. 
The subsequent corollary outlines these specific characteristics, relying strictly on the foundational assumptions from Theorem \ref{classic existence solution} that ensure the solution's existence.
\end{remark}

\begin{corollary}[Properties of a solution]
											\label{property classical solution}
Assuming the conditions of Theorem \ref{classic existence solution} are satisfied, let $u$ denote the classical solution to \eqref{main eqn 1}, explicitly defined by the probabilistic representation:
$$
u(t,x) := \int_0^t \mathrm{e}^{\int_s^t c(r)\mathrm{d}r} \bE\left[f\left(s,x+ X_{s,t}\right)\right] \mathrm{d}s.
$$This solution $u$ exhibits the following characteristics
\begin{enumerate}[(i)]
\item {(Supremum Bound)} For any $T' \in (0,T)$, the temporal supremum of the $\mathrm{L}_\infty$-norm is controlled by the source term and its derivatives:
\begin{align*}
\sup_{t \in [0,T']} \|u(t,\cdot)\|_{\mathrm{L}_\infty(\fR^d)}
&\leq \int_0^{T'} |a^{ij}(t)| \int_0^t \mathrm{e}^{\int_s^t c(r)\mathrm{d}r} \|f_{x^ix^j}(s,\cdot)\|_{\mathrm{L}_\infty(\fR^d)}\mathrm{d}s \mathrm{d}t  \\
&\quad +\int_0^{T'} |b^i(t)| \int_0^t \mathrm{e}^{\int_s^t c(r)\mathrm{d}r} \|f_{x^i}(s,\cdot)\|_{\mathrm{L}_\infty(\fR^d)}\mathrm{d}s \mathrm{d}t \\
&\quad +\int_0^{T'} |c(t)| \int_0^t \mathrm{e}^{\int_s^t c(r)\mathrm{d}r} \|f(s,\cdot)\|_{\mathrm{L}_\infty(\fR^d)}\mathrm{d}s , \mathrm{d}t.
\end{align*}

\item {(Absolute Continuity in Time)} For any fixed spatial point $x \in \fR^d$ and any $T' \in (0,T)$, the temporal mapping $t \mapsto u(t,x)$ is absolutely continuous on $[0,T']$.

\item {(Weighted Spatial Derivative Bounds)}  For every $T' \in (0,T)$, the temporal integrals of the weighted $\mathrm{L}_\infty$-norms of $u_{x^ix^j}$, $u_{x^i}$, and $u$ satisfy the respective estimates
$$
\int_0^{T'} |a^{ij}(t)| \|u_{x^ix^j}(t,\cdot)\|_{\mathrm{L}_\infty(\fR^d)} \mathrm{d}t \leq \int_0^{T'} |a^{ij}(t)| \int_0^t \mathrm{e}^{\int_s^t c(r)\mathrm{d}r} \|f_{x^ix^j}(s,\cdot)\|_{\mathrm{L}_\infty(\fR^d)}\mathrm{d}s \, \mathrm{d}t,
$$
$$
\int_0^{T'} |b^i(t)| \|u_{x^i}(t,\cdot)\|_{\mathrm{L}_\infty(\fR^d)} \mathrm{d}t \leq \int_0^{T'} |b^i(t)| \int_0^t \mathrm{e}^{\int_s^t c(r)\mathrm{d}r} \|f_{x^i}(s,\cdot)\|_{\mathrm{L}_\infty(\fR^d)}\mathrm{d}s \, \mathrm{d}t,
$$
$$\int_0^{T'} |c(t)| \|u(t,\cdot)\|_{\mathrm{L}_\infty(\fR^d)} \mathrm{d}t \leq \int_0^{T'} |c(t)| \int_0^t \mathrm{e}^{\int_s^t c(r)\mathrm{d}r} \|f(s,\cdot)\|_{\mathrm{L}_\infty(\fR^d)}\mathrm{d}s \, \mathrm{d}t.
$$
\end{enumerate}
Furthermore, if we assume an additional integrability condition for a given $p \in [1,\infty]$, namely that $\int_0^t \mathrm{e}^{\int_s^t c(r)\mathrm{d}r} \|f(s,\cdot)\|_{\mathrm{L}_p(\fR^d)}\mathrm{d}s < \infty$ for all $t \in (0,T)$, then the $\mathrm{L}_p$-norm of the solution satisfies the bound:
$$
\|u(t,\cdot)\|_{\mathrm{L}_p(\fR^d)} \leq \int_0^{t} \mathrm{e}^{\int_s^t c(r)\mathrm{d}r} \|f(s,\cdot)\|_{\mathrm{L}_p(\fR^d)}\mathrm{d}s < \infty \quad \text{for all } t \in (0,T).
$$

\end{corollary}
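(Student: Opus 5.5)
The plan is to read off every property from the explicit probabilistic representation of $u$ together with the integral identity \eqref{20240907 10} obtained in the proof of Theorem \ref{classic existence solution}. The starting point is that, for fixed $s$, the random variable $X_{s,t}$ does not depend on $x$. Hence differentiation in $x$ can be moved inside both the expectation and the $\mathrm{d}s$-integral, provided the derivatives of $f$ are bounded in the relevant integrated sense. This gives
$$
u_{x^ix^j}(t,x)=\int_0^t \mathrm{e}^{\int_s^t c(r)\mathrm{d}r}\bE\left[f_{x^ix^j}(s,x+X_{s,t})\right]\mathrm{d}s
$$
for those $t$ at which the right-hand side converges absolutely. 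By assumption $(v)$, this is the case for almost every $t$ with $a^{ij}(t)\neq 0$. The identity for $u_{x^i}$ is analogous, using $(iii)$ and $(iv)$. Fubini's theorem and dominated convergence on difference quotients justify the interchange at these times, and this is the same computation already implicit in Part III of that proof.

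\textbf{Step 1: item (iii).} Take absolute values in the representation, use $|\bE[g(x+X_{s,t})]|\le \|g\|_{\mathrm{L}_\infty(\fR^d)}$, and then multiply by $|a^{ij}(t)|$, $|b^i(t)|$ or $|c(t)|$ and integrate over $(0,T')$. On sets where a coefficient vanishes, the convention $0\cdot\infty=0$ makes the corresponding contribution zero, so the estimate holds regardless of whether the derivative exists there.

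\textbf{Step 2: items (i) and (ii).} From \eqref{20240907 10} we have
$$
|u(t,x)|\le \int_0^t\left(|a^{ij}||u_{x^ix^j}|+|b^i||u_{x^i}|+|c||u|+|f|\right)(r,x)\,\mathrm{d}r.
$$
Bounding each integrand by its $\mathrm{L}_\infty$ norm in $x$ and applying Step 1 gives a bound uniform in $t\le T'$. The $\int_0^{T'}\|f\|_{\mathrm{L}_\infty(\fR^d)}$ contribution that also appears here is finite by $(ii)$, and I will check whether the stated bound absorbs it or whether it must be added as a term. For item (ii), the integrand in \eqref{20240907 10} lies in $L_1(0,T')$ for each fixed $x$ by Step 1 and $(ii)$. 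Therefore $t\mapsto u(t,x)$ is an indefinite Lebesgue integral of an integrable function, and hence absolutely continuous.

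\textbf{Step 3: the $\mathrm{L}_p$ bound.} Apply Minkowski's integral inequality to the representation. For fixed $s$, translation invariance of the Lebesgue measure, combined with Jensen's inequality (or Minkowski's inequality for the expectation), gives
$$
\left\|\bE\left[f(s,\cdot+X_{s,t})\right]\right\|_{\mathrm{L}_p(\fR^d)}\le \bE\left\|f(s,\cdot+X_{s,t})\right\|_{\mathrm{L}_p(\fR^d)}=\|f(s,\cdot)\|_{\mathrm{L}_p(\fR^d)}.
$$
The same argument covers $p=\infty$. Measurability of $(s,\omega,x)\mapsto f(s,x+X_{s,t}(\omega))$ comes from joint measurability of $f$ and of $X$.

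\textbf{Main obstacle.} I expect the hardest part to be the rigorous justification of differentiating under the integrals in Step 1, on a full-measure set of those $t$ where the coefficient is nonzero. The difficulty is that $\|f_{x^ix^j}(s,\cdot)\|_{\mathrm{L}_\infty(\fR^d)}$ may fail to be integrable in $s$ at times where $a^{ij}(t)=0$. I will handle this by fixing such a $t$, using the finiteness of $\int_0^t \mathrm{e}^{\int_s^t c}\|f_{x^ix^j}(s,\cdot)\|_{\mathrm{L}_\infty(\fR^d)}\mathrm{d}s$, which holds for a.e. $t$ in the support of $a^{ij}$. I will then apply the mean value theorem to the difference quotients of $f_{x^i}$ and use dominated convergence.
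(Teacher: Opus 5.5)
Your route is the paper's route: derivative bounds read off the representation by the mean value theorem and dominated convergence, then (i) and (ii) from the classical identity \eqref{20240907 10}, and the $\mathrm{L}_p$ bound from Minkowski's inequality and translation invariance. The question you left open in Step 2, however, has a negative answer. The term $\int_0^{T'}\|f(t,\cdot)\|_{\mathrm{L}_\infty(\fR^d)}\,\mathrm{d}t$ cannot be absorbed, and item (i) is false as printed. Take $a^{ij}\equiv 0$, $b^i\equiv 0$, $c\equiv 0$ and $f(t,x)=\phi(x)$ with $0\neq\phi\in \mathrm{C}_c^\infty(\fR^d)$. Every hypothesis of Theorem \ref{classic existence solution} holds, $X_{s,t}=0$, and $u(t,x)=t\phi(x)$. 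So the left-hand side of (i) equals $T'\|\phi\|_{\mathrm{L}_\infty(\fR^d)}>0$, while the right-hand side is $0$. What your Step 2 actually proves is (i) with $\int_0^{T'}\|f(t,\cdot)\|_{\mathrm{L}_\infty(\fR^d)}\,\mathrm{d}t$ added to the right-hand side. The paper's one-line derivation of (i) from the classical identity yields the same thing and drops this term without comment. State and prove that corrected inequality.

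There is also a hole in Step 1 that your ``main obstacle'' paragraph does not close. To get $u_{x^ix^j}(t,x)$ by differencing $u_{x^i}(t,\cdot)$, you first need $u_{x^i}(t,y)=\int_0^t \mathrm{e}^{\int_s^t c(r)\mathrm{d}r}\bE[f_{x^i}(s,y+X_{s,t})]\,\mathrm{d}s$ for $y$ near $x$ at that same $t$. That requires $\int_0^t \mathrm{e}^{\int_s^t c(r)\mathrm{d}r}\|f_{x^i}(s,\cdot)\|_{\mathrm{L}_\infty(\fR^d)}\mathrm{d}s<\infty$. Hypothesis (iii) of Theorem \ref{classic existence solution} gives this only for a.e.\ $t$ with $b^i(t)\neq 0$, and hypothesis (iv) gives no integrability in $s$ at all. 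So at times where $a^{ij}(t)\neq0$ but $b^i(t)=0$, your mean value argument has nothing to differentiate; the paper passes over this too.

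One repair goes as follows.
\begin{enumerate}[(a)]
\item With $A$ symmetrized, $a^{ij}(t)\neq 0$ forces $a^{ii}(t)>0$. Hypothesis (v) then gives $\int_0^t \mathrm{e}^{\int_s^t c}\|f_{x^ii}(s,\cdot)\|_{\mathrm{L}_\infty}\mathrm{d}s<\infty$ with $f_{x^ii}:=f_{x^ix^i}$, for a.e.\ such $t$.
\item Apply Landau's inequality $\|g'\|_{\infty}^2\le 2\|g\|_{\infty}\|g''\|_{\infty}$ along lines in direction $e_i$, then Cauchy--Schwarz. This bounds the $f_{x^i}$-integral by $\sqrt2$ times the geometric mean of the quantity in (a) and $F(t):=\int_0^t \mathrm{e}^{\int_s^t c}\|f(s,\cdot)\|_{\mathrm{L}_\infty}\mathrm{d}s$.
\item $F$ is finite at every $t\in(0,T)$. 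The relation $F(t_2)=\mathrm{e}^{\int_{t_1}^{t_2}c}F(t_1)+\int_{t_1}^{t_2}\mathrm{e}^{\int_s^{t_2}c}\|f(s,\cdot)\|_{\mathrm{L}_\infty}\mathrm{d}s$ spreads finiteness from any single point. Hypothesis (ii) supplies such a point: either $c=0$ a.e., or $F<\infty$ a.e.\ on $\{c\neq0\}$.
\end{enumerate}
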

\begin{proof}
While the validity of these properties is mostly intuitive, a concise justification is necessary to ensure rigor. 
We can verify each condition as follows:
\begin{itemize}
\item{Estimates in $(iv)$:} These inequalities, along with the supplementary $\mathrm{L}_p$-estimate, follow directly from the integral representation of $u$ and our initial hypotheses on the source term $f$. 
We rigorously confirm the spatial differentiability of $u$ by applying the Mean Value Theorem and the Lebesgue Dominated Convergence Theorem with our established bounds—provided the relevant coefficients are non-zero, as noted in the preceding remark.

\item{Supremum Bound $(i)$:} Because $u$ satisfies \eqref{main eqn 1} as a classical solution, the supremum estimate in $(i)$ emerges naturally as a direct consequence of the results established in $(iv)$.

\item{Absolute Continuity $(iii)$:} This property holds because the classical nature of \eqref{main eqn 1}—arising from our definition of classical solutions—combined with the estimates derived in $(iv)$, guarantees that the solution is absolutely continuous with respect to time.

\item{Additional comment:} This final assertion is a straightforward consequence of the generalized Minkowski inequality.
\end{itemize}
\end{proof}

We now proceed to derive weak solutions from their classical counterparts.

\begin{theorem}[Existence of a weak solution]
									\label{stochastic weak existence}
Let $p \in [1,\infty]$ and suppose $f$ is a locally integrable function on  $[0,T) \times \fR^d$. 
We assume that Assumptions \ref{main as 1} -  \ref{main as 3} are satisfied, along with the following three specific integrability constraints:
\begin{enumerate}[(i)]
\item For any $0<s<t<T$,
\begin{align*}
\int_s^t \mathrm{e}^{\int_s^r c(\rho)\mathrm{d}\rho}\max_{i,j}|a^{ij}(r)| \mathrm{d}r < \infty.
\end{align*}
\item For any $t \in (0,T)$,
\begin{align*}
\int_0^t \mathrm{e}^{\int_s^t |c(r)|\mathrm{d}r} \|f(s,\cdot)\|_{\mathrm{L}_p(\fR^d)}\mathrm{d}s < \infty.
\end{align*}
\item For any $T' \in (0,T)$,
\begin{align*}
\int_0^{T'}  \mu_{a,b,c}(t) \int_0^t \mathrm{e}^{\int_s^t c(r)\mathrm{d}r}\|f(s,\cdot)\|_{\mathrm{L}_p(\fR^d)}\mathrm{d}s \mathrm{d}t  <\infty.
\end{align*}
\end{enumerate}
Under these prerequisites, the probabilistically defined function
\begin{align}
									\label{20240909 50}
u(t,x) := \int_0^t  \mathrm{e}^{\int_s^t c(r)\mathrm{d}r}  \bE\left[f\left(s,x + X_{s,t}\right)  \right] \mathrm{d}s
\end{align}
constitutes a valid weak solution to equation \eqref{main eqn 1}. 
Furthermore, for all $t \in (0,T)$, this solution $u$ obeys the following pointwise and weighted integral bounds
\begin{align}
										\label{20240909 82}
 \|u(t,\cdot)\|_{\mathrm{L}_p(\fR^d)}
\leq \int_0^{t} \mathrm{e}^{\int_s^t c(r)\mathrm{d}r} \|f(s,\cdot)\|_{\mathrm{L}_p(\fR^d)}\mathrm{d}s
\end{align}
and
\begin{align}
										\notag
&\int_0^{t} \mu_{a,b,c}(s) \|u(s,\cdot)\|_{\mathrm{L}_p(\fR^d)} \mathrm{d}s \\
										\label{20240909 51}
&\leq \int_0^{t} \mu_{a,b,c}(s) \int_0^s \mathrm{e}^{\int_r^s c(\rho)\mathrm{d}\rho} \|f(r,\cdot)\|_{\mathrm{L}_p(\fR^d)}\mathrm{d}r \mathrm{d}s.
\end{align}
Finally, if the zeroth-order coefficient $c(t)$ is locally integrable on $[0,T)$ meaning that
\begin{align}
										\label{20250826 01}
\int_0^{T'} |c(t)| \mathrm{d}t < \infty \quad \forall T' \in (0,T),
\end{align}
then for any $T' \in (0,T)$, the map $t \mapsto \|u(t,\cdot)\|_{\mathrm{L}_p(\fR^d)}$ is absolutely continuous on $[0,T']$ and the following supremum bound holds:
\begin{align}
										\label{20250826 02}
\sup_{t \in [0,T']} \mathrm{e}^{-\int_0^t c(r)\mathrm{d}r} \|u(t,\cdot)\|_{\mathrm{L}_p(\fR^d)}
\leq \int_0^{T'} \mathrm{e}^{-\int_0^s c(r)\mathrm{d}r} \|f(s,\cdot)\|_{\mathrm{L}_p(\fR^d)}\mathrm{d}s.
\end{align}

\end{theorem}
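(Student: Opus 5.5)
The plan is to derive the weak solution from the classical solutions of Theorem \ref{classic existence solution} by approximating $f$, and then pass to the limit in the weak formulation \eqref{20240909 70}.

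\textbf{Regularised data.} Let $\eta_\varepsilon$ be a standard spatial mollifier and choose cutoffs $\chi_n(t)=\mathbf{1}_{(1/n,T-1/n)}(t)\,\mathbf{1}_{\{|f|\le n\}}$. Set
$$
f_{n,\varepsilon}(t,\cdot)=\bigl(\chi_n f(t,\cdot)\bigr)*\eta_\varepsilon
$$
(for $p=\infty$ the truncation by level set is applied before mollifying). Then $f_{n,\varepsilon}(t,\cdot)$ is smooth with every derivative bounded in $\mathrm{L}_\infty$, with bounds of the form $N(n,\varepsilon)\|f(t,\cdot)\|_{\mathrm{L}_p}$ via Young's inequality. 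It also vanishes for $t<1/n$, which is the key point: the kernel never sees the non-integrable singularity at $t=0$. Conditions (ii)--(v) of Theorem \ref{classic existence solution} then reduce to (ii) and (iii) of the present theorem, multiplied by constants depending on $n,\varepsilon$. Condition \eqref{20240909 01} is exactly (i). Hence $u_{n,\varepsilon}$, defined by \eqref{20240909 50} with $f_{n,\varepsilon}$, is a classical solution.

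\textbf{Classical solutions are weak solutions.} Corollary \ref{property classical solution}(iii) gives $\int_0^{T'}\mu_{a,b,c}\|D^k u_{n,\varepsilon}\|_{\mathrm{L}_\infty}\,dt<\infty$. So the local integrability in Remark \ref{classic weak sol} holds, and $u_{n,\varepsilon}$ satisfies \eqref{20240909 70} with data $f_{n,\varepsilon}$. By Corollary \ref{property classical solution} (Minkowski),
$$
\|u_{n,\varepsilon}(t)\|_{\mathrm{L}_p}\le\int_0^t e^{\int_s^t c}\,\|f_{n,\varepsilon}(s)\|_{\mathrm{L}_p}\,ds .
$$

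\textbf{Passage to the limit.} Since the map $f\mapsto u$ given by \eqref{20240909 50} is linear, $u_{n,\varepsilon}-u$ is given by \eqref{20240909 50} with data $f_{n,\varepsilon}-f$. The Minkowski bound applied to this difference yields
$$
\|u_{n,\varepsilon}(t)-u(t)\|_{\mathrm{L}_p}\le\int_0^t e^{\int_s^t c}\,\|f_{n,\varepsilon}(s)-f(s)\|_{\mathrm{L}_p}\,ds .
$$
Here $\|f_{n,\varepsilon}-f\|_{\mathrm{L}_p}\le 2\|f\|_{\mathrm{L}_p}$, and this tends to $0$ pointwise in $s$ for $p<\infty$. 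Dominated convergence, using (ii) (note $e^{\int_s^t c}\le e^{\int_s^t|c|}$), gives convergence for every $t$. For $p=\infty$, convergence is only local $\mathrm{L}_1$ in $x$, which suffices after testing against $\varphi$.

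In the weak identity, each term such as $\int_0^t a^{ij}(s)(u_{n,\varepsilon}(s),\varphi_{x^ix^j})\,ds$ is handled the same way. It is dominated by
$$
\|\varphi\|_{C^2}\int_0^t\mu_{a,b,c}(s)\int_0^s e^{\int_r^s c}\,2\|f(r)\|_{\mathrm{L}_p}\,dr\,ds,
$$
which is finite by (iii). Dominated convergence in $(s,r)$ therefore passes the limit through. The $f$-term converges by local integrability of $f$ and (ii). This shows $u$ is a weak solution, and estimates \eqref{20240909 82} and \eqref{20240909 51} follow directly from Minkowski's inequality applied to \eqref{20240909 50}.

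\textbf{Main obstacle.} I expect the hard step to be justifying the domination near $s=0$, where $\mu_{a,b,c}$ may not be integrable. The estimate must be organised so that only the combination in (iii) appears, never $\int_0^t\mu_{a,b,c}$ itself. A second difficulty is the measurability of $u$ in $(t,x)$ and the fact that the $\mathrm{L}_p$ bound on the difference is needed pointwise in $t$.

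\textbf{Case $c$ integrable.} Write
$$
e^{-\int_0^t c}\,u(t)=\int_0^t e^{-\int_0^s c}\,\mathbb{E}\bigl[f(s,\cdot+X_{s,t})\bigr]\,ds .
$$
This gives \eqref{20250826 02} immediately. For absolute continuity of $t\mapsto\|u(t)\|_{\mathrm{L}_p}$, I would use the weak identity together with $\int_0^{T'}\mu_{a,b,c}\|u\|_{\mathrm{L}_p}<\infty$. By duality, this gives
$$
\bigl|\|u(t)\|_{\mathrm{L}_p}-\|u(s)\|_{\mathrm{L}_p}\bigr|\le\int_s^t|c|\,\|u\|_{\mathrm{L}_p}+\int_s^t\|f\|_{\mathrm{L}_p}
$$
plus terms involving $a$ and $b$. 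Those terms vanish because $\mathbb{E}[g(\cdot+X)]$ is an $\mathrm{L}_p$-contraction and the $a,b$ flow is norm-preserving in the sense of translation and convolution. The argument would be carried out first for the regularised $u_{n,\varepsilon}$ and then passed to the limit.
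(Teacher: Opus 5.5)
Your construction of the weak solution follows the paper's route. You regularise $f$ in $x$, apply Theorem~\ref{classic existence solution} to get a classical (hence weak) solution, and remove the regularisation by dominated convergence using (ii) and (iii). The bounds \eqref{20240909 82}, \eqref{20240909 51} and \eqref{20250826 02} then come from Minkowski's inequality, as in the paper. Your limit passage is correct and more explicit than the paper's: you apply Minkowski to $u_{n,\varepsilon}-u$, which is the representation of $f_{n,\varepsilon}-f$, and dominate the coefficient terms by the integrand of (iii). The time cut-off is harmless but unnecessary, since (ii)--(iii) already give conditions (ii)--(v) of Theorem~\ref{classic existence solution} for mollified data.

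\textbf{The gap: absolute continuity of $t\mapsto\|u(t,\cdot)\|_{\mathrm{L}_p(\fR^d)}$.}

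The $a$-terms do not drop out. The map $h\mapsto\mathbb{E}[h(\cdot+\int_s^t\sigma(r)\,\mathrm{dB}_r)]$ is an $\mathrm{L}_p$-contraction, not an isometry. Consequently the two-sided bound
\[
\bigl|\|u(t)\|_{\mathrm{L}_p}-\|u(s)\|_{\mathrm{L}_p}\bigr|\le\int_s^t|c(r)|\,\|u(r)\|_{\mathrm{L}_p}\,\mathrm{d}r+\int_s^t\|f(r)\|_{\mathrm{L}_p}\,\mathrm{d}r
\]
is false.

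Counterexample: take $d=1$, $p=2$, $T=2$, $a\equiv1$, $b=c=0$, and $f(s,x)=g(x)\mathbf{1}_{(0,1)}(s)$ with $0\neq g\in\mathrm{C}_c^\infty(\fR)$. All hypotheses hold. For $1<s<t<2$ one has $\cF[u(t,\cdot)](\xi)=\mathrm{e}^{-(t-s)\xi^2}\cF[u(s,\cdot)](\xi)$ with $\cF[u(s,\cdot)]\not\equiv0$. Hence $\|u(t)\|_{\mathrm{L}_2}<\|u(s)\|_{\mathrm{L}_2}$, while the right-hand side above is $0$.

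Duality cannot repair this. In the weak identity the $a^{ij}$- and $b^i$-terms are controlled by $\|\varphi_{xx}\|_{\mathrm{L}_{p'}}$ and $\|\varphi_x\|_{\mathrm{L}_{p'}}$, not by $\|\varphi\|_{\mathrm{L}_{p'}}$.

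What the contraction property does give is one-sided. Using $X_{r,t}=X_{r,s}+X_{s,t}$ with independent increments, the function
\[
t\mapsto\mathrm{e}^{-\int_0^t c(r)\mathrm{d}r}\|u(t)\|_{\mathrm{L}_p}-\int_0^t\mathrm{e}^{-\int_0^r c(\rho)\mathrm{d}\rho}\|f(r)\|_{\mathrm{L}_p}\,\mathrm{d}r
\]
is non-increasing. This yields bounded variation and \eqref{20250826 02}, but a monotone function can still have jumps or a Cantor part.

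Excluding these requires quantitative control of the norm lost to diffusion. For $p=2$, the identity for $|\cF[u(t,\cdot)](\xi)|^2$ shows that
\[
\int_0^{T'}\int a^{ij}(t)\xi^i\xi^j|\cF[u(t,\cdot)](\xi)|^2\,\mathrm{d}\xi\,\mathrm{d}t<\infty,
\]
so $\|u(t)\|_{\mathrm{L}_2}^2$ is absolutely continuous. One must still pass from $\|u\|^2$ to $\|u\|$, and other $p$ need their own argument. Your sketch contains no such step.

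The paper's own argument here does not supply it either. It removes $X_{r,t}$ by ``translation invariance'', although $X_{r,t}$ is random and depends on both $r$ and $t$. It leads to essentially the same two-sided bound and fails on the same example.
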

\begin{proof}
Applying the generalized Minkowski inequality to the definition of $u$ in \eqref{20240909 50} directly yields the estimates \eqref{20240909 82} and \eqref{20240909 51}. 
Therefore, it suffices to show that $u$ acts as a weak solution to \eqref{main eqn 1}. 
We proceed by a standard regularization argument using a Sobolev mollifier. 
Let $\varphi \in \mathrm{C}_c^\infty(\fR^d)$ be a non-negative bump function with unit integral, and for $\varepsilon > 0$, define the scaled mollifier $\varphi^{(\varepsilon)}(x) = \varepsilon^{-d} \varphi(x/\varepsilon)$. 
We construct the smooth approximations $u^{(\varepsilon)}(t,x)$ and $f^{(\varepsilon)}(t,x)$ by taking the spatial convolutions of $u$ and $f$ with $\varphi^{(\varepsilon)}$, respectively.
For the sake of completeness, we briefly recall several fundamental properties of these mollified functions without providing their standard proofs:
\begin{itemize}
\item{(Pointwise Convergence)} 
As $\varepsilon \downarrow 0$, the regularized functions $u^{(\varepsilon)}(t,x)$ and $f^{(\varepsilon)}(t,x)$ converge to $u(t,x)$ and $f(t,x)$, respectively, for almost every $(t,x) \in (0,T) \times \fR^d$.
\item{(Smoothness)} For any fixed $\varepsilon > 0$ and $t \in (0,T)$, the spatial mapping $x \mapsto f^{(\varepsilon)}(t,x)$ is infinitely differentiable.
\item {(Derivative Bounds)} For any $d$-dimensional multi-index $\alpha$, the spatial derivatives of the mollified source term satisfy the following bound, which depends on the $\mathrm{L}_p$-norm of the original function:$$\|D^\alpha_x f^{(\varepsilon)}(t,\cdot)\|_{\mathrm{L}_\infty(\fR^d)} \lesssim_{\varepsilon,\alpha} \|f(t,\cdot)\|_{\mathrm{L}_{p}(\fR^d)}.$$
\item{(Strong Convergence)} For each fixed time $t \in (0,\infty)$, the approximations converge strongly to their original counterparts in the $\mathrm{L}_p$-sense as the smoothing parameter vanishes:$$\lim_{\varepsilon \downarrow 0}f^{(\varepsilon)}(t,\cdot) = f(t,\cdot) \quad \text{and} \quad \lim_{\varepsilon \downarrow 0}u^{(\varepsilon)}(t,\cdot) = u(t,\cdot) \quad \text{in } \mathrm{L}_p(\fR^d).$$
\end{itemize}

We assert that for each fixed $\varepsilon \in (0,\infty)$, the regularized source function $(t,x) \mapsto f^{(\varepsilon)}(t,x)$ fulfills all the prerequisites demanded of $f$ in Theorem \ref{classic existence solution}. 
It is readily verified by combining our initial assumptions on $f$ and $a^{ij}$ with the standard derivative bounds for mollifiers. 
Furthermore, an application of Fubini's theorem confirms that the convolution $u^{(\varepsilon)}(t,x)$ corresponds to the probabilistic representation associated with the smoothed source $f^{(\varepsilon)}$, \textit{i.e.}
$$
u^{(\varepsilon)}(t,x) = \int_0^t \mathrm{e}^{\int_s^t c(r)\mathrm{d}r} \bE\left[f^{(\varepsilon)}\left(s,x + X_{s,t}\right)\right] \mathrm{d}s.
$$
Consequently, by invoking Theorem \ref{classic existence solution}, $u^{(\varepsilon)}$ serves as the classical solution to the modified equation. 
That is,
$$
u^{(\varepsilon)}(t,x) = \int_0^t \left(a^{ij}(s) u^{(\varepsilon)}_{x^ix^j}(s,x) + b^i(s)u^{(\varepsilon)}_{x^i}(s,x) + c(s)u^{(\varepsilon)}(s,x) + f^{(\varepsilon)}(s,x) \right) \mathrm{d}s
$$
and it inherits the properties detailed in Corollary \ref{property classical solution}.

Given these classical properties and applying Fubini's theorem, it is straightforward to demonstrate that $u^{(\varepsilon)}$ also satisfies the weak formulation of \eqref{main eqn 1} with the smoothed source term $f^{(\varepsilon)}$ because of integration by parts. 
Specifically, for any test function $\varphi \in \mathrm{C}_c^\infty(\fR^d)$ and almost every $t \in (0,T)$, the following identity holds:
\begin{align*}
(u^{(\varepsilon)}(t,\cdot), \varphi)_{\mathrm{L}_2(\fR^d)} 
&= \int_0^t \left(u^{(\varepsilon)}(s,\cdot), a^{ij}(s)\varphi_{x^ix^j} -b^i(s)\varphi_{x^i}+c(s) \varphi\right)_{\mathrm{L}_2(\fR^d)} ds \\ 
&\quad +\int_0^t\left(f^{(\varepsilon)}(s,\cdot), \varphi \right)_{\mathrm{L}_2(\fR^d)} ds \quad \text{a.e.}~ t \in (0,T).
\end{align*}
Finally, we pass to the limit as $\varepsilon \downarrow 0$. Utilizing the strong convergence properties of the mollifiers alongside the integrability conditions on $f$, the Lebesgue Dominated Convergence Theorem yields the desired relation for any test function $\varphi \in \mathrm{C}_c^\infty(\fR^d)$:
\begin{align*}
(u(t,\cdot), \varphi)_{\mathrm{L}_2(\fR^d)} 
&= \int_0^t \left(u(s,\cdot), a^{ij}(s)\varphi_{x^ix^j} -b^i(s)\varphi_{x^i}+c(s) \varphi\right)_{\mathrm{L}_2(\fR^d)} ds \\ 
&\quad +\int_0^t\left(f(s,\cdot), \varphi \right)_{\mathrm{L}_2(\fR^d)} ds \quad \text{a.e.}~ t \in (0,T).
\end{align*}
This confirms that $u$ is indeed a weak solution to \eqref{main eqn 1}, thereby completing the proof of the primary statement.

It remains only to verify the additional statements regarding the solution's bounds and temporal regularity. 
First, establishing the supremum bound in \eqref{20250826 02} is straightforward; this inequality follows as a direct and immediate consequence of combining \eqref{20240909 82} with the local integrability condition \eqref{20250826 01}.
Next, we establish the absolute continuity of the map $t \mapsto \|u(t,\cdot)\|_{\mathrm{L}_p(\fR^d)}$. 
By utilizing the generalized Minkowski inequality in conjunction with the translation invariance of the $\mathrm{L}_p(\fR^d)$-norm, we can bound the variation of the norm between any two times $0 < s < t < T' < T$:
\begin{align*}
&\left|\|u(t,\cdot)\|_{\mathrm{L}_p(\fR^d)} - \|u(s,\cdot)\|_{\mathrm{L}_p(\fR^d)}\right| \\
&=\left|\left\|\int_0^t  \mathrm{e}^{\int_r^t c(\rho)\mathrm{d}\rho}  \bE\left[f\left(r, \cdot \right)  \right] \mathrm{d}r\right\|_{\mathrm{L}_p(\fR^d)}
-\left\|\int_0^s  \mathrm{e}^{\int_r^s c(\rho)\mathrm{d}\rho}  \bE\left[f\left(r, \cdot \right)  \right] \mathrm{d}r \right\|_{\mathrm{L}_p(\fR^d)} \right|\\
&\leq \left|\left(\mathrm{e}^{\int_0^t c(\rho)\mathrm{d}\rho}  -\mathrm{e}^{\int_0^s c(\rho)\mathrm{d}\rho}\right) \right| \int_0^t  \mathrm{e}^{-\int_0^r c(\rho)\mathrm{d}\rho}  \left\| f\left(r,\cdot\right) \right\|_{\mathrm{L}_p(\fR^d)} \mathrm{d}r \\
&\quad+\mathrm{e}^{\int_0^s |c(\rho)|\mathrm{d}\rho} \int_s^t  \mathrm{e}^{-\int_0^r c(\rho)\mathrm{d}\rho}  \left\| f\left(r,\cdot\right) \right\|_{\mathrm{L}_p(\fR^d)}\mathrm{d}r .
\end{align*}
The upper bound is easily controlled because the integrals of locally integrable functions are absolutely continuous. 
From this, it immediately follows that the temporal mapping $t \mapsto \|u(t,\cdot)\|_{\mathrm{L}_p(\fR^d)}$ is also absolutely continuous on $[0,T']$.
\end{proof}

\begin{remark}
									\label{mollifier remark}
Suppose $u$ is the weak solution established in Theorem \ref{stochastic weak existence}. 
As demonstrated in the theorem's proof, its Sobolev mollification $u^{(\varepsilon)}(t,x)$ acts as a classical solution to equation \eqref{main eqn 1} corresponding to the regularized source term $f^{(\varepsilon)}$.
\end{remark}

\begin{remark}
									\label{Ito uniqueness}
Having established existence, it is natural to address the uniqueness of the solution. 
However, the classical solution $u$ derived in Theorem \ref{classic existence solution} does not possess sufficient regularity for a direct application of It\^o's formula. 
A standard uniqueness proof using It\^o's formula demands that the solution belong to $C^{1,2}([0,T) \times \fR^d)$—requiring continuous differentiability once in time and twice in space. 
Because our solution lacks a guaranteed continuous time derivative $u_t$, a rigorous application of It\^o's formula is precluded. 
To overcome this limitation, the subsequent sections will introduce alternative strategies to prove uniqueness.
\end{remark}

\mysection{Fourier transform method for uniqueness of a weak solution}
												\label{fourier method}

In this section, we establish the uniqueness of the solution by utilizing a classical approach based on the Fourier transform. 
This approach is highly beneficial when the inhomogeneous term is an $\mathrm{L}_p(\mathbf{R}^d)$-valued function for $p \in [1,2]$.
We begin by briefly recalling the foundational definitions and properties of the Fourier transform.
For any function $f \in \mathrm{L}_1(\fR^d)$, its Fourier transform is given by
$$
\cF[f](\xi) = \frac{1}{(2\pi)^{d/2}} \int_{\fR^d} \mathrm{e}^{-i x \cdot \xi} f(x) \mathrm{d}x, \quad \xi \in \fR^d.
$$
Directly from this definition, we obtain the uniform bound
$$
\|\cF[f]\|_{\mathrm{L}_\infty(\fR^d)} \leq \frac{1}{(2\pi)^{d/2}}\|f\|_{\mathrm{L}_1(\fR^d)}.$$
Furthermore, Plancherel's theorem guarantees that
$$
\|\cF[f]\|_{\mathrm{L}_2(\fR^d)} = \|f\|_{\mathrm{L}_2(\fR^d)} \quad \forall f \in \mathrm{L}_1(\fR^d) \cap \mathrm{L}_2(\fR^d).
$$
Due to the completeness of $\mathrm{L}_2(\fR^d)$, this equality allows the Fourier transform to be uniquely extended as an isometry on $\mathrm{L}_2(\fR^d)$.
By interpolating between these $\mathrm{L}_1$ and $\mathrm{L}_2$ estimates using the Riesz–Thorin theorem, the Fourier transform can be extended to a bounded operator from $\mathrm{L}_1(\fR^d) + \mathrm{L}_2(\fR^d)$ into $\mathrm{L}_\infty(\fR^d) + \mathrm{L}_2(\fR^d)$. 
Most notably, for any $p \in [1,2]$, we obtain the Hausdorff–Young inequality
\begin{align}
									\label{Fourier est}
\|\cF[f]\|_{\mathrm{L}_{p/(p-1)}(\fR^d)} \leq \left(\frac{1}{2\pi}\right)^{\frac{d(2-p)}{2p}} \|f\|_{\mathrm{L}_{p}(\fR^d)} 
\quad \forall f \in \mathrm{L}_{p}(\fR^d),
\end{align}
where we adopt the convention $1/0 := \infty$. 
Consequently, for any $p \in [1,2]$ and $f \in \mathrm{L}_p(\fR^d)$, the Fourier transform $\cF[f]$ is well-defined as a function in the conjugate space $\mathrm{L}_{p'}(\fR^d)$ with $p' = \frac{p}{p-1}$. 
In particular, this guarantees that $\cF[f]$ is locally integrable. 
We refer the reader to \cite{Gra 2014} for a comprehensive treatment of these details. 
Relying on these properties in conjunction with a suitable approximation scheme, we arrive at the following identity.

\begin{lemma}
										\label{Plancherel lemma}
Let $p \in [1,2]$, $f \in \mathrm{L}_{p}(\fR^d)$, and $\varphi \in \mathrm{C}_c^\infty(\fR^d)$.
Then
\begin{align*}
\int_{\fR^d} f(x) \varphi(x)\mathrm{d}x
=\int_{\fR^d} \cF[f](\xi) \overline{\cF[\varphi](\xi)}\mathrm{d}x.
\end{align*}
\end{lemma}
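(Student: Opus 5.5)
The plan is to prove the identity first on a dense class where everything is classical, and then pass to the limit in $f$ with respect to the $\mathrm{L}_p$-norm. The integral on the right is over $\xi$ (the $\mathrm{d}x$ is a typo).

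\emph{Step 1: Schwartz functions.} Take $f$ in the Schwartz class, or more simply $f \in \mathrm{C}_c^\infty(\fR^d)$. Then $f,\varphi \in \mathrm{L}_1(\fR^d)\cap\mathrm{L}_2(\fR^d)$, and the identity is Parseval's formula. This follows from Plancherel's theorem recalled above by polarization. Since $f$ and $\varphi$ are real-valued, the left-hand side equals $(f,\varphi)_{\mathrm{L}_2}$ with complex conjugation on $\varphi$ inserted harmlessly. Alternatively, one can verify it directly: insert the inversion formula $\varphi(x)=(2\pi)^{-d/2}\int \mathrm{e}^{ix\cdot\xi}\cF[\varphi](\xi)\,\mathrm{d}\xi$, which holds because $\cF[\varphi]$ is Schwartz, and apply Fubini. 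The integrand $|f(x)||\cF[\varphi](\xi)|$ is integrable on $\fR^d\times\fR^d$.

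\emph{Step 2: Density and continuity of both sides.} For general $f\in\mathrm{L}_p(\fR^d)$ with $p\in[1,2]$, choose $f_n\in\mathrm{C}_c^\infty(\fR^d)$ with $f_n\to f$ in $\mathrm{L}_p$.
\begin{itemize}
\item The left side converges by Hölder's inequality, since $\varphi\in\mathrm{L}_{p'}$.
\item For the right side, the Hausdorff--Young inequality \eqref{Fourier est} gives $\cF[f_n]\to\cF[f]$ in $\mathrm{L}_{p'}(\fR^d)$, where $p'=p/(p-1)\in[2,\infty]$. Since $\varphi$ is Schwartz, $\cF[\varphi]$ is Schwartz and hence lies in $\mathrm{L}_p(\fR^d)$, the dual exponent of $p'$. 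Hölder's inequality then yields
\[
\int \cF[f_n]\overline{\cF[\varphi]}\,\mathrm{d}\xi\to\int \cF[f]\overline{\cF[\varphi]}\,\mathrm{d}\xi .
\]
\end{itemize}
Passing to the limit in the Step 1 identity for $f_n$ gives the claim.

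\emph{Main obstacle: consistency of the definition of $\cF[f]$.} The only delicate point is that $\cF[f]$ for $f\in\mathrm{L}_p$ must mean the same object as the extension used in \eqref{Fourier est}. This object is the continuous extension from $\mathrm{L}_1\cap\mathrm{L}_2$, which is consistent on $\mathrm{L}_1+\mathrm{L}_2$.
\begin{itemize}
\item For $p=1$, the integral formula is used directly, and the $\mathrm{L}_\infty$ convergence $\cF[f_n]\to\cF[f]$ follows from the $\mathrm{L}_1$ bound.
\item For $1<p\le2$, one can split $f=f\mathbf{1}_{|f|>1}+f\mathbf{1}_{|f|\le1}\in\mathrm{L}_1+\mathrm{L}_2$. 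The identity is then checked separately for the $\mathrm{L}_1$ part (by Fubini directly, as in Step 1) and for the $\mathrm{L}_2$ part (by Plancherel and polarization). Linearity then finishes the argument without relying on the abstract density limit.
\end{itemize}
I would present the splitting argument as the main proof and the density argument as a remark, since the splitting is fully elementary.
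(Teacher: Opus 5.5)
Your proof is correct. The paper does not actually write out a proof of this lemma. It only says the identity follows from the $\mathrm{L}_1\to\mathrm{L}_\infty$ bound, Plancherel's theorem and the Hausdorff--Young inequality \eqref{Fourier est} ``in conjunction with a suitable approximation scheme.'' That is exactly your Steps 1--2: Parseval on $\mathrm{C}_c^\infty(\fR^d)$, then density in $\mathrm{L}_p$, using $\cF[f_n]\to\cF[f]$ in $\mathrm{L}_{p'}$ and $\cF[\varphi]\in\mathrm{L}_p$.

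The decomposition $f=f\mathbf{1}_{|f|>1}+f\mathbf{1}_{|f|\le 1}\in\mathrm{L}_1+\mathrm{L}_2$ that you prefer to present is a genuinely different route. It needs no limit and no Hausdorff--Young at all. It uses only Fubini with Fourier inversion for the Schwartz function $\cF[\varphi]$ on the $\mathrm{L}_1$ piece, and polarized Plancherel on the $\mathrm{L}_2$ piece.

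The splitting is also better matched to how the paper defines $\cF$ on $\mathrm{L}_p$, namely as an operator on $\mathrm{L}_1+\mathrm{L}_2$. With that definition, the consistency issue you raise is settled by the standard fact that the $\mathrm{L}_1$ and $\mathrm{L}_2$ transforms agree on $\mathrm{L}_1\cap\mathrm{L}_2$. Integrability of $\cF[f]\overline{\cF[\varphi]}$ is then immediate, since $\cF[f]\in\mathrm{L}_\infty+\mathrm{L}_2$ and $\cF[\varphi]\in\mathrm{L}_1\cap\mathrm{L}_2$.

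The density route is the natural one if $\cF$ on $\mathrm{L}_p$ is understood as the Hausdorff--Young extension from a dense class. It also keeps everything phrased through $\|\cF[f]\|_{\mathrm{L}_{p'}}$, which is the quantity the paper needs anyway when it applies this lemma in Lemma \ref{pre representation}.

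One small remark on the Fubini step. Only the reality of $\varphi$ is needed, and it is genuinely needed: for complex $\varphi$ the left side must read $\int f\overline{\varphi}\,\mathrm{d}x$. If you expand $\overline{\varphi(x)}=(2\pi)^{-d/2}\int \mathrm{e}^{-\mathrm{i}x\cdot\xi}\overline{\cF[\varphi](\xi)}\,\mathrm{d}\xi$ instead of $\varphi(x)$, you obtain the right-hand side directly. This avoids landing on its complex conjugate and then appealing to $f$ being real.
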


We are now in a position to establish a fundamental lemma that guarantees the uniqueness of a solution $u$ to \eqref{main eqn 1} within a specific function class, provided the source term $f$ belongs to $\mathrm{L}_{1,p}((0,T) \times \fR^d)$ for $p \in [1,2]$.

Throughout this section, the coefficients $a^{ij}(t)$, $b^i(t)$, and $c(t)$ are considered merely measurable functions unless otherwise specified.
\begin{lemma}
									\label{pre representation}
Assume $p \in [1,2]$ and let $f \in \mathrm{L}_{1,p}((0,T) \times \fR^d)$. 
Suppose $u$ is a locally integrable function on $[0,T)\times \fR^d$ that serves as a weak solution to \eqref{main eqn 1}. 
We further impose the following three regularity and integrability conditions on $u$:
\begin{enumerate}[(i)]
\item{(Spatial $\mathrm{L}_p$-Bound)} For any $t \in (0,T)$,
\begin{align*}
\|u(t,\cdot)\|_{\mathrm{L}_{p}(\fR^d)} < \infty.
\end{align*}
\item{(Weighted Fourier Integrability)} For any $T' \in (0,T)$, 
\begin{align}
										\label{20240908 20-2}
\int_0^{T'}\mu_{a,b,c}(t)\|\cF[u(t,\cdot)]\|_{\mathrm{L}_{p'}(\fR^d)} \mathrm{d}t 
< \infty,
\end{align}
where $p' = \frac{p}{p-1}$ denotes the H\"older conjugate of $p$ (adopting the convention $1/0 := \infty$).
\item{(Temporal Continuity in Frequency)} For almost every frequency $\xi \in \fR^d$, the temporal mapping $t \mapsto \cF[u(t,\cdot)](\xi)$ is continuous over the interval $[0,T)$.
\end{enumerate}
Under these assumptions, the Fourier transform of the solution satisfies the following integral identity for almost every $\xi \in \fR^d$ and all $t \in [0,T)$,
\begin{align}
										\notag
\cF[u(t,\cdot)](\xi) 
&= \int_0^t \left(-a^{ij}(s)\xi^i\xi^j+ \mathrm{i}b^i(s)\xi^i+c(s) \right)\cF[u(s,\cdot)](\xi) \mathrm{d}s  \\
										\label{20240908 40-2}
&\quad +\int_0^t \cF[f(s,\cdot)](\xi) \mathrm{d}s.
\end{align}

\end{lemma}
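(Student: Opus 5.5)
The plan is to test the weak formulation \eqref{20240909 70} against modulated Gaussians (or general Schwartz-type test functions) and convert both sides to frequency space with Lemma \ref{Plancherel lemma}, so that we first obtain the identity \eqref{20240908 40-2} integrated against arbitrary test functions in $\xi$. We then remove the test function by a density argument, using the continuity hypothesis (iii) to upgrade "almost every $t$" to "all $t$".

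\textbf{Step 1: transferring the weak formulation.} Fix $\varphi\in \mathrm{C}_c^\infty(\fR^d)$. For each $s$, apply Lemma \ref{Plancherel lemma} to $u(s,\cdot)\in \mathrm{L}_p$, to $f(s,\cdot)\in\mathrm{L}_p$ (for a.e.\ $s$), and to the test functions $\varphi_{x^ix^j},\varphi_{x^i},\varphi$. Since $\cF[\varphi_{x^ix^j}](\xi)=-\xi^i\xi^j\cF[\varphi](\xi)$ and $\cF[\varphi_{x^i}](\xi)=\mathrm{i}\xi^i\cF[\varphi](\xi)$, the weak identity becomes
\[
\int \cF[u(t)]\overline{\cF[\varphi]}\,\mathrm{d}\xi=\int_0^t\!\!\int \bigl(-a^{ij}\xi^i\xi^j+\mathrm{i}b^i\xi^i+c\bigr)\cF[u(s)]\overline{\cF[\varphi]}\,\mathrm{d}\xi\,\mathrm{d}s+\int_0^t\!\!\int\cF[f(s)]\overline{\cF[\varphi]}\,\mathrm{d}\xi\,\mathrm{d}s .
\]
The sign of the $b$ term has to be checked against the integration by parts $-b^i\varphi_{x^i}$ and the conjugation.

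\textbf{Step 2: Fubini.} The absolute integrand of the coefficient term is bounded by
\[
\mu_{a,b,c}(s)\,\|\cF[u(s)]\|_{\mathrm{L}_{p'}}\,\bigl\|(1+|\xi|^2)\cF[\varphi]\bigr\|_{\mathrm{L}_p}
\]
via Hölder, and this is integrable on $(0,t)$ by hypothesis \eqref{20240908 20-2}. The $f$ term is controlled by \eqref{Fourier est} and $f\in\mathrm{L}_{1,p}$. Fubini therefore lets us write everything as $\int \Psi_t(\xi)\overline{\cF[\varphi](\xi)}\,\mathrm{d}\xi=0$, where $\Psi_t(\xi)$ is the difference of the two sides of \eqref{20240908 40-2}. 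By the same bounds, $\Psi_t$ is defined for a.e.\ $\xi$ and belongs to $\mathrm{L}_{p'}(\fR^d;(1+|\xi|^2)^{-1}\mathrm{d}\xi)$.

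\textbf{Step 3: density (main obstacle).} We must conclude $\Psi_t=0$ a.e.\ from orthogonality to $\{\cF[\varphi]:\varphi\in \mathrm{C}_c^\infty\}$. I would extend the identity to Schwartz $\varphi$ by approximation; the bounds in Step 2 are continuous in the Schwartz topology, and Lemma \ref{Plancherel lemma} extends as well. Then $\cF[\varphi]$ ranges over all of the Schwartz space, and in particular over $\mathrm{C}_c^\infty$. Testing against all $\psi\in\mathrm{C}_c^\infty$ and using that $\Psi_t$ is locally integrable gives $\Psi_t=0$ a.e., for each fixed $t$. The care needed here is the exceptional $\xi$-set, which a priori depends on $t$.

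\textbf{Step 4: all $t$ simultaneously.} Take $t$ in a countable dense set $Q\subset[0,T)$ and union the null sets. For a.e.\ $\xi$, the right-hand side of \eqref{20240908 40-2} is continuous in $t$: the integrand is in $\mathrm{L}_1(0,T')$ for a.e.\ $\xi$, by Tonelli on a bounded $\xi$-region using \eqref{20240908 20-2}, and likewise for $f$. The left-hand side is continuous in $t$ by (iii). Hence the identity extends from $Q$ to all $t\in[0,T)$ for a.e.\ $\xi$.
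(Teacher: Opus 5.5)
Your proposal is correct and follows the paper's architecture: you transfer the weak formulation to frequency space via Lemma \ref{Plancherel lemma}, justify Fubini with \eqref{20240908 20-2}, obtain the identity for each fixed $t$ and a.e.\ $\xi$, and then pass to all $t$ by continuity (your countable-dense-set argument makes explicit what the paper only asserts). The one difference is how you remove the test function: the paper tests against translates $\psi^{(\varepsilon)}(x-\cdot)$ of a rescaled symmetric bump, uses uniqueness of the $\mathrm{L}_1$-Fourier transform in $x$, and lets $\varepsilon\downarrow 0$ using $\cF[\psi](0)=1$, whereas you use density of $\cF[\mathrm{C}_c^\infty]$ in the Schwartz class together with the fundamental lemma of the calculus of variations (a small imprecision: your bounds actually give $(1+|\xi|^2)^{-1}\Psi_t\in\mathrm{L}_{p'}$, not membership in the weighted space you wrote, but this is harmless for both the continuity and the local integrability you need).
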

\begin{proof}
Leveraging the Fourier estimate \eqref{Fourier est} alongside our assumptions regarding $u$ and $f$, one can readily verify that both sides of \eqref{20240908 40-2} are continuous with respect to $t \in (0,T)$ for almost every $\xi \in \fR^d$. 
Consequently, it suffices to prove that for any fixed $t \in (0,T)$, the following equality holds for almost every $\xi \in \fR^d$,
\begin{align}
										\notag
\cF[u(t,\cdot)](\xi) 
&= \int_0^t \left(-a^{ij}(s)\xi^i\xi^j+ \mathrm{i}b^i(s)\xi^i+c(s) \right)\cF[u(s,\cdot)](\xi) \mathrm{d}s  \\
										\label{20240908 40}
&\quad +\int_0^t \cF[f(s,\cdot)](\xi) \mathrm{d}s.
\end{align}

We organize the remainder of the proof into several distinct parts.

\vspace{2mm}
{\bf I. Transitioning to the Frequency Domain }
\vspace{2mm}

By the standard definition of a weak solution, for any $t \in (0,T)$ and test function $\varphi \in \mathrm{C}_c^\infty(\fR^d)$, the following identity holds:
\begin{align*}
\int_{\fR^d} u(t,x)\varphi(x) \mathrm{d}x
&= \int_0^t \int_{\fR^d} u(s,x) \left(a^{ij}(s)\varphi_{x^ix^j}(x) -b^i(s)\varphi_{x^i}(x)+c(s) \varphi(x)\right) \mathrm{d}x ds \\ 
&\quad +\int_0^t \int_{\fR^d} f(s,x) \varphi(x) \mathrm{d}x ds.
\end{align*}
Applying Lemma \ref{Plancherel lemma} and utilizing the standard properties of the Fourier transform (which convert spatial derivatives into algebraic multipliers), we rewrite this equation in the frequency domain. This yields the integral identity:
\begin{align}
										\notag
&\int_{\fR^d} \cF[u(t,\cdot)](\xi) \overline{\cF[\varphi](\xi)} \mathrm{d}\xi \\
										\notag
&= \int_0^t \int_{\fR^d} \cF[u(s,\cdot)](\xi) \overline{\left(a^{ij}(s)\cF[\varphi_{x^ix^j}](\xi) -b^i(s)\cF[\varphi_{x^i}](\xi)+c(s) \cF[\varphi](\xi)\right)} \mathrm{d}\xi ds \\ 
										\notag
&\quad +\int_0^t \int_{\fR^d} \cF[f(s,\cdot)] \overline{\cF[\varphi](\xi)} \mathrm{d}\xi ds  \\ 
										\notag
&= \int_0^t \int_{\fR^d} \cF[u(s,\cdot)](\xi) \left(\overline{-\left(a^{ij}(s)\xi^i\xi^j\cF[\varphi](\xi) -\mathrm{i} b^i(s) \xi^i \cF[\varphi](\xi)+c(s) \cF[\varphi](\xi)\right)}\right) \mathrm{d}\xi ds \\ 
										\notag
&\quad +\int_0^t \int_{\fR^d} \cF[f(s,\cdot)] \overline{\cF[\varphi](\xi)} \mathrm{d}x ds  \\ 
										\notag
&= \int_0^t \int_{\fR^d}  \left(-a^{ij}(s)\xi^i\xi^j+\mathrm{i}b^i(s)\xi^i+c(s) \right)\cF[u(s,\cdot)](\xi) \overline{\cF[\varphi](\xi)} \mathrm{d}\xi ds \\ 
											\label{20240908 01}
&\quad +\int_0^t \int_{\fR^d} \cF[f(s,\cdot)] \overline{\cF[\varphi](\xi)} \mathrm{d}\xi ds  
\end{align}
for all $t \in (0,T)$ and  $\varphi \in \mathrm{C}_c^\infty(\fR^d)$.

\vspace{2mm}
{\bf II. Constructing the Approximation }
\vspace{2mm}

To extract the pointwise equality, we fix a symmetric test function $\psi \in \mathrm{C}_c^\infty(\fR^d)$ such that $\psi(y) = \psi(-y)$ and normalized to satisfy $\int_{\fR^d} \psi(y) \mathrm{d}y = (2\pi)^{d/2}$.
For any scaling parameter $\varepsilon > 0$, we define the rescaled function
$$
\psi^{(\varepsilon)}(y) = \frac{1}{\varepsilon^d}\psi(y/\varepsilon).
$$

\vspace{2mm}
{\bf III. Applying Fubini and Fourier Inversion Theorems }
\vspace{2mm}

We substitute the specific shifted test function $\phi = \psi^{(\varepsilon)}(x - \cdot)$ into \eqref{20240908 01}. Recalling that $\cF[\psi^{(\varepsilon)}(x-\cdot)](\xi) = \frac{1}{(2\pi)^{d/2}}\int_{\fR^d} \mathrm{e}^{-\mathrm{i} \xi \cdot y} \psi^{(\varepsilon)}(x-y) \mathrm{d}y$, and justifying the swap of integration order via Fubini's theorem (permitted by condition \eqref{20240908 20-2}), we obtain
\begin{align*}
&\int_{\fR^d} \mathrm{e}^{\mathrm{i} x \cdot \xi} \cF[u(t,\cdot)](\xi) \overline{\cF[\psi^{(\varepsilon)}(\xi)} \mathrm{d}\xi  \\
&=\int_{\fR^d} \cF[u(t,\cdot)](\xi) \overline{\cF[\psi^{(\varepsilon)}(x-\cdot)](\xi)} \mathrm{d}\xi  \\
&= \int_0^t \int_{\fR^d}  \left(-a^{ij}(s)\xi^i\xi^j+\mathrm{i}b^i(s)\xi^i+c(s) \right)\cF[u(s,\cdot)](\xi) \overline{\cF[\psi^{(\varepsilon)}(x- \cdot)](\xi)} \mathrm{d}\xi ds \\ 
&\quad +\int_0^t \int_{\fR^d} \cF[f(s,\cdot)] \overline{\cF[\psi^{(\varepsilon)}(x- \cdot)](\xi)} \mathrm{d}\xi ds  \\
&=  \int_{\fR^d}  \mathrm{e}^{\mathrm{i} x \cdot \xi} \int_0^t \left(-a^{ij}(s)\xi^i\xi^j+\mathrm{i}b^i(s)\xi^i+c(s) \right)\cF[u(s,\cdot)](\xi) \overline{\cF[\psi^{(\varepsilon)}](\xi)} ds  \mathrm{d}\xi\\ 
&\quad + \int_{\fR^d} \mathrm{e}^{\mathrm{i} x \cdot \xi} \int_0^t  \cF[f(s,\cdot)] \overline{\cF[\psi^{(\varepsilon)}](\xi)} ds \mathrm{d}\xi.  
\end{align*}
By applying the Fourier inversion theorem, we can strip away the outer integral against $\mathrm{e}^{\mathrm{i} x \cdot \xi}$ to establish a pointwise equality valid for almost every $\xi \in \fR^d$,
\begin{align*}
&\cF[u(t,\cdot)](\xi) \overline{\cF[\psi^{(\varepsilon)}(x-\cdot)](\xi)} \\
&= \int_0^t \left(-a^{ij}(s)\xi^i\xi^j+\mathrm{i}b^i(s)\xi^i+c(s) \right)\cF[u(s,\cdot)](\xi) ds  \overline{\cF[\psi^{(\varepsilon)}](\xi)}\\
&\quad +\int_0^t  \cF[f(s,\cdot)]  ds \overline{\cF[\psi^{(\varepsilon)}](\xi)}.
\end{align*}

\vspace{2mm}
{\bf IV. Passing to the Limit}
\vspace{2mm}

Finally, we analyze the behavior as the scaling parameter vanishes. Observe that $\overline{\cF[\psi^{(\varepsilon)}](\xi)} = \overline{\cF[\psi](\varepsilon\xi)}$. 
Because of our initial normalization, the value at the origin is strictly $\overline{\cF[\psi](0)} = 1$. 
By evaluating along a sequence $\varepsilon = 1/n$ and taking the limit as $n \to \infty$, the multiplier $\overline{\cF[\psi](\varepsilon\xi)}$ converges to 1. 
This limiting process directly yields the desired identity \eqref{20240908 40}.
\end{proof}

\begin{remark}
The pointwise requirement in condition $(i)$ of Lemma \ref{pre representation} could technically be relaxed to hold merely for almost every $t$, such that
$$
\|u(t,\cdot)\|_{\mathrm{L}_{p}(\fR^d)} < \infty.
$$
Nevertheless, because the solutions we examine are inherently continuous in time, this subtle theoretical distinction is of little practical consequence in our context.
\end{remark}

\begin{proposition}[Representation of a weak solution]
										\label{prop repre}

Let the hypotheses of Lemma \ref{pre representation} be satisfied. 
Furthermore, assume that the following initial asymptotic condition holds for almost every frequency $\xi \in \fR^d$ and all $t \in (0,T)$,
\begin{align}
									\label{20240909 99}
\lim_{\varepsilon \downarrow 0}\exp\left(\int_\varepsilon^t\left(-a^{ij}(s)\xi^i\xi^j+c(s) \right)\mathrm{d}s  \right) \cF[u(\varepsilon,\cdot)](\xi)=0.
\end{align}
Under these conditions, the Fourier transform of the solution admits the following explicit integral representation for almost every $\xi \in \fR^d$ and all $t \in [0,T)$,
\begin{align}
										\notag
&  \cF[u(t, \cdot)](\xi) \\
										\label{20240909 19}
&=\int_0^t\exp\left( \int_s^t\left(-a^{ij}(r)\xi^i\xi^j+ib^i(r)\xi^i+c(r) \right)\mathrm{d}r\right) \cF[f(s,\cdot)](\xi) \mathrm{d}s.
\end{align}
\end{proposition}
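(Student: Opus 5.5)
The plan is to fix a frequency $\xi$ outside a null set and reduce the claim to a scalar linear ODE in $t$. Lemma \ref{pre representation} gives the integral identity \eqref{20240908 40-2}, and the argument is variation of constants on intervals $[\varepsilon,t]$ with $\varepsilon>0$. On such intervals the coefficients are integrable by Assumptions \ref{main as 1}--\ref{main as 3}. The final step lets $\varepsilon\downarrow 0$ using \eqref{20240909 99}.

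First I choose the exceptional null set. I take $\xi$ outside the union of three null sets:
\begin{itemize}
\item the set where \eqref{20240908 40-2} fails;
\item the set where the continuity in condition (iii) of Lemma \ref{pre representation} fails;
\item the set where \eqref{20240909 99} fails.
\end{itemize}
I also remove the set where $\int_0^{T'}|\cF[f(s,\cdot)](\xi)|\,\mathrm{d}s=\infty$ for some rational $T'$. This last set is null: by \eqref{Fourier est} and Tonelli, $\int_{|\xi|<R}\int_0^{T'}|\cF[f(s,\cdot)](\xi)|\,\mathrm{d}s\,\mathrm{d}\xi\lesssim_R\int_0^{T'}\|f(s,\cdot)\|_{\mathrm{L}_p}\mathrm{d}s<\infty$. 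I define $g(s)=\cF[u(s,\cdot)](\xi)$ and $k(s)=-a^{ij}(s)\xi^i\xi^j+\mathrm{i}b^i(s)\xi^i+c(s)$. Then for $0<\varepsilon\le t<T$, subtracting \eqref{20240908 40-2} at $\varepsilon$ from the same identity at $t$ gives
$$
g(t)=g(\varepsilon)+\int_\varepsilon^t k(s)g(s)\,\mathrm{d}s+\int_\varepsilon^t\cF[f(s,\cdot)](\xi)\,\mathrm{d}s .
$$
Here $k\in \mathrm{L}_1(\varepsilon,t)$. The term $kg$ is integrable on $(0,t)$ by \eqref{20240908 20-2}, after a further Fubini argument that discards a null set of $\xi$. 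Hence $g$ is absolutely continuous on $[\varepsilon,t]$, with $g'=kg+\cF[f(s,\cdot)](\xi)$ a.e.

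Next I set $E(s)=\exp(-\int_\varepsilon^s k(r)\,\mathrm{d}r)$, which is absolutely continuous on $[\varepsilon,t]$. By the product rule for absolutely continuous functions, $(Eg)'=E\,\cF[f(s,\cdot)](\xi)$ a.e. Integrating and multiplying by $E(t)^{-1}$ yields
$$
g(t)=\exp\Big(\int_\varepsilon^t k\Big)g(\varepsilon)+\int_\varepsilon^t\exp\Big(\int_s^t k(r)\,\mathrm{d}r\Big)\cF[f(s,\cdot)](\xi)\,\mathrm{d}s .
$$
To pass to the limit, note that $|\exp(\mathrm{i}\int_\varepsilon^t b^i\xi^i)|=1$. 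So the modulus of the first term equals the quantity in \eqref{20240909 99}, which tends to $0$ as $\varepsilon\downarrow0$.

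The main obstacle is the meaning of the $s$-integral down to $0$. Since $c$ need not be integrable near $0$, the factor $\exp(\int_s^t c)$ may blow up as $s\downarrow0$, and absolute integrability is not clear. I would therefore read \eqref{20240909 19} as the limit $\lim_{\varepsilon\downarrow0}\int_\varepsilon^t$. The displayed identity shows that this limit exists and equals $g(t)$. When $c$ is integrable near $0$, I would add that $|\exp(\int_s^t k)|\le \exp(\int_0^t|c|)$, so the integrand is dominated by a multiple of $|\cF[f(s,\cdot)](\xi)|$. In that case dominated convergence makes the integral a genuine Lebesgue integral. Finally, at $t=0$ both sides vanish, by continuity of $g$ and the limiting interpretation.
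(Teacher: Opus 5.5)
Your proposal is correct and follows essentially the same route as the paper. Both arguments rewrite the identity from Lemma \ref{pre representation} on $[\varepsilon,t]$, solve the resulting linear ODE with the integrating factor $\exp(-\int_\varepsilon^t k)$, and let $\varepsilon\downarrow0$ using \eqref{20240909 99}. Your extra points make explicit what the paper's proof leaves implicit: that $|\exp(\mathrm{i}\int_\varepsilon^t b^i(r)\xi^i\,\mathrm{d}r)|=1$, so \eqref{20240909 99} does remove the boundary term, and that without integrability of $c$ at $0$ the integral in \eqref{20240909 19} must be read as $\lim_{\varepsilon\downarrow0}\int_\varepsilon^t$, since it can converge only conditionally.
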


\begin{proof}

To facilitate clarity, we organize the proof into three parts.

\vspace{2mm}
{\bf I. Establishing the Frequency-Domain ODE }
\vspace{2mm}

By invoking Lemma \ref{pre representation}, we know that for almost every frequency $\xi \in \fR^d$, the following integral equation holds for all $t \in (0,T)$:
\begin{align}
									\notag
\cF[u(t,\cdot)](\xi) 
&= \int_0^t \left(-a^{ij}(s)\xi^i\xi^j+ib^i(s)\xi^i+c(s) \right)\cF[u(s,\cdot)](\xi) \mathrm{d}s  \\
									\label{20240909 90}
&\quad +\int_0^t \cF[f(s,\cdot)](\xi) \mathrm{d}s.
\end{align}
By isolating the integration over the interval $[\varepsilon, t]$, we can rewrite this for any $0 < \varepsilon \leq t < T$ as
$$
\begin{aligned}
\cF[u(t,\cdot)](\xi) 
&= \cF[u(\varepsilon,\cdot)](\xi) +\int_\varepsilon^t \left(-a^{ij}(s)\xi^i\xi^j+\mathrm{i}b^i(s)\xi^i+c(s) \right)\cF[u(s,\cdot)](\xi) \mathrm{d}s \\
&\quad +\int_\varepsilon^t \cF[f(s,\cdot)](\xi) \mathrm{d}s.
\end{aligned}
$$
Now, fix a specific $\xi \in \fR^d$ such that both the limit condition \eqref{20240909 99} and the integral representation above are satisfied. Because $\cF[u(t,\cdot)](\xi)$ is expressed as the integral of locally integrable functions, it is absolutely continuous on $[0,T']$ for any $T' \in (0,T)$. Differentiating with respect to $t$ yields a first-order linear ordinary differential equation valid for almost every $t \in (0,T)$:
$$
\cF[u(t,\cdot)]_t(\xi) 
= \left(-a^{ij}(t)\xi^i\xi^j+\mathrm{i}b^i(t)\xi^i+c(t) \right)\cF[u(t,\cdot)](\xi) + \cF[f(t,\cdot)](\xi).
$$

\vspace{2mm}
{\bf II. Applying the Integrating Factor }
\vspace{2mm}

To solve this differential equation on the interval $[\varepsilon, T']$, we introduce the corresponding integrating factor. Because the exponent is absolutely continuous, the product
$$
\exp\left(- \int_\varepsilon^t\left(-a^{ij}(s)\xi^i\xi^j+\mathrm{i}b^i(s)\xi^i+c(s) \right)\mathrm{d}s \right) \cF[u(t, \cdot)](\xi)
$$
is also absolutely continuous on $[\varepsilon, T']$ and differentiable for almost every $t \in [\varepsilon,T)$. Applying the product rule of differentiation directly yields
$$
\begin{aligned}
&\frac{\mathrm{d}}{\mathrm{d}t} \left(\exp\left(- \int_\varepsilon^t\left(-a^{ij}(s)\xi^i\xi^j+\mathrm{i}b^i(s)\xi^i+c(s) \right)\mathrm{d}s \right) \cF[u(t, \cdot)](\xi) \right) \\
&= \exp\left(- \int_\varepsilon^t\left(-a^{ij}(s)\xi^i\xi^j+\mathrm{i}b^i(s)\xi^i+c(s) \right)\mathrm{d}s\right) \cF[f(t,\cdot)](\xi) \quad \text{a.e.}~ t \in [\varepsilon,T).
\end{aligned}
$$

\vspace{2mm}
{\bf III. Integration and Taking the Limit }
\vspace{2mm}

By the Fundamental Theorem of Calculus, we integrate this derivative from $\varepsilon$ to $t$ and obtain
$$
\begin{aligned}
&\exp\left(- \int_\varepsilon^t\left(-a^{ij}(s)\xi^i\xi^j+\mathrm{i}b^i(s)\xi^i+c(s) \right)\mathrm{d}s \right) \cF[u(t, \cdot)](\xi) \\
&= \cF[u(\varepsilon,\cdot)](\xi) + \int_\varepsilon^t\exp\left(- \int_\varepsilon^s\left(-a^{ij}(r)\xi^i\xi^j+\mathrm{i}b^i(r)\xi^i+c(r) \right)\mathrm{d}r\right) \cF[f(s,\cdot)](\xi) \mathrm{d}s.
\end{aligned}
$$
Multiplying both sides by the inverse integrating factor isolates $\cF[u(t, \cdot)](\xi)$, \textit{i.e.}
$$
\begin{aligned}
\cF[u(t, \cdot)](\xi) 
&= \exp\left(\int_\varepsilon^t\left(-a^{ij}(s)\xi^i\xi^j+\mathrm{i}b^i(s)\xi^i+c(s) \right)\mathrm{d}s \right) \cF[u(\varepsilon,\cdot)](\xi) \\
&\quad +\int_\varepsilon^t\exp\left( \int_s^t\left(-a^{ij}(r)\xi^i\xi^j+\mathrm{i}b^i(r)\xi^i+c(r) \right)\mathrm{d}r\right) \cF[f(s,\cdot)](\xi) \mathrm{d}s.
\end{aligned}
$$
Finally, we take the limit as $\varepsilon \downarrow 0$. Invoking the vanishing initial condition \eqref{20240909 99}, the first term on the right-hand side drops out entirely. This directly yields the target representation \eqref{20240909 19}, completing the proof.
\end{proof}

\begin{remark}
Formal manipulation of \eqref{20240909 19} suggests that $u(t,x)$ can be represented through the inverse Fourier transform:
$$
u(t,x) := \cF_{\xi}^{-1}\left[\int_0^t \exp\left( \int_s^t \left(-a^{ij}(r)\xi^i\xi^j + \mathrm{i}b^i(r)\xi^i + c(r) \right) \mathrm{d}r \right) \cF f(s,\cdot) \, \mathrm{d}s \right](x).
$$
Despite its validity as a distribution, the local integrability of $u$ remains uncertain—especially in the presence of degenerate coefficients $a^{ij}$. 
To ensure we obtain a solution with the required functional properties, we depart from the transform method and adopt a stochastic approach based on It\^o's stochastic calculus to achieve a rigorous construction.
\end{remark}

As a corollary of this integral representation, the uniqueness of the weak solution to \eqref{main eqn 1} is readily established.

\begin{corollary}[Uniqueness of a weak solution]
											\label{Fourier unique}
Let $p \in [1,2]$ and assume $f$ is locally integrable on $[0,T) \times \fR^d$. 
A weak solution $u$ to equation \eqref{main eqn 1} is unique within the class of functions satisfying the following four criteria:
\begin{enumerate}[(i)]
\item{(Spatial Integrability)} For any $t \in (0,T)$,
\begin{align*}
\|u(t,\cdot)\|_{\mathrm{L}_{p}(\fR^d)} < \infty.
\end{align*}
\item{(Weighted Fourier Integrability)} For any $T' \in (0,T)$, 
\begin{align}
										\label{20240908 20}
\int_0^{T'}\mu_{a,b,c}(t)\|\cF[u(t,\cdot)]\|_{\mathrm{L}_{p'}(\fR^d)} \mathrm{d}t 
< \infty,
\end{align}
where $p'$ is the H\"older conjugate of $p$, \textit{i.e.} $p'=\frac{p}{p-1}$ and $\frac{1}{0}:=\infty$.
\item{(Temporal Continuity in Frequency)} For almost every $\xi \in \fR^d$, the mapping $t \mapsto \cF[u(t,\cdot)](\xi)$ is continuous on $[0,T)$.
\item{(Initial Vanishing Condition)} For almost every $\xi \in \fR^d$ and all $t \in (0,T)$,
\begin{align*}
\lim_{\varepsilon \downarrow 0}\exp\left(\int_\varepsilon^t\left(-a^{ij}(s)\xi^i\xi^j+c(s) \right)\mathrm{d}s  \right) \cF[u(\varepsilon,\cdot)](\xi)=0.
\end{align*}
\end{enumerate}
\end{corollary}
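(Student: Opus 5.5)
The plan is to reduce uniqueness to the homogeneous problem and then invoke the explicit Fourier representation of Proposition \ref{prop repre}. Let $u_1$ and $u_2$ be two weak solutions of \eqref{main eqn 1} with the same data $f$, both satisfying criteria (i)--(iv), and set $v := u_1 - u_2$.

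First I verify that $v$ inherits the hypotheses.
\begin{itemize}
\item The weak formulation \eqref{20240909 70} is linear in the pair $(u,f)$. Subtracting the two identities, the $f$-terms cancel exactly, so $v$ is a weak solution of \eqref{main eqn 1} with zero source. This step is legitimate because each integral in \eqref{20240909 70} is finite for $u_1$ and $u_2$ separately.
\item Criterion (i) passes to $v$ by the triangle inequality in $\mathrm{L}_p(\fR^d)$.
\item Criterion (ii) passes to $v$ by linearity of $\cF$ and the triangle inequality in $\mathrm{L}_{p'}(\fR^d)$, multiplied by the weight $\mu_{a,b,c}$.
\item Criterion (iii) holds for $v$ since a difference of continuous functions is continuous; the exceptional null sets of $\xi$ for $u_1$ and $u_2$ are simply united.
\item Criterion (iv) holds for $v$ because the limit is linear in $\cF[u(\varepsilon,\cdot)](\xi)$ and both limits vanish, again off a union of two null sets.
\end{itemize}

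Second, I apply Lemma \ref{pre representation} and Proposition \ref{prop repre} to $v$ with source $f \equiv 0$, which trivially lies in $\mathrm{L}_{1,p}((0,T)\times\fR^d)$. Representation \eqref{20240909 19} then gives $\cF[v(t,\cdot)](\xi)=0$ for almost every $\xi$ and all $t\in[0,T)$.

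One point needs care here. Condition \eqref{20240909 99} in the proposition is stated without the drift term $\mathrm{i}b^i\xi^i$, whereas the integrating factor used in its proof contains it. Since $|\exp(\mathrm{i}\int_\varepsilon^t b^i(s)\xi^i\,\mathrm{d}s)|=1$, the two versions of the vanishing condition are equivalent. So criterion (iv) is exactly what kills the boundary term as $\varepsilon\downarrow0$.

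Finally, I recover $v=0$ from its Fourier transform. Fix $t$. Since $v(t,\cdot)\in\mathrm{L}_p(\fR^d)$ with $p\in[1,2]$, Lemma \ref{Plancherel lemma} gives, for every $\varphi\in \mathrm{C}_c^\infty(\fR^d)$,
\[
\int_{\fR^d} v(t,x)\varphi(x)\,\mathrm{d}x=\int_{\fR^d}\cF[v(t,\cdot)](\xi)\overline{\cF[\varphi](\xi)}\,\mathrm{d}\xi=0 .
\]
Hence $v(t,\cdot)=0$ almost everywhere for each $t$, so $u_1=u_2$.

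I expect no serious obstacle, since the heavy lifting is done in Lemma \ref{pre representation}. The only delicate points are checking that the hypotheses of the lemma are stated for locally integrable $f$ here, versus $f\in \mathrm{L}_{1,p}$ there, which is harmless because the difference has zero source, and the drift-factor modulus remark above.
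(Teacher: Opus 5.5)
Your proposal is correct and follows the paper's proof: subtract the two solutions, note that the difference solves the homogeneous problem and stays in the same class, apply Proposition \ref{prop repre} with $f\equiv 0$ to get $\cF[v(t,\cdot)]=0$, and conclude $v=0$. Two of your steps are slightly more careful than the paper's terse version without changing the argument: you recover $v(t,\cdot)=0$ through Lemma \ref{Plancherel lemma} where the paper cites ``Fourier inversion,'' and you observe that the drift factor $\exp(\mathrm{i}\int_\varepsilon^t b^i(s)\xi^i\,\mathrm{d}s)$ has modulus one, so criterion (iv) is exactly the condition needed in the proposition.
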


\begin{proof}
Suppose $u_1$ and $u_2$ are two weak solutions to \eqref{main eqn 1} that both reside in the previously defined function class. 
By the linearity of the operator, their difference $u := u_1 - u_2$ is a weak solution to the following homogeneous problem
$$
\begin{cases} 
u_t(t,x) = a^{ij}(t)u_{x^ix^j}(t,x) + b^i(t)u_{x^i}(t,x) + c(t)u(t,x), & (t,x) \in (0,T) \times \fR^d \\
u(0,x) = 0.
\end{cases}
$$
Since $u$ also satisfies the class conditions, Proposition \ref{prop repre} implies that for a vanishing source term $f \equiv 0$, the Fourier transform must vanish, \textit{i.e.}
$$
\cF[u(t,\cdot)](\xi) = 0 \quad \text{for almost every } (t,\xi) \in (0,T) \times \fR^d.
$$
Applying the Fourier inversion theorem, we conclude that
$$
u(t,x) = 0 \quad \text{for almost every } (t,x) \in (0,T) \times \fR^d.
$$
Thus, $u_1(t,x) = u_2(t,x)$ in the sense of locally integrable functions on $[0,T) \times \fR^d$, establishing the uniqueness.
\end{proof}

Next, for the case $p=1$, we examine a set of sufficient conditions that satisfy the hypotheses of Corollary \ref{Fourier unique}. 
This allows us to derive new criteria that do not explicitly involve Fourier transforms.

\begin{corollary}
											\label{Fourier unique 2}
Let $f$ be a locally integrable function on $[0,T) \times \fR^d$. 
A weak solution $u$ to equation \eqref{main eqn 1} is unique within the class of functions that satisfy the following four criteria:
\begin{enumerate}[(i)]
\item{(Uniform Local Bound)} For any $T' \in (0,T)$,
\begin{align}
									\label{20240914 60}
\sup_{t \in [0,T']}\|u(t,\cdot)\|_{\mathrm{L}_{1}(\fR^d)} < \infty.
\end{align}
\item{(Weighted Integrability)} For any $T' \in (0,T)$, 
\begin{align*}
\int_0^{T'}\mu_{a,b,c}(t)\|u(t,\cdot)\|_{\mathrm{L}_{1}(\fR^d)} \mathrm{d}t 
< \infty.
\end{align*}
\item{(Pointwise Temporal Continuity)} For almost every $x \in \fR^d$, the mapping $t \in (0,T) \mapsto u(t,x)$ is continuous on $[0,T)$.
\item{(Initial Vanishing Condition)} For all $t \in (0,T)$,
\begin{align*}
\lim_{\varepsilon \downarrow 0}\exp\left(\int_\varepsilon^t\mu_{a,b,c}(s)\mathrm{d}s  \right) \|u(\varepsilon,\cdot)\|_{\mathrm{L}_1(\fR^d)}=0.
\end{align*}
\end{enumerate}

\end{corollary}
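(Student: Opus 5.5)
The plan is to reduce Corollary \ref{Fourier unique 2} to Corollary \ref{Fourier unique} with $p=1$, so $p'=\infty$. Let $u$ be a weak solution in the class (i)–(iv) of the present statement. We check that $u$ satisfies the four criteria of Corollary \ref{Fourier unique}. Uniqueness then follows, since the difference of two such solutions again lies in the class: all four conditions are stable under subtraction, including (iv), by the triangle inequality.

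Criteria (i) and (ii) of Corollary \ref{Fourier unique} are immediate. Criterion (i) follows from \eqref{20240914 60}. For (ii), the uniform bound $\|\cF[u(t,\cdot)]\|_{\mathrm{L}_\infty(\fR^d)} \leq (2\pi)^{-d/2}\|u(t,\cdot)\|_{\mathrm{L}_1(\fR^d)}$ gives
$$
\int_0^{T'}\mu_{a,b,c}(t)\|\cF[u(t,\cdot)]\|_{\mathrm{L}_\infty(\fR^d)}\mathrm{d}t \lesssim \int_0^{T'}\mu_{a,b,c}(t)\|u(t,\cdot)\|_{\mathrm{L}_1(\fR^d)}\mathrm{d}t<\infty,
$$
which is condition (ii) here.

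For the temporal continuity (iii), fix $\xi$ and $t_n\to t$ in $[0,T')$. Then $\mathrm{e}^{-\mathrm{i}x\cdot\xi}u(t_n,x)\to \mathrm{e}^{-\mathrm{i}x\cdot\xi}u(t,x)$ for a.e.\ $x$, by pointwise continuity. Pointwise convergence together with only a uniform $\mathrm{L}_1$ bound does not give $\mathrm{L}_1$ convergence, so dominated convergence is not directly available; this is the main obstacle. I would try two routes.
\begin{itemize}
\item First, obtain continuity of $t\mapsto(u(t,\cdot),\varphi)$ for each $\varphi\in \mathrm{C}_c^\infty(\fR^d)$ from the weak formulation. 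The right-hand side of \eqref{20240909 70} is continuous in $t$, because $\mu_{a,b,c}\|u\|_{\mathrm{L}_1}$ is integrable by (ii) and $f$ is locally integrable.
\item Then use the uniform $\mathrm{L}_1$ bound (i) to approximate $\mathrm{e}^{-\mathrm{i}x\cdot\xi}$ by $\mathrm{e}^{-\mathrm{i}x\cdot\xi}\chi_R(x)$ with a cutoff $\chi_R$. This still needs uniform smallness of $\int_{|x|>R}|u(t_n,x)|\,\mathrm{d}x$ (tightness), which (i) alone does not give.
\end{itemize}
If tightness is unavailable, the fallback is to note that the proof of Lemma \ref{pre representation} only uses (iii) to pass from ``for each fixed $t$, a.e.\ $\xi$'' to ``a.e.\ $\xi$, all $t$''. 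Identity \eqref{20240908 40} already holds for each $t$ and a.e.\ $\xi$, and its right-hand side is continuous in $t$. So it defines a continuous version that agrees with $\cF[u(t,\cdot)](\xi)$ for a.e.\ $(t,\xi)$, and I would run Proposition \ref{prop repre} with this version. In that case the $\varepsilon\downarrow 0$ limit in (iv) must be taken along a.e.\ $\varepsilon$, or justified at every $\varepsilon$ through the weak continuity of the first route.

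For the initial vanishing condition (iv), note that $\xi^i\xi^j a^{ij}(s)\geq 0$ and $c(s)\leq|c(s)|\leq\mu_{a,b,c}(s)$. Hence
$$
\left|\exp\Big(\int_\varepsilon^t(-a^{ij}\xi^i\xi^j+c)\,\mathrm{d}s\Big)\cF[u(\varepsilon,\cdot)](\xi)\right| \leq (2\pi)^{-d/2}\exp\Big(\int_\varepsilon^t\mu_{a,b,c}(s)\,\mathrm{d}s\Big)\|u(\varepsilon,\cdot)\|_{\mathrm{L}_1(\fR^d)},
$$
and the right-hand side tends to $0$ by hypothesis (iv) here, uniformly in $\xi$. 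With all four criteria verified, Corollary \ref{Fourier unique} gives uniqueness.
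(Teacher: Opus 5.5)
Your reduction to Corollary~\ref{Fourier unique} with $p=1$ is exactly what the paper's one-line proof asserts, and so are your checks of its criteria (i), (ii) and (iv); the paper attributes all four criteria to \eqref{Fourier est} and dominated convergence. You differ from the paper on criterion (iii), and your objection there is correct. A uniform $\mathrm{L}_1$ bound together with a.e.\ continuity in $t$ gives no $t$-independent integrable majorant, so dominated convergence alone does not yield continuity of $t\mapsto\cF[u(t,\cdot)](\xi)$.

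Your fallback is a valid repair, provided you run it on $u=u_1-u_2$, so that $f\equiv 0$. This matters because Lemma~\ref{pre representation} requires $f\in\mathrm{L}_{1,p}$, and for merely locally integrable $f$ the transform $\cF[f(s,\cdot)]$ need not exist. Write $m(s,\xi)=-a^{ij}(s)\xi^i\xi^j+\mathrm{i}b^i(s)\xi^i+c(s)$ and $V(t,\xi)=\int_0^t m(s,\xi)\cF[u(s,\cdot)](\xi)\,\mathrm{d}s$. Then $V$ is absolutely continuous in $t$ by (ii), and it agrees with $\cF[u(t,\cdot)](\xi)$ for a.e.\ $(t,\xi)$. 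Hence for a.e.\ $\xi$ it satisfies $V(t,\xi)=\int_0^t m(s,\xi)V(s,\xi)\,\mathrm{d}s$, and the integrating-factor argument of Proposition~\ref{prop repre} applies. Letting $\varepsilon\downarrow0$ only along the full-measure set where $V(\varepsilon,\xi)=\cF[u(\varepsilon,\cdot)](\xi)$ is legitimate, because hypothesis (iv) is a limit over all $\varepsilon$. So commit to that version and drop the weak-continuity alternative, which runs into the same tightness problem. The cost is that you reopen Proposition~\ref{prop repre} instead of citing Corollary~\ref{Fourier unique}.

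A shorter fix keeps the corollary a black box. For $p=1$, both sides of \eqref{20240908 40} are continuous in $\xi$. For the right-hand side this follows by dominated convergence, with majorant $C_K\mu_{a,b,c}(s)\|u(s,\cdot)\|_{\mathrm{L}_1(\fR^d)}$ for $\xi$ in a compact set $K$. So the identity holds for every $t$ and \emph{every} $\xi$, and $t\mapsto\cF[u(t,\cdot)](\xi)$ is itself absolutely continuous. Equivalently, test the weak formulation of $u_1-u_2$ with $\mathrm{e}^{-\mathrm{i}x\cdot\xi}\chi(x/R)$ and let $R\to\infty$. Dominated convergence is then needed only at a fixed time, never across times. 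This is why tightness never enters, and why hypothesis (iii) of the statement is in fact superfluous.
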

\begin{proof}
Let $u$ be a weak solution to \eqref{main eqn 1} satisfying the aforementioned conditions. 
To establish uniqueness, it is sufficient to verify that $u$ fulfills the criteria of Corollary \ref{Fourier unique}, \textit{i.e.}
\begin{enumerate}[(i)]
\item For any $t \in (0,T)$,
\begin{align*}
\|u(t,\cdot)\|_{\mathrm{L}_{1}(\fR^d)} < \infty.
\end{align*}
\item For any $T' \in (0,T)$, 
\begin{align*}
\int_0^{T'}\mu_{a,b,c}(t)\|\cF[u(t,\cdot)]\|_{\mathrm{L}_{\infty}(\fR^d)} \mathrm{d}t 
< \infty,
\end{align*}
\item For almost every $\xi \in \fR^d$, the mapping $t \mapsto \cF[u(t,\cdot)](\xi)$ is continuous on $[0,T)$.
\item For almost every $\xi \in \fR^d$,
\begin{align*}
\lim_{\varepsilon \downarrow 0}\exp\left(\int_\varepsilon^t\left(-a^{ij}(s)\xi^i\xi^j+c(s) \right)\mathrm{d}s  \right) \cF[u(\varepsilon,\cdot)](\xi)=0 \quad \forall t \in (0,T).
\end{align*}
\end{enumerate}
We omit the specific details of this verification, as they follow directly from the Fourier estimate \eqref{Fourier est} and the Lebesgue Dominated Convergence Theorem.
\end{proof}
\begin{remark}
										\label{Fourier conti remark}
We observe that in the case $p=1$, the mapping $\xi \mapsto \cF[u(t,\cdot)](\xi)$ is continuous for all $\xi \in \fR^d$. 
However, this continuity is generally lost when $p \in (1,2]$. 
As a result, for this higher $p$ range, the relaxed conditions derived in Corollary \ref{Fourier unique 2} do not appear sufficient to replace the original assumptions of Corollary \ref{Fourier unique}.
\end{remark}

\begin{remark}
The function class defined in Corollary \ref{Fourier unique 2} for the uniqueness of weak solutions to \eqref{main eqn 1} is not necessarily optimal. 
While Corollary \ref{Fourier unique} demonstrates that this class can be broadened by utilizing the Fourier transform, Corollary \ref{Fourier unique 2} remains valuable because it provides criteria independent of spectral analysis. 
Nevertheless, even without relying on the Fourier transform, further expansion of this class is possible; specifically, the uniform bound in \eqref{20240914 60} is not strictly required. 
Although \eqref{20240914 60} is typically used to ensure the temporal continuity of $t \mapsto \cF[u(t,\cdot)](\xi)$, this requirement can be bypassed by leveraging the inherent continuity of the $\mathrm{L}_p(\fR^d)$-Fourier transform alongside the structural link between $u$ and $f$. 
Following the introduction of a preliminary lemma, we provide a rigorous formulation of this result in Theorem \ref{p1 thm}.
\end{remark}

\begin{lemma}
										\label{Fourier taking lemma}
Let $p \in [1,2]$ and $f \in \mathrm{L}_{1,p,loc}((0,T) \times \fR^d)$, and suppose that Assumptions \ref{main as 1} - \ref{main as 3} hold. 
Furthermore, assume that the source term $f$ satisfies the integrability condition 
\begin{align}
									\label{20240914 99}
\int_0^t\mathrm{e}^{\int_s^t c(r)\mathrm{d}r}\|f(t,\cdot)\|_{\mathrm{L}_p(\fR^d)}\mathrm{d}t < \infty.
\end{align} 
Define the function $u$ via the probabilistic representation:
$$
u(t,x) := \int_0^t \mathrm{e}^{\int_s^t c(r)\mathrm{d}r}\bE\left[f\left(s,x + X_{s,t}\right)  \right] \mathrm{d}s,
$$
where the stochastic process $X_{s,t}$ is given by $X_{s,t}=\int_s^tb(r)\mathrm{d}r+\int_s^t \sigma(r) \mathrm{dB}_r$ with $\sigma(r)= \sqrt{2A(r)}$, $b(t)=(b^1(t),\ldots, b^d(t))$, and $A(r)= (a^{ij}(r))$. Then, for each $t \in (0,T)$, the Fourier transform of $u$ satisfies
\begin{align}
										\notag
&\cF[u(t,\cdot)](\xi)  \\
										\label{20240910 21}
&= \int_0^t \exp\left( \int_s^t\left(-a^{ij}(r)\xi^i\xi^j+ib^i(r)\xi^i+c(r) \right)\mathrm{d}r\right) \cF\left[f\left(s, \cdot \right)\right](\xi)\mathrm{d}s \quad \text{a.e.}~ \xi \in \fR^d.
\end{align}
In the specific case where $p=1$, this identity \eqref{20240910 21} is valid for all $(t,\xi) \in (0,T) \times \fR^d$.

\end{lemma}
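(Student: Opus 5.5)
The plan is to compute the Fourier transform of the probabilistic representation directly, first for $p=1$ and then for $p\in(1,2]$ by density. Throughout, fix $t\in(0,T)$.

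\textbf{The case $p=1$.} For each $s\in(0,t)$, the function $f(s,\cdot)\in\mathrm{L}_1(\fR^d)$. By Fubini's theorem on $\Omega\times\fR^d$ and translation invariance,
\[
\cF\bigl[\bE[f(s,\cdot+X_{s,t})]\bigr](\xi)=\bE\bigl[\mathrm{e}^{\mathrm{i}\xi\cdot X_{s,t}}\bigr]\,\cF[f(s,\cdot)](\xi).
\]
Since the integrands $b$ and $\sigma$ are deterministic, $X_{s,t}$ is Gaussian. Its mean is $\int_s^t b(r)\mathrm{d}r$ and its covariance is $\int_s^t \sigma\sigma^{T}(r)\mathrm{d}r=2\int_s^t A(r)\mathrm{d}r$; both are finite because $0<s\le t<T$ and the coefficients are locally integrable inside $(0,T)$. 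Hence
\[
\bE\bigl[\mathrm{e}^{\mathrm{i}\xi\cdot X_{s,t}}\bigr]=\exp\Bigl(\int_s^t\bigl(\mathrm{i}b^i(r)\xi^i-a^{ij}(r)\xi^i\xi^j\bigr)\mathrm{d}r\Bigr).
\]
Next, I would exchange $\cF$ with the $\mathrm{d}s$-integral. This is justified by Fubini on $(0,t)\times\fR^d$, using the bound
\[
\int_0^t \mathrm{e}^{\int_s^t c}\,\bE\|f(s,\cdot+X_{s,t})\|_{\mathrm{L}_1}\,\mathrm{d}s=\int_0^t \mathrm{e}^{\int_s^t c}\,\|f(s,\cdot)\|_{\mathrm{L}_1}\,\mathrm{d}s<\infty,
\]
which is exactly \eqref{20240914 99} (the integrand there is meant to be $\|f(s,\cdot)\|$). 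Together these give \eqref{20240910 21} for every $\xi$. Since each side is a well-defined integral for all $\xi$, the identity holds pointwise for all $(t,\xi)$.

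\textbf{The case $p\in(1,2]$.} For $f(s,\cdot)\in\mathrm{L}_p$ the Fourier transform is no longer a pointwise integral, so I would approximate. Take $f_n(s,x)=f(s,x)\mathbf 1_{|x|<n}\mathbf 1_{|f(s,x)|<n}$, so that $f_n(s,\cdot)\in\mathrm{L}_1\cap\mathrm{L}_p$ and $|f_n|\le|f|$.
\begin{enumerate}[(i)]
\item Since $f_n(s,\cdot)$ lies in $\mathrm{L}_1$, the $\mathrm{L}_1$ argument applies to $f_n$ slice by slice, with one wrinkle: \eqref{20240914 99} gives integrability only of the $\mathrm{L}_p$ norm, not the $\mathrm{L}_1$ norm. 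I would therefore also truncate in time, setting $f_n\mathbf 1_{\|f_n(s)\|_{\mathrm{L}_1}\le n}$. Whether $\|f_n(s)\|_{\mathrm{L}_1}\le n$ holds depends on $s$, so this cutoff genuinely removes some times rather than being harmless, and the removed set must shrink as $n\to\infty$; dominated convergence still handles it.
\item The corresponding $u_n$ converges to $u(t,\cdot)$ in $\mathrm{L}_p$ by the Minkowski bound \eqref{20240909 82} and dominated convergence in $s$. The Hausdorff--Young inequality \eqref{Fourier est} then gives $\cF[u_n(t)]\to\cF[u(t)]$ in $\mathrm{L}_{p'}$.
\item On the right-hand side, the multiplier has modulus at most $\mathrm{e}^{\int_s^t c}$ because $a^{ij}\xi^i\xi^j\ge0$. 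Hence the right-hand side for $f_n$ converges in $\mathrm{L}_{p'}$ to the one for $f$, by generalized Minkowski, Hausdorff--Young, and \eqref{20240914 99}.
\end{enumerate}
Passing to an a.e.\ convergent subsequence yields \eqref{20240910 21} for a.e.\ $\xi$.

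The main obstacle is bookkeeping rather than any single estimate: making the $\mathrm{L}_{p'}$-valued Bochner integral of the right-hand side agree with the pointwise-in-$\xi$ integral. I would handle this by noting that for a.e.\ $\xi$ the scalar integral converges absolutely, by Minkowski in $\mathrm{L}_{p'}$ together with Tonelli, and that it coincides with the Bochner integral a.e. The same reasoning also makes the time truncation in (i) harmless.
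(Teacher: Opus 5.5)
Your route is essentially the paper's: truncate so each slice lies in $\mathrm{L}_1$, compute $\cF$ by Fubini together with the Gaussian characteristic function $\bE[\mathrm{e}^{\mathrm{i}\xi\cdot X_{s,t}}]=\exp\bigl(\int_s^t(\mathrm{i}b^i(r)\xi^i-a^{ij}(r)\xi^i\xi^j)\,\mathrm{d}r\bigr)$, then pass to the limit in $\mathrm{L}_{p'}$ via generalized Minkowski, Hausdorff--Young and $|\exp(\cdots)|\le \mathrm{e}^{\int_s^t c(r)\mathrm{d}r}$. Handling $p=1$ directly, with no truncation, is a small improvement. It gives the identity for every $\xi$ at once. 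The paper instead truncates for $p=1$ as well and then upgrades from a.e.\ $\xi$ to all $\xi$ by continuity, which also requires a dominated-convergence check on the right-hand side, not only on the left.

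Step (i) of your $p\in(1,2]$ argument, however, rests on a misdiagnosis, and your repair is not harmless. No time cutoff is needed. Since $f_n(s,\cdot)$ is supported in $B_n=\{|x|<n\}$, H\"older gives $\|f_n(s,\cdot)\|_{\mathrm{L}_1(\fR^d)}\le |B_n|^{1-1/p}\|f(s,\cdot)\|_{\mathrm{L}_p(\fR^d)}$. The weighted time integral of $\|f_n(s,\cdot)\|_{\mathrm{L}_1}$ is therefore finite directly by \eqref{20240914 99}; this is exactly Step II of the paper.

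Worse, the cutoff $\mathbf{1}_{\{\|f_n(s,\cdot)\|_{\mathrm{L}_1(\fR^d)}\le n\}}$ need not tend to $1$, so the removed set need not shrink. Take $d=3$, $p=2$, $c\equiv 0$, $a^{ij}=\delta^{ij}$, $b=0$, and $f(s,x)=(1+|x|)^{-8/5}$. Then $f(s,\cdot)\in\mathrm{L}_2(\fR^3)$ and $|f|\le 1$, but $\|f_n(s,\cdot)\|_{\mathrm{L}_1}$ is of order $n^{7/5}$ for every $s$. For all large $n$ the cut-off source vanishes identically, $u_n\equiv 0$, and the convergence $u_n\to u(t,\cdot)$ claimed in (ii) fails. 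Your closing remark about Bochner versus pointwise integrals concerns measurability, not this convergence issue.

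Delete the time cutoff; if you want one, use an independent level $N\to\infty$ with $n$ fixed. With that change, (ii), (iii) and the a.e.\ subsequence argument are correct as you wrote them.
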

\begin{proof}
Although this proof is highly technical, it can be decomposed into a clear, logical sequence. The primary objective is to introduce an approximate solution using spatial truncation, apply the Fourier transform by exploiting the properties of Gaussian processes, and ultimately take the limit as the truncation parameter approaches infinity.

\vspace{2mm}
{\bf I. Defining the Truncated Approximation}
\vspace{2mm}

First, we localize the source term by cutting it off outside a ball of radius $n$. 
For any natural number $n \in \fN$ and variables $(t,x) \in (0,T) \times \fR^d$, we define the truncated function
$$
f^n(t,x) := f(t,x) 1_{|x| < n}
$$
Using this, we define an approximate solution $u^n(t,x)$ via the stochastic representation
$$
u^n(t,x) := \int_0^t \mathrm{e}^{\int_s^t c(r)\mathrm{d}r}\bE\left[f^n\left(s,x + X_{s,t}\right) \right] \mathrm{d}s.
$$

\vspace{2mm}
{\bf II. Justifying the Fourier Transform}
\vspace{2mm}

To apply the Fourier transform to $u^n$, we must ensure it is sufficiently integrable. By applying H\"older's inequality, we can bound the $\mathrm{L}_1$-norm of the truncated function by the $\mathrm{L}_p$-norm of the original function $f$, \textit{i.e.}
$$
\int_0^t\mathrm{e}^{\int_s^t c(r)\mathrm{d}r}\|f^n(t,\cdot)\|_{\mathrm{L}_1(\fR^d)}\mathrm{d}t \lesssim_n \int_0^t\mathrm{e}^{\int_s^t c(r)\mathrm{d}r}\|f(t,\cdot)\|_{\mathrm{L}_p(\fR^d)}\mathrm{d}t < \infty.
$$
Because this integral is finite, Fubini's theorem permits us to apply the Fourier transform $\cF$ directly to both sides of our definition for $u^n$, passing the transform inside the expected value $\bE$, that is,
$$
\cF[u^n(t,\cdot)](\xi) = \int_0^t \mathrm{e}^{\int_s^t c(r)\mathrm{d}r}\bE\left[\cF[f^n\left(s,\cdot + X_{s,t}\right)](\xi) \right] \mathrm{d}s.
$$

\vspace{2mm}
{\bf III. Applying Gaussian Process Properties}
\vspace{2mm}

The shift in the spatial variable by the stochastic process $X_{s,t}$ translates to a phase shift in the Fourier domain. Taking the expectation of this phase shift yields the characteristic function of the Gaussian process:
$$
\bE\left[\cF[f^n\left(s,\cdot + X_{s,t}\right)](\xi) \right] = \cF[f^n(s,\cdot)](\xi) \exp\left( -\int_s^t a^{ij}(r) \mathrm{d}r \xi^i\xi^j +\mathrm{i}\int_s^t b^i(r) \mathrm{d}r\xi^i\right)
$$
Substituting this back into our equation for $\cF[u^n(t,\cdot)](\xi)$, we obtain an exact analytical expression for the Fourier transform of the approximate solution:
\begin{align}
										\label{20260312 01}
\cF[u^n(t,\cdot)](\xi) = \int_0^t \exp\left( \int_s^t\left(-a^{ij}(r)\xi^i\xi^j+\mathrm{i}b^i(r)\xi^i+c(r) \right)\mathrm{d}r\right) \cF\left[f^n\left(s, \cdot \right)\right](\xi)\mathrm{d}s.
\end{align}
This equality holds almost everywhere for $\xi \in \fR^d$.

\vspace{2mm}
{\bf IV. The Limit Procedure ($n \to \infty$)}
\vspace{2mm}

To find the representation for true solution, we must take the limit to both sides of \eqref{20260312 01} as $n \to \infty$. We measure the convergence in the $\mathrm{L}_{p'}$-space using the generalized Minkowski inequality and standard Fourier estimates.
The distance between the approximate and true solutions in the Fourier domain is bounded by the distance between $f^n$ and $f$ in the spatial domain:
$$
\|\cF[u^n(t,\cdot)] -\cF[u(t,\cdot)]\|_{\mathrm{L}_{p'}(\fR^d)} \lesssim \int_0^t \mathrm{e}^{\int_s^t c(r)\mathrm{d}r}\|f^n\left(s,\cdot\right)-f\left(s,\cdot\right)\|_{\mathrm{L}_p(\fR^d)}\mathrm{d}s
$$
When setting up the integral bound for the difference, we rely on the degenerate ellipticity condition ($a^{ij}(r)\xi^i\xi^j \geq 0$). 
This ensures that the exponential term related to the diffusion matrix is bounded by 1, allowing us to drop it from the inequality:
$$
\|\cF[u^n] - \cF[u]\|_{\mathrm{L}_{p'}(\fR^d)} \leq \int_0^t \mathrm{e}^{\int_s^t c(r)\mathrm{d}r} \|\cF[f^n(s,\cdot)] -\cF[f(s,\cdot)]\|_{\mathrm{L}_{p'}(\fR^d)} \mathrm{d}s
$$
Because $f^n \to f$ in $\mathrm{L}_p$, the right-hand side goes to zero as $n \to \infty$. 
Thus, the limiting case satisfies \eqref{20240910 21}.

\vspace{2mm}
{\bf V. Continuity of the Result for the case $p=1$}
\vspace{2mm}

Finally, we establish the continuity of the resulting limiting function with respect to the frequency variable $\xi$. 
This is achieved by demonstrating that $u$ has a bounded $\mathrm{L}_1$-norm. 
By applying the generalized Minkowski inequality alongside the probabilistic representation and \eqref{20240914 99}, we obtain
$$
\|u(t,\cdot)\|_{\mathrm{L}_1(\mathbf{R}^d)} \lesssim \int_0^t \exp\left( \int_s^t c(r) \mathrm{d}r\right) \|f(s, \cdot)\|_{\mathrm{L}_1(\mathbf{R}^d)}\mathrm{d}s < \infty.
$$
The finiteness of this $\mathrm{L}_1$-norm ensures that the Fourier transform is continuous in $\xi$ for any $t \in (0,T)$. 
Consequently, the relation \eqref{20240910 21} is valid for all $(t,\xi) \in (0,T) \times \mathbf{R}^d$.
\end{proof}

For the reader's convenience, we restate Theorem \ref{main thm 3-1} here.
\begin{theorem}[Existence and uniqueness of a weak solution]
														\label{p1 thm}
Let $f$ be a locally integrable function $[0,T) \times \fR^d$.
Suppose that Assumptions \ref{main as 1} - \ref{main as 3} hold.
Additionally, assume the following conditions:
\begin{enumerate}[(i)]
\item For any $0<s<t<T$,
\begin{align*}
\int_s^t \mathrm{e}^{\int_s^r c(\rho)\mathrm{d}\rho}\max_{i,j}|a^{ij}(r)| \mathrm{d}r < \infty.
\end{align*}
\item For any $t \in (0,T)$,
\begin{align*}
\int_0^t \mathrm{e}^{\int_s^t c(r)\mathrm{d}r} \|f(s,\cdot)\|_{\mathrm{L}_1(\fR^d)}\mathrm{d}s < \infty.
\end{align*}
\item For any $T' \in (0,T)$,
\begin{align*}
\int_0^{T'}  \mu_{a,b,c}(t) \int_0^t \mathrm{e}^{\int_s^t c(r)\mathrm{d}r}\|f(s,\cdot)\|_{\mathrm{L}_1(\fR^d)}\mathrm{d}s \mathrm{d}t  <\infty.
\end{align*}
\end{enumerate}
Then there exists a unique weak solution $u$ to \eqref{main eqn 1} in the class satisfying the following conditions:
\begin{enumerate}[(i)]
\item For almost every $t \in (0,T)$,
\begin{align*}
\|u(t,\cdot)\|_{\mathrm{L}_{1}(\fR^d)} < \infty.
\end{align*}
\item For any $T' \in (0,T)$, 
\begin{align}
										\label{20240908 20}
\int_0^{T'}\mu_{a,b,c}(t)\|\cF[u(t,\cdot)]\|_{\mathrm{L}_{\infty}(\fR^d)} \mathrm{d}t 
< \infty.
\end{align}
\item For almost every $\xi \in \fR^d$, the mapping $t \mapsto \cF[u(t,\cdot)](\xi)$ is continuous on $[0,T)$.
\item For almost every $\xi \in \fR^d$,
\begin{align*}
\lim_{\varepsilon \downarrow 0}\exp\left(\int_\varepsilon^t\left(-a^{ij}(s)\xi^i\xi^j+c(s) \right)\mathrm{d}s  \right) \cF[u(\varepsilon,\cdot)](\xi)=0 \quad \forall t \in (0,T).
\end{align*}
\end{enumerate}
Moreover, the solution $u$ satisfies the estimates that for all $t \in (0,T)$,
\begin{align*}
 \|u(t,\cdot)\|_{\mathrm{L}_p(\fR^d)}
\leq \int_0^{t} \mathrm{e}^{\int_s^t c(r)\mathrm{d}r} \|f(s,\cdot)\|_{\mathrm{L}_p(\fR^d)}\mathrm{d}s
\end{align*}
and for all $T' \in (0,T)$,
\begin{align*}
&\int_0^{T'} \mu_{a,b,c}(t) \|u(t,\cdot)\|_{\mathrm{L}_p(\fR^d)} \mathrm{d}t \\
&\leq \int_0^{T'} \mu_{a,b,c}(t) \int_0^t \mathrm{e}^{\int_s^t c(r)\mathrm{d}r} \|f(s,\cdot)\|_{\mathrm{L}_p(\fR^d)}\mathrm{d}s \mathrm{d}t.
\end{align*}
If additionally the coefficient $c(t)$ is locally integrable on $[0,T)$, \textit{i.e.}
\begin{align*}
\int_0^{T'} |c(t)| \mathrm{d}t < \infty \quad \forall T' \in (0,T),
\end{align*}
then the mapping $t \mapsto \|u(t,\cdot)\|_{\mathrm{L}_1(\fR^d)}$ is absolutely continuous and for all $T' \in (0,T)$, 
\begin{align*}
\sup_{t \in [0,T']} \mathrm{e}^{-\int_0^t c(r)\mathrm{d}r} \|u(t,\cdot)\|_{\mathrm{L}_1(\fR^d)}
\leq \int_0^{T'} \mathrm{e}^{-\int_0^s c(r)\mathrm{d}r} \|f(s,\cdot)\|_{\mathrm{L}_1(\fR^d)}\mathrm{d}s.
\end{align*}
\end{theorem}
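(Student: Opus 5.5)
Existence will come from Theorem \ref{stochastic weak existence} with $p=1$, and uniqueness from Corollary \ref{Fourier unique} with $p=1$. Let $u$ be the probabilistic function defined in \eqref{20240909 50}. Hypotheses (i)--(iii) of the present theorem are exactly those of Theorem \ref{stochastic weak existence} for $p=1$. One caveat: condition (ii) of that theorem uses $\mathrm{e}^{\int_s^t |c|}$, while here it is stated with $c$. I would either use (ii) as stated wherever only the $\mathrm{L}_1$ bounds and the Minkowski step are needed, or note that in the mollification argument only $\int_0^t \mathrm{e}^{\int_s^t c}\|f(s)\|_{\mathrm{L}_1}\,\mathrm{d}s<\infty$ is actually used. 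The theorem then gives that $u$ is a weak solution, together with the two bounds \eqref{20240909 82} and \eqref{20240909 51} for $p=1$. It also gives, under local integrability of $c$ on $[0,T)$, the absolute continuity of $t\mapsto \|u(t,\cdot)\|_{\mathrm{L}_1}$ and the supremum estimate \eqref{20250826 02}. Condition (i) of the class follows from \eqref{20240909 82}.

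The remaining work is to check that this $u$ lies in the uniqueness class, i.e.\ satisfies (ii)--(iv). For (ii), I use $\|\cF[u(t,\cdot)]\|_{\mathrm{L}_\infty}\le (2\pi)^{-d/2}\|u(t,\cdot)\|_{\mathrm{L}_1}$ together with \eqref{20240909 51} and hypothesis (iii). For (iii) and (iv), I invoke Lemma \ref{Fourier taking lemma} with $p=1$, which gives the explicit formula \eqref{20240910 21} for every $(t,\xi)$:
\[
\cF[u(t,\cdot)](\xi)=\int_0^t \exp\Big(\int_s^t(-a^{ij}\xi^i\xi^j+\mathrm{i}b^i\xi^i+c)\,\mathrm{d}r\Big)\cF[f(s,\cdot)](\xi)\,\mathrm{d}s .
\]
The integrand has modulus at most $\mathrm{e}^{\int_s^t c}\|f(s)\|_{\mathrm{L}_1}$ by degenerate ellipticity.

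For (iv), substitute $t=\varepsilon$ and multiply by $\exp(\int_\varepsilon^t(-a\xi\xi+c))$. The exponents combine on $(s,\varepsilon)\cup(\varepsilon,t)$, and the modulus is bounded by $\int_0^\varepsilon \mathrm{e}^{\int_s^t c}\|f(s)\|_{\mathrm{L}_1}\,\mathrm{d}s$. This tends to $0$ by hypothesis (ii) and absolute continuity of the integral.

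For (iii), continuity on $[0,T)$ of the formula in $t$ is the delicate point, because the exponent $\int_s^t a$ is finite only for $s>0$ and $c$ may not be integrable at $0$. Continuity at $t=0$ (value $0$) follows from the bound just used. For continuity at $t_0>0$, I split the integral at a small $\delta<t_0$. On $[\delta,t]$ the coefficients are integrable, so continuity follows from dominated convergence. On $[0,\delta]$, the contribution equals $\exp(\int_\delta^t(\cdots))$ times a fixed quantity, which is continuous in $t$. This is where I expect the main obstacle: the domination must be arranged so as to rely only on hypothesis (ii) and on inside local integrability.

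Finally, uniqueness follows from Corollary \ref{Fourier unique} with $p=1$, $p'=\infty$. The class there requires (i) for every $t$, while the present class only asks for almost every $t$. By the remark following Lemma \ref{pre representation}, the a.e.\ version suffices, because continuity of $t\mapsto\cF[u(t,\cdot)](\xi)$ lets Lemma \ref{pre representation} and Proposition \ref{prop repre} go through unchanged. So the difference of two solutions in the class has vanishing Fourier transform and hence vanishes a.e.
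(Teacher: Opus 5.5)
Your route is the paper's: existence from Theorem \ref{stochastic weak existence}, membership in the class via the explicit formula of Lemma \ref{Fourier taking lemma} together with $|\exp(\int_s^t(-a^{ij}\xi^i\xi^j+\mathrm{i}b^i\xi^i)\mathrm{d}r)|\le 1$, and uniqueness from Corollary \ref{Fourier unique}. Your check of (iv) is correct. Your splitting argument for continuity at $t_0>0$ is already complete, so there is no remaining obstacle there. On $[\delta,T']$, hypothesis (ii) makes $s\mapsto\|f(s,\cdot)\|_{\mathrm{L}_1}$ integrable, and the $[0,\delta]$ piece equals $\exp(\int_\delta^t\cdots)\,\cF[u(\delta,\cdot)](\xi)$. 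The paper does the same thing with one dominating function, $\mathrm{e}^{\int_{t/2}^{T'}|c|\mathrm{d}r}\mathrm{e}^{\int_s^{T'}c\,\mathrm{d}r}|\cF[f(s,\cdot)](\xi)|$ for $t_n\in(t/2,T')$.

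\textbf{The gap: continuity at $t=0$.}

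The bound you used for (iv) freezes the upper limit of the exponent at $t$. It therefore controls $\exp(\int_\varepsilon^t(-a^{ij}\xi^i\xi^j+c)\mathrm{d}s)\,\cF[u(\varepsilon,\cdot)](\xi)$, whose prefactor may tend to $0$, and says nothing about $\cF[u(\varepsilon,\cdot)](\xi)$ itself. What you need is $U(\varepsilon):=\int_0^\varepsilon \mathrm{e}^{\int_s^\varepsilon c\,\mathrm{d}r}\|f(s,\cdot)\|_{\mathrm{L}_1}\mathrm{d}s\to 0$.

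Hypothesis (ii) alone does not give this. Take $d=1$, $a=b=0$, $c(t)=-t^{-2}$ and $f(s,x)=\mathrm{e}^{1/(2s)}\mathbf{1}_{[0,1]}(x-1/s)$. This $f$ is locally integrable on $[0,T)\times\fR$, and (i), (ii) hold. Yet $\sqrt{2\pi}\,\cF[u(t,\cdot)](0)=U(t)=\mathrm{e}^{1/t}\int_0^t\mathrm{e}^{-1/(2s)}\mathrm{d}s\sim 2t^2\mathrm{e}^{1/(2t)}\to\infty$, and likewise $|\cF[u(t,\cdot)](\xi)|\to\infty$ for a.e.\ $\xi$.

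Hypothesis (iii) is what rules this out.
\begin{enumerate}[(a)]
\item $U$ is absolutely continuous on compact subsets of $(0,T)$, with $U'=cU+\|f(t,\cdot)\|_{\mathrm{L}_1}$.
\item Hypothesis (iii) gives $\int_0^{T'}|c|U\,\mathrm{d}t<\infty$.
\item Hence $\int_\tau^t\|f(s,\cdot)\|_{\mathrm{L}_1}\mathrm{d}s\le U(t)+\int_0^t|c|U\,\mathrm{d}s$. So $s\mapsto\|f(s,\cdot)\|_{\mathrm{L}_1}$ is integrable on $(0,t)$, and $L:=\lim_{\tau\downarrow0}U(\tau)$ exists.
\item If $L>0$, then $U\ge L/2$ near $0$ forces $\int_0^\delta|c|<\infty$. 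Then $U(\tau)\le \mathrm{e}^{\int_0^\delta|c|}\int_0^\tau\|f(s,\cdot)\|_{\mathrm{L}_1}\mathrm{d}s\to0$, a contradiction.
\item Thus $|\cF[u(\varepsilon,\cdot)](\xi)|\le(2\pi)^{-d/2}U(\varepsilon)\to0$.
\end{enumerate}
The paper's own proof only asserts continuity on $(0,T)$, so it is silent on this point as well.

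\textbf{Your $\mathrm{e}^{\int_s^t|c|}$ versus $\mathrm{e}^{\int_s^t c}$ caveat.}

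The same computation is what actually resolves this. Your claim that the mollification argument uses only $\int_0^t\mathrm{e}^{\int_s^tc}\|f(s,\cdot)\|_{\mathrm{L}_1}\mathrm{d}s<\infty$ is not accurate, for two reasons.
\begin{enumerate}[(a)]
\item Condition (ii) of Theorem \ref{classic existence solution}, applied to $f^{(\varepsilon)}$, requires $\int_0^{T'}\|f^{(\varepsilon)}(t,\cdot)\|_{\mathrm{L}_\infty}\mathrm{d}t<\infty$, that is, $\int_0^{T'}\|f(t,\cdot)\|_{\mathrm{L}_1}\mathrm{d}t<\infty$.
\item Lemma \ref{Fourier taking lemma}, which you invoke, assumes $f\in\mathrm{L}_{1,1,loc}((0,T)\times\fR^d)$.
\end{enumerate}
Both follow from (ii) and (iii) via step (c) above, but not from (ii) alone; the example shows $\int_0^t\|f(s,\cdot)\|_{\mathrm{L}_1}\mathrm{d}s=\infty$ there.
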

\begin{proof}

Given the complexity of the proof, we break it down into the following sequential parts.

\vspace{2mm}
{\bf I. Existence and Established Bounds}
\vspace{2mm}

Theorem \ref{stochastic weak existence} provides the existence of a weak solution $u$ to \eqref{main eqn 1}, which can be explicitly represented via the stochastic formula:$$u(t,x) = \int_0^t  \mathrm{e}^{\int_s^t c(r)\mathrm{d}r}  \bE\left[f\left(s,x + X_{s,t}\right)  \right] \mathrm{d}s.$$This solution $u$ obeys the following integral bounds:\begin{align}
										\label{20240914 71}
 \|u(t,\cdot)\|_{\mathrm{L}_p(\fR^d)}
\leq \int_0^{t} \mathrm{e}^{\int_s^t c(r)\mathrm{d}r} \|f(s,\cdot)\|_{\mathrm{L}_p(\fR^d)}\mathrm{d}s
\end{align}
and
\begin{align}
										\notag
&\int_0^{t} \mu_{a,b,c}(s) \|u(s,\cdot)\|_{\mathrm{L}_1(\fR^d)} \mathrm{d}s \\
										\label{20240914 72}
&\leq \int_0^{t} \mu_{a,b,c}(s) \int_0^s \mathrm{e}^{\int_r^s c(\rho)\mathrm{d}\rho} \|f(r,\cdot)\|_{\mathrm{L}_1(\fR^d)}\mathrm{d}r \mathrm{d}s.
\end{align}
Theorem \ref{stochastic weak existence} additionally states that if the coefficient $c(t)$ is locally integrable on $[0,T)$, the map $t \mapsto \|u(t,\cdot)\|_{\mathrm{L}_1(\fR^d)}$ becomes absolutely continuous. This yields the following supremum bound for any $T' \in (0,T)$:
$$
\sup_{t \in [0,T']} \mathrm{e}^{-\int_0^t c(r)\mathrm{d}r} \|u(t,\cdot)\|_{\mathrm{L}_1(\fR^d)}
\leq \int_0^{T'} \mathrm{e}^{-\int_0^s c(r)\mathrm{d}r} \|f(s,\cdot)\|_{\mathrm{L}_1(\fR^d)}\mathrm{d}s.
$$

\vspace{2mm}
{\bf II. Verifying the Uniqueness Class}
\vspace{2mm}

As established in Corollary \ref{Fourier unique}, the uniqueness of the weak solution is guaranteed within the class of functions that satisfy conditions $(i)$ through $(iv)$.
Thus, to conclude the proof, it is sufficient to demonstrate that this specific solution $u$ falls into the required uniqueness class. 
This means verifying that $u$ satisfies all four of the following  properties:
\begin{enumerate}[(i)]
\item For any $t \in (0,T)$,
\begin{align*}
\|u(t,\cdot)\|_{\mathrm{L}_{p}(\fR^d)} < \infty.
\end{align*}
\item For any $T' \in (0,T)$, 
\begin{align*}
\int_0^{T'}\mu_{a,b,c}(t)\|\cF[u(t,\cdot)]\|_{\mathrm{L}_{\infty}(\fR^d)} \mathrm{d}t 
< \infty.
\end{align*}
\item For almost every $\xi \in \fR^d$, the mapping $t \mapsto \cF[u(t,\cdot)](\xi)$ is continuous on $[0,T)$.
\item For almost every $\xi \in \fR^d$,
\begin{align*}
\lim_{\varepsilon \downarrow 0}\exp\left(\int_\varepsilon^t\left(-a^{ij}(s)\xi^i\xi^j+c(s) \right)\mathrm{d}s  \right) \cF[u(\varepsilon,\cdot)](\xi)=0 \quad \forall t \in (0,T).
\end{align*}
\end{enumerate}

\vspace{2mm}
{\bf III.  Proof of the condtions}
\vspace{2mm}

Condition $(i)$ has already been established in \eqref{20240909 82}.
To verify the remaining conditions, we apply Lemma \ref{Fourier taking lemma} to find the explicit Fourier transform of $u$ for all $(t,\xi) \in (0,T) \times \fR^d$:
\begin{align}
										\notag
&\cF[u(t,\cdot)](\xi)  \\
										\label{20240914 70}
&= \int_0^t \exp\left( \int_s^t\left(-a^{ij}(r)\xi^i\xi^j+ib^i(r)\xi^i+c(r) \right)\mathrm{d}r\right) \cF\left[f\left(s, \cdot \right)\right](\xi)\mathrm{d}s.
\end{align}
Because of the degenerate ellipticity condition, the real part of the second-order term is non-positive, meaning that for all $0 < s \leq t < T$, the exponential factor is bounded by 1, that is,
$$
\left|\exp\left( \int_s^t\left(-a^{ij}(r)\xi^i\xi^j+\mathrm{i}b^i(r)\xi^i \right)\mathrm{d}r\right) \right| \leq 1.
$$
This uniform upper bound, when combined with the estimates from \eqref{Fourier est} and \eqref{20240914 72}, immediately confirms conditions $(ii)$ and $(iv)$.
Finally, we address the continuity condition $(iii)$. 
For any $0 < t < T' < T$ and a sequence $t_n \in (t/2, T')$ where $n=1,2,\ldots$, we can bound the integral as follows:
\begin{align*}
&\int_0^{t_n} \exp\left( \int_s^{t_n} c(r) \mathrm{d}r\right) \left|\cF\left[f\left(s, \cdot \right)\right](\xi)\right|\mathrm{d}s \\
&\leq \int_0^{t_n} \exp\left( \int_s^{T'} c(r) \mathrm{d}r  +\int_{t/2}^{T'} |c(r)| \mathrm{d}r\right) \left|\cF\left[f\left(s, \cdot \right)\right](\xi)\right|\mathrm{d}s \\
&\leq \exp\left( \int_{t/2}^{T'} |c(r)| \mathrm{d}r\right) \int_0^{T'} \exp\left( \int_s^{T'} c(r) \mathrm{d}r \right) \left|\cF\left[f\left(s, \cdot \right)\right](\xi)\right|\mathrm{d}s.
\end{align*}
By utilizing Assumption \ref{main as 3}, the bound from \eqref{Fourier est}, the estimate in \eqref{20240914 71}, and applying the Lebesgue dominated convergence theorem, we find that the right-hand side of \eqref{20240914 70} is continuous with respect to $t$ on $(0,T)$. 
This guarantees that the mapping $t \mapsto \cF[u(t,\cdot)](\xi)$ is continuous for any $\xi \in \fR^d$.
This verifies all required conditions, thus completing the proof of the theorem.

\end{proof}

\mysection{Elementary calculus approach for uniqueness of a weak solution}
										\label{elementary section}

In this section, we present an energy estimate tailored to our framework. 
While this estimate is derived using fundamental calculus tools—namely, integration by parts and the fundamental theorem of calculus—the singular nature of the coefficients requires every step to be handled with meticulous care. 
Nevertheless, for $p \in (1,\infty)$, this classical energy estimate proves to be remarkably robust, even in such highly singular settings.

\begin{lemma}[A priori estimate for a weak solution]
										\label{a priori lemma}
Let $p \in (1,\infty)$ and suppose that Assumptions \ref{main as 1}-\ref{main as 3} hold.
Assume that $u$ is a weak solution to \eqref{main eqn 1} with $f$ such that for all $t \in (0,T)$, 
\begin{align}
										\label{20240910 60}
\lim_{\delta \downarrow 0}\mathrm{e}^{\int_\delta^t c(r)\mathrm{d}r}  \|u(\delta,\cdot)\|_{\mathrm{L}_p(\fR^d)} 
=0,
\end{align}
\begin{align}
										\label{20240906 30}
\|u(t,\cdot)\|_{\mathrm{L}_p(\fR^d)} < \infty,
\end{align}
and
\begin{align}
										\label{20240906 30-2}
\int_0^{t}\mu_{a,b,c}(t)\|u(s,\cdot)\|_{\mathrm{L}_{p}(\fR^d)} \mathrm{d}t < \infty.
\end{align}
Then for all $t \in (0,T)$,
\begin{align}
										\label{20240906 60}
\int_{\fR^d}|u(t,x)|^p \mathrm{d}x 
\leq  p\int_0^t \mathrm{e}^{p\int_s^t c(r)\mathrm{d}r}\|u(s,\cdot)\|^{p-1}_{\mathrm{L}_p(\fR^d)}  \|f\left(s,\cdot\right)\|_{\mathrm{L}_p(\fR^d)} \mathrm{d}s.
\end{align}
\end{lemma}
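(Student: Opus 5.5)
The plan is to regularize in space, derive the $\mathrm{L}_p$ energy identity for the mollified function on intervals $[\delta,t]$ where all coefficients are integrable, and then send the mollification parameter and $\delta$ to zero.

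\textbf{Step 1: mollification.} Fix a standard mollifier $\varphi^{(\varepsilon)}$ and set $u^{(\varepsilon)}=u*\varphi^{(\varepsilon)}$, $f^{(\varepsilon)}=f*\varphi^{(\varepsilon)}$. Testing the weak formulation \eqref{20240909 70} with $\varphi^{(\varepsilon)}(x-\cdot)$ gives, for every $x$ and every $0<\delta\le t<T$,
$$
u^{(\varepsilon)}(t,x)=u^{(\varepsilon)}(\delta,x)+\int_\delta^t\left(a^{ij}u^{(\varepsilon)}_{x^ix^j}+b^iu^{(\varepsilon)}_{x^i}+cu^{(\varepsilon)}+f^{(\varepsilon)}\right)(s,x)\,\mathrm{d}s .
$$
Since $\|D^\alpha u^{(\varepsilon)}(s,\cdot)\|_{\mathrm{L}_p}\lesssim_\varepsilon\|u(s,\cdot)\|_{\mathrm{L}_p}$, hypothesis \eqref{20240906 30-2} makes the integrand Bochner integrable in $\mathrm{L}_p(\fR^d)$ on $[\delta,t]$. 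Hence $s\mapsto u^{(\varepsilon)}(s,\cdot)$ is absolutely continuous in $\mathrm{L}_p$ (and in $W^2_p$) on $[\delta,t]$. The same holds pointwise in $x$, because all spatial derivatives are bounded by $\|u(s,\cdot)\|_{\mathrm{L}_p}$.

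\textbf{Step 2: energy identity with integrating factor.} Put $v(s,x)=\mathrm{e}^{-\int_\delta^s c}u^{(\varepsilon)}(s,x)$; this removes the $c$-term. Differentiating $\int|v|^p\,\mathrm{d}x$ in $s$ requires justifying differentiation under the integral. I would do this through the chain rule for $|\cdot|^p$, which is $C^1$ for $p>1$, applied to the $\mathrm{L}_p$-valued absolutely continuous map, using the duality pairing of $p|v|^{p-2}v\in\mathrm{L}_{p'}$ with $v_s\in\mathrm{L}_p$. The resulting terms are handled as follows:
\begin{itemize}
\item \emph{Drift term.} $\int b^iv_{x^i}|v|^{p-2}v\,\mathrm{d}x=\frac{b^i}{p}\int(|v|^p)_{x^i}\,\mathrm{d}x=0$.
\item \emph{Diffusion term.} Integration by parts gives $\int a^{ij}v_{x^ix^j}|v|^{p-2}v\,\mathrm{d}x=-(p-1)\int a^{ij}v_{x^i}v_{x^j}|v|^{p-2}\,\mathrm{d}x\le0$ by \eqref{weak elliptic}. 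This needs care near $v=0$ when $p<2$; I would replace $|v|^p$ by $(|v|^2+\eta)^{p/2}-\eta^{p/2}$ and let $\eta\downarrow0$.
\item \emph{Source term.} By H\"older, it is bounded by $p\|v\|_{\mathrm{L}_p}^{p-1}\mathrm{e}^{-\int_\delta^s c}\|f^{(\varepsilon)}\|_{\mathrm{L}_p}$.
\end{itemize}
Integrating from $\delta$ to $t$ and multiplying by $\mathrm{e}^{p\int_\delta^t c}$ yields
$$
\|u^{(\varepsilon)}(t)\|_{\mathrm{L}_p}^p\le \mathrm{e}^{p\int_\delta^t c}\|u^{(\varepsilon)}(\delta)\|_{\mathrm{L}_p}^p+p\int_\delta^t\mathrm{e}^{p\int_s^t c}\|u^{(\varepsilon)}(s)\|_{\mathrm{L}_p}^{p-1}\|f^{(\varepsilon)}(s)\|_{\mathrm{L}_p}\,\mathrm{d}s .
$$
The factor $\mathrm{e}^{(p-1)\int_s^t c}$ from $v$ combines with the factor from $f$ to give exactly $\mathrm{e}^{p\int_s^t c}$.

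\textbf{Step 3: limits.} Let $\varepsilon\downarrow0$. We have $\|u^{(\varepsilon)}(s)\|\le\|u(s)\|$ and $u^{(\varepsilon)}(s)\to u(s)$ in $\mathrm{L}_p$ for each $s$, and similarly for $f$. So the right-hand integral converges by dominated convergence, with dominant $\mathrm{e}^{p\int_s^tc}\|u(s)\|^{p-1}\|f(s)\|$. This dominant is integrable over $[\delta,t]$ if the right-hand side of \eqref{20240906 60} is finite; otherwise there is nothing to prove. Finally let $\delta\downarrow0$: the boundary term vanishes by \eqref{20240910 60}, and monotone convergence extends the integral to $(0,t)$.

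\textbf{Main obstacle.} I expect the main difficulty to be the rigorous differentiation in Step 2, namely the absolute continuity of $s\mapsto\int|v|^p$ together with the integration by parts for $p\in(1,2)$ near $v=0$. I would handle it through the $\eta$-regularization, combined with the fact that for fixed $\varepsilon$ all derivatives of $v$ lie in $\mathrm{L}_p$ with norms controlled by $\|u(s)\|_{\mathrm{L}_p}$, which is $\mu_{a,b,c}$-integrable on $[\delta,t]$.
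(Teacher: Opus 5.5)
Your argument is the paper's energy method: mollify, strip off the lower-order terms, apply the chain rule for $|v|^p$, integrate by parts using \eqref{weak elliptic}, apply H\"older to the source, then let $\delta,\varepsilon\downarrow 0$. Three of your choices differ from the paper's but are legitimate substitutes: the cancellation $\int b^i v_{x^i}|v|^{p-2}v\,\mathrm{d}x=0$ replaces the Galilean shift $x-\int_\delta^t b(s)\,\mathrm{d}s$, the $\eta$-regularization for $p<2$ replaces the citation of Krylov's lemma, and you take the limits in the opposite order.

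The step that fails is the claim in Step 1 that \eqref{20240906 30-2} makes the integrand Bochner integrable in $\mathrm{L}_p(\fR^d)$ on $[\delta,t]$. This is true for the coefficient terms. For $f^{(\varepsilon)}$, however, it requires $\int_\delta^t\|f(s,\cdot)\|_{\mathrm{L}_p(\fR^d)}\,\mathrm{d}s<\infty$. The lemma does not assume this, and it does not follow from finiteness of the right-hand side of \eqref{20240906 60}.

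Here is an example. Take $d=1$ and $a=b=c=0$. Let $u(s,x)=\sum_{k\ge 1}\Psi_k(s)g(x-3k)$, where $g\in\mathrm{C}_c^\infty((-1,1))$ with $\|g\|_{\mathrm{L}_p}=1$, and $\Psi_k$ is a tent of height $1/k$ supported on an interval $I_k\subset(\delta,t)$, the $I_k$ having disjoint interiors.
\begin{itemize}
\item Then $u$ is a weak solution with the locally integrable source $f=\sum_k\Psi_k'(s)g(x-3k)$, and all hypotheses of the lemma hold.
\item The right-hand side of \eqref{20240906 60} equals $\sum_k 2k^{-p}<\infty$.
\item Yet $\int_\delta^t\|f(s,\cdot)\|_{\mathrm{L}_p}\,\mathrm{d}s=\sum_k 2/k=\infty$, and for small $\varepsilon$ the map $s\mapsto u^{(\varepsilon)}(s,\cdot)$ has infinite variation in $\mathrm{L}_p$.
\end{itemize}
So the $\mathrm{L}_p$-valued absolute continuity behind your chain rule and duality pairing is false in general. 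Your bound on $\sup_{s\in[\delta,t]}\|u^{(\varepsilon)}(s,\cdot)\|_{\mathrm{L}_p}$ also comes only from that absolute continuity. You need that bound to make $\|v\|_{\mathrm{L}_p}^{p-1}\|a^{ij}v_{x^ix^j}\|_{\mathrm{L}_p}$ integrable, so Step 2 is unjustified as it stands.

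The paper avoids the $\mathrm{L}_p$-valued claim by using the scalar chain rule pointwise in $x$ and integrating in $x$ only afterwards. For fixed $x$, $s\mapsto u^{(\varepsilon)}(s,x)$ is absolutely continuous because $f$ is locally integrable. The source term is then controlled through Tonelli by the finite right-hand side. Note, however, that justifying the exchange of integrals in the diffusion term via \eqref{20240906 32} and H\"older leads to $\int_\delta^t\mu_{a,b,c}(s)\|u^{(\varepsilon)}(s,\cdot)\|_{\mathrm{L}_p}^{p-1}\|u(s,\cdot)\|_{\mathrm{L}_p}\,\mathrm{d}s$. Its finiteness again needs the missing sup bound, so switching to the pointwise route is not by itself a complete fix.

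The simplest repair is to add the hypothesis $\int_\delta^t\|f(s,\cdot)\|_{\mathrm{L}_p(\fR^d)}\,\mathrm{d}s<\infty$ for all $0<\delta<t<T$. This costs nothing in the paper, since the lemma is invoked only in Theorem~\ref{unique weak}, where $f=0$. Under this hypothesis your Steps 1--3 are correct as written, and the sup bound follows from continuity of $s\mapsto u^{(\varepsilon)}(s,\cdot)$ in $\mathrm{L}_p$.
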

\begin{proof}
Without loss of generality, we may assume
\begin{align}
										\label{20240906 30-3}
\int_0^t \mathrm{e}^{p\int_s^t c(r)\mathrm{d}r}\|u(s,\cdot)\|^{p-1}_{\mathrm{L}_p(\fR^d)}  \|f\left(s,\cdot\right)\|_{\mathrm{L}_p(\fR^d)} \mathrm{d}s < \infty.
\end{align}
To enhance readability, we organize the proof into four logical parts.

\vspace{2mm}
{\bf I. Mollification and Classical Formulation}
\vspace{2mm}

Let $\varphi \in \mathrm{C}_c^\infty(\fR^d)$ be a standard non-negative mollifier such that $\int_{\fR^d} \varphi(x) \mathrm{d}x=1$. 
For any $\varepsilon \in (0,\infty)$ and $(t,x) \in (0,T) \times \fR^d$, we define the spatial convolutions:
$$
u^{(\varepsilon)}(t,x) = \int_{\fR^d} u(t,y) \varphi^{(\varepsilon)}(x-y) \mathrm{d}y
$$
and
$$
f^{(\varepsilon)}(t,x) = \int_{\fR^d} f(t,y) \varphi^{(\varepsilon)}(x-y) \mathrm{d}y.
$$
By the standard properties of convolutions, for any $d$-dimensional multi-index $\alpha$, the spatial derivatives satisfy the following bounds:
\begin{align}
										\label{20240906 32}
\|D^\alpha_xu^{(\varepsilon)}(t,\cdot)\|_{\mathrm{L}_p(\fR^d)}
\lesssim_{\varepsilon,\alpha} \|u(t,\cdot)\|_{\mathrm{L}_p(\fR^d)}
\end{align}
and
\begin{align}
										\label{20240906 33}
\|D^\alpha_xf^{(\varepsilon)}(t,\cdot)\|_{\mathrm{L}_p(\fR^d)}
\lesssim_{\varepsilon,\alpha} \|f(t,\cdot)\|_{\mathrm{L}_p(\fR^d)}.
\end{align}
Because of this smoothing effect, for each $\varepsilon \in (0,\infty)$, $u^{(\varepsilon)}$ becomes a classical solution to the regularized equation
\begin{align}
										\label{20240906 10}
u^{(\varepsilon)}(t,x) = \int_0^t a^{ij}(s) u^{(\varepsilon)}_{x^ix^j}(s,x)   \mathrm{d}s +\int_0^t f^{(\varepsilon)}(s,x)\mathrm{d}s \quad \forall (t,x) \in (0,T) \times \fR^d.
\end{align}
Consequently, for any $T' \in (0,T)$ and $x \in \fR^d$, equation \eqref{20240906 10} yields the pointwise bound
\begin{align}
										\label{20240906 20}
\sup_{t \in [0,T']} |u^{(\varepsilon)}(t,x)| 
\leq \int_0^{T'} |a^{ij}(s) u^{(\varepsilon)}_{x^ix^j}(s,x)|   \mathrm{d}s +\int_0^{T'} |f^{(\varepsilon)}(s,x)|\mathrm{d}s  <\infty.
\end{align}

\vspace{2mm}
{\bf II. Transformation and Application of the Chain Rule}
\vspace{2mm}

To handle the lower-order coefficients, we introduce a transformed function. For $\varepsilon \in (0,\infty)$ and $0 <\delta \leq t < T$, we define:
\begin{align}
										\label{20240910 50}
v^{(\delta,\varepsilon)}(t,x)= \mathrm{e}^{-\int_\delta^t c(s)\mathrm{d}s} u^{(\varepsilon)}\left(t,x-\int_\delta^tb(s)\mathrm{d}s \right).
\end{align}
For each fixed $\delta$ and $x$, the map $t \mapsto v^{(\delta,\varepsilon)}(t,x)$ is absolutely continuous on $[\delta,T']$ for all $T' \in (\delta,T)$. Applying the chain rule, we obtain the derivative with respect to $t$ for almost every $t \in (\delta,T)$:
\begin{align*}
v^{(\delta,\varepsilon)}_t(t,x) 
&= -c(t)v^{(\delta,\varepsilon)}(t,x)  \\
&\quad +\mathrm{e}^{-\int_\delta^t c(s)\mathrm{d}s} \left( u^{(\varepsilon)}_t\left(t,x-\int_\delta^tb(s)\mathrm{d}s \right)
- b^i(t) u^{(\varepsilon)}_{x^i}\left(t,x-\int_\delta^tb(s)\mathrm{d}s \right)  \right) \\
&=a^{ij}(t)v_{x^ix^j}^{(\delta,\varepsilon)}(t,x) +\mathrm{e}^{-\int_\delta^t c(s)\mathrm{d}s} f^{(\varepsilon)}\left(t,x-\int_\delta^tb(s)\mathrm{d}s\right).
\end{align*}
Furthermore, the mapping $t \mapsto |v^{(\delta,\varepsilon)}(t,x)|^p$ is also absolutely continuous on $[\delta,T']$. Applying the chain rule once more yields:
\begin{align*}
\frac{d}{dt}|v^{(\delta,\varepsilon)}(t,x)|^p
&=p|v^{(\delta,\varepsilon)}(t,x)|^{p-2}v^{(\delta,\varepsilon)}(t,x)a^{ij}(t)v_{x^ix^j}^{(\delta,\varepsilon)}(t,x) \\
&\quad +p|v^{(\delta,\varepsilon)}(t,x)|^{p-2}v^{(\delta,\varepsilon)}(t,x)\mathrm{e}^{-\int_\delta^t c(s)\mathrm{d}s} f^{(\varepsilon)}\left(t,x-\int_\delta^tb(s)\mathrm{d}s\right).
\end{align*}

\vspace{2mm}
{\bf III. Integration and the Energy Bound}
\vspace{2mm}

By integrating from $\delta$ to $t$ via the fundamental theorem of calculus, we have for all $(t,x) \in [\delta,T) \times \fR^d$:
\begin{align*}
|v^{(\delta,\varepsilon)}(t,x)|^p
&=|v^{(\delta,\varepsilon)}(\delta,x)|^p + p\int_\delta^t \left( a^{ij}(s) |v^{(\delta,\varepsilon)}(s,x)|^{p-2}v^{(\delta,\varepsilon)}(s,x) v^{(\delta,\varepsilon)}_{x^ix^j}(s,x)\right) \mathrm{d}s \\
&\quad +p\int_\delta^t |v^{(\delta,\varepsilon)}(s,x)|^{p-2}v^{(\delta,\varepsilon)}(s,x)\mathrm{e}^{-\int_\delta^s c(r)\mathrm{d}r} f^{(\varepsilon)}\left(s,x-\int_\delta^sb(r)\mathrm{d}r\right) \mathrm{d}s \\
&= |u^{(\varepsilon)}(\delta,x)|^p + p\int_\delta^t \left( a^{ij}(s) |v^{(\delta,\varepsilon)}(t,x)|^{p-2}v^{(\delta,\varepsilon)}(t,x) v^{(\delta,\varepsilon)}_{x^ix^j}(s,x)\right) \mathrm{d}s \\
&\quad +p\int_\delta^t |v^{(\delta,\varepsilon)}(s,x)|^{p-2}v^{(\delta,\varepsilon)}(s,x)\mathrm{e}^{-\int_\delta^s c(r)\mathrm{d}r} f^{(\varepsilon)}\left(s,x-\int_\delta^sb(r)\mathrm{d}r\right) \mathrm{d}s.
\end{align*}
Next, we integrate with respect to $x$ over $\fR^d$. 
By applying Fubini's theorem, integration by parts, and the degenerate ellipticity of $a^{ij}(t)$, we obtain
\begin{align}
										\notag
&\int_{\fR^d}|v^{(\delta,\varepsilon)}(t,x)|^p\mathrm{d}x \\
										\notag
&\leq \int_{\fR^d}|u^{(\varepsilon)}(\delta,x)|^p \mathrm{d}x \\
										\notag
&\quad -p(p-1) \int_\delta^t \int_{\fR^d}\left( \lambda(s) |v^{(\delta,\varepsilon)}(t,x)|^{p-2} |v^{(\delta,\varepsilon)}_{x}(s,x)|^2\right)  \mathrm{d}x\mathrm{d}s \\
										\notag
&\quad +p\int_\delta^t \int_{\fR^d}|v^{(\delta,\varepsilon)}(s,x)|^{p-2}v^{(\delta,\varepsilon)}(s,x)\mathrm{e}^{-\int_\delta^s c(r)\mathrm{d}r} f^{(\varepsilon)}\left(s,x-\int_\delta^sb(r)\mathrm{d}r\right) \mathrm{d}x\mathrm{d}s \\
										\notag
&\leq  \int_{\fR^d}|u^{(\varepsilon)}(\delta,x)|^p \mathrm{d}x \\
										\label{20240910 51}
&\quad +p\int_\delta^t \int_{\fR^d}|v^{(\delta,\varepsilon)}(s,x)|^{p-2}v^{(\delta,\varepsilon)}(s,x)\mathrm{e}^{-\int_\delta^s c(r)\mathrm{d}r} f^{(\varepsilon)}\left(s,x-\int_\delta^sb(r)\mathrm{d}r\right) \mathrm{d}x\mathrm{d}s. 
\end{align}
The use of Fubini's theorem is justified by equations \eqref{20240906 30}, \eqref{20240906 30-2}, \eqref{20240906 30-3}, \eqref{20240906 32}, and \eqref{20240906 33}, guaranteeing the necessary joint integrability. 
Additionally, integration by parts requires careful handling for $p \in (1,2)$ since the map $x \mapsto |x|^p$ is not strictly twice differentiable at the origin; however, this procedure remains valid, and we refer the reader to \cite[Lemma 2.17]{Krylov 1999} for a rigorous justification.

\vspace{2mm}
{\bf IV. Passing to the Limits}
\vspace{2mm}

Substituting the definition of $v^{(\delta,\varepsilon)}$ from \eqref{20240910 50} back into \eqref{20240910 51} and applying H\"older's inequality yields
\begin{align*}
&\mathrm{e}^{-p\int_\delta^t c(r)\mathrm{d}r} \int_{\fR^d}|u^{(\varepsilon)}(t,x)|^p\mathrm{d}x  \\
&=\int_{\fR^d}|v^{(\delta,\varepsilon)}(t,x)|^p\mathrm{d}x \\
&\leq  \int_{\fR^d}|u^{(\varepsilon)}(\delta,x)|^p \mathrm{d}x \\
&\quad +p\int_\delta^t \mathrm{e}^{-p\int_\delta^s c(r)\mathrm{d}r}\|u^{(\varepsilon)}(s,\cdot)\|^{p-1}_{\mathrm{L}_p(\fR^d)}  \|f^{(\varepsilon)}\left(s,\cdot\right)\|_{\mathrm{L}_p(\fR^d)} \mathrm{d}s.
\end{align*}
Multiplying through by the exponential factor gives
\begin{align*}
&\int_{\fR^d}|u^{(\varepsilon)}(t,x)|^p\mathrm{d}x  \\
&\leq  \mathrm{e}^{p\int_\delta^t c(r)\mathrm{d}r}  \int_{\fR^d}|u^{(\varepsilon)}(\delta,x)|^p \mathrm{d}x \\
&\quad +p\int_\delta^t \mathrm{e}^{p\int_s^t c(r)\mathrm{d}r}\|u^{(\varepsilon)}(s,\cdot)\|^{p-1}_{\mathrm{L}_p(\fR^d)}  \|f^{(\varepsilon)}\left(s,\cdot\right)\|_{\mathrm{L}_p(\fR^d)} \mathrm{d}s.
\end{align*}
Taking the limit as $\delta \downarrow 0$ and utilizing \eqref{20240910 60} alongside \eqref{20240906 32}, the initial value term vanishes, leaving:
\begin{align*}
\int_{\fR^d}|u^{(\varepsilon)}(t,x)|^p\mathrm{d}x\leq  p\int_0^t \mathrm{e}^{p\int_s^t c(r)\mathrm{d}r}\|u^{(\varepsilon)}(s,\cdot)\|^{p-1}_{\mathrm{L}p(\fR^d)}\|f^{(\varepsilon)}\left(s,\cdot\right)\|_{\mathrm{L}_p(\fR^d)} \mathrm{d}s.
\end{align*}
Finally, by taking the limit as $\varepsilon \downarrow 0$ and invoking standard properties of Sobolev mollifiers, we recover the desired energy estimate \eqref{20240906 60}.

\end{proof}

\begin{theorem}[Uniqueness of a weak solution]
									\label{unique weak}
Let $p \in (1,\infty)$ and $f$ be a locally integrable function on $(0,T) \times \fR^d$.
Suppose that Assumptions \ref{main as 1} - \ref{main as 3} hold.
Then a weak solution $u$ to \eqref{main eqn 1} is unique in the class that for all $t \in (0,T)$,
\begin{align*}
\lim_{\delta \downarrow 0}\mathrm{e}^{\int_\delta^t c(r)\mathrm{d}r}  \|u(\delta,\cdot)\|_{\mathrm{L}_p(\fR^d)} 
=0,
\end{align*}
\begin{align*}
\|u(t,\cdot)\|_{\mathrm{L}_p(\fR^d)} < \infty,
\end{align*}
and
\begin{align*}
\int_0^{t}\mu_{a,b,c}(s)\|u(s,\cdot)\|_{\mathrm{L}_{p}(\fR^d)} \mathrm{d}t < \infty.
\end{align*}
\end{theorem}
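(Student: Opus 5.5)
The argument is the standard linearity reduction combined with the a priori estimate of Lemma \ref{a priori lemma} applied with zero data. Suppose $u_1$ and $u_2$ are two weak solutions of \eqref{main eqn 1} with the same source $f$, and both belong to the stated class. Put $u := u_1 - u_2$.

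\textbf{Step 1: $u$ solves the homogeneous problem.} Subtracting the integral identities \eqref{20240909 70} for $u_1$ and $u_2$ shows that $u$ is a weak solution of \eqref{main eqn 1} with data $f \equiv 0$. Every term in \eqref{20240909 70} is finite because, for each $\varphi \in C_c^\infty(\fR^d)$,
$$
\bigl|\bigl(u_k(s,\cdot), a^{ij}(s)\varphi_{x^ix^j} - b^i(s)\varphi_{x^i} + c(s)\varphi\bigr)_{L_2}\bigr| \lesssim_\varphi \mu_{a,b,c}(s)\,\|u_k(s,\cdot)\|_{L_p},
$$
and the right-hand side is integrable on $(0,t)$ by the weighted integrability hypothesis.

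\textbf{Step 2: $u$ lies in the class.} The class conditions are stable under differences, by Minkowski's inequality:
\begin{itemize}
\item $\|u(t,\cdot)\|_{L_p} \le \|u_1(t,\cdot)\|_{L_p} + \|u_2(t,\cdot)\|_{L_p} < \infty$;
\item $\int_0^t \mu_{a,b,c}\|u\|_{L_p}\,\mathrm{d}s \le \sum_k \int_0^t \mu_{a,b,c}\|u_k\|_{L_p}\,\mathrm{d}s < \infty$;
\item $\mathrm{e}^{\int_\delta^t c}\|u(\delta,\cdot)\|_{L_p} \le \sum_k \mathrm{e}^{\int_\delta^t c}\|u_k(\delta,\cdot)\|_{L_p} \to 0$ as $\delta \downarrow 0$.
\end{itemize}
Hence $u$ satisfies \eqref{20240910 60}, \eqref{20240906 30} and \eqref{20240906 30-2}.

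\textbf{Step 3: conclude.} Apply Lemma \ref{a priori lemma} to $u$ with $f \equiv 0$. The right-hand side of \eqref{20240906 60} is zero, so $\|u(t,\cdot)\|_{L_p}^p \le 0$ for every $t \in (0,T)$. Thus $u(t,\cdot) = 0$ a.e. on $\fR^d$ for each $t$, and by Fubini $u = 0$ a.e. on $(0,T)\times\fR^d$. Therefore $u_1 = u_2$.

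\textbf{Main obstacle.} The only delicate point is that Lemma \ref{a priori lemma} must hold with zero data. Its proof uses the preliminary finiteness \eqref{20240906 30-3}, which holds trivially here since the integral vanishes. It also uses the mollification step, which requires $u^{(\varepsilon)}$ to be a classical solution of the regularized equation. Beyond this, the argument should need nothing new.
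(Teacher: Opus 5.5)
Your proposal is correct and follows the paper's proof: you take $u=u_1-u_2$, observe that it is a weak solution with $f\equiv 0$ lying in the same class, and apply Lemma~\ref{a priori lemma} to get $\|u(t,\cdot)\|_{\mathrm{L}_p(\fR^d)}^p\le 0$ for every $t$. Your checks that the three class conditions survive taking differences (by Minkowski), and that every term of the weak formulation is finite, fill in details the paper leaves implicit.
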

\begin{proof}
Suppose $u_1$ and $u_2$ are two weak solutions within the specified class. 
Due to the linearity of the equation, their difference $u := u_1 - u_2$ is a solution to the homogeneous equation corresponding to \eqref{main eqn 1} (where $f=0$). 
Because $u$ also belongs to this admissible class, applying Lemma \ref{a priori lemma} yields the estimate
$$
\int_{\fR^d}|u(t,x)|^p \mathrm{d}x \leq 0
$$
for all $t \in (0,T)$. 
This implies that $u(t,x) = 0$ almost everywhere in $\fR^d$ for every $t \in (0,T)$, thereby establishing the uniqueness of the solution.
\end{proof}

We are now in a position to prove Theorem \ref{main thm 3}. 
For the reader's convenience, we first restate the theorem before providing its proof.

\begin{theorem}[Well-posedness of a weak solution]
										\label{other p thm}
Let $p \in (1,\infty)$ and $f$ be a locally integrable function on $[0,T) \times \fR^d$.
Suppose that Assumptions \ref{main as 1} - \ref{main as 3} hold.
Additionally, assume the following conditions:
\begin{enumerate}[(i)]
\item For any $0<s<t<T$,
\begin{align*}
\int_s^t \mathrm{e}^{\int_s^r c(\rho)\mathrm{d}\rho}\max_{i,j}|a^{ij}(r)| \mathrm{d}r < \infty.
\end{align*}
\item For any $t \in (0,T)$,
\begin{align*}
\int_0^t \mathrm{e}^{\int_s^t c(r)\mathrm{d}r} \|f(s,\cdot)\|_{\mathrm{L}_p(\fR^d)}\mathrm{d}s < \infty.
\end{align*}
\item For any $T' \in (0,T)$,
\begin{align*}
\int_0^{T'}  \mu_{a,b,c}(t) \int_0^t \mathrm{e}^{\int_s^t c(r)\mathrm{d}r}\|f(s,\cdot)\|_{\mathrm{L}_p(\fR^d)}\mathrm{d}s \mathrm{d}t  <\infty.
\end{align*}
\end{enumerate}
Then there exists a unique weak solution $u$ to \eqref{main eqn 1} in the class that
for all $t \in (0,T)$,
\begin{align*}
\lim_{\delta \downarrow 0}\mathrm{e}^{\int_\delta^t c(r)\mathrm{d}r}  \|u(\delta,\cdot)\|_{\mathrm{L}_p(\fR^d)} 
=0,
\end{align*}
\begin{align*}
\|u(t,\cdot)\|_{\mathrm{L}_p(\fR^d)} < \infty,
\end{align*}
and
\begin{align*}
\int_0^{t}\mu_{a,b,c}(s)\|u(s,\cdot)\|_{\mathrm{L}_{p}(\fR^d)} \mathrm{d}t < \infty.
\end{align*}
In addition, for any $t \in (0,T)$, the solution $u$ obeys the bounds
\begin{align*}
 \|u(t,\cdot)\|_{\mathrm{L}_p(\fR^d)}
\leq  \int_0^{t} \mathrm{e}^{\int_s^t c(r)\mathrm{d}r} \|f(s,\cdot)\|_{\mathrm{L}_p(\fR^d)}\mathrm{d}s
\end{align*}
and
\begin{align*}
&\int_0^{t} \mu_{a,b,c}(s) \|u(s,\cdot)\|_{\mathrm{L}_p(\fR^d)} \mathrm{d}s \\
&\leq \int_0^{t} \mu_{a,b,c}(s) \int_0^s \mathrm{e}^{\int_r^s c(\rho)\mathrm{d}\rho} \|f(r,\cdot)\|_{\mathrm{L}_p(\fR^d)}\mathrm{d}r \mathrm{d}s.
\end{align*}
If we further assume that the coefficient $c(t)$ is locally integrable on $[0,T)$, that is
\begin{align*}
\int_0^{T'} |c(t)| \mathrm{d}t < \infty \quad \forall T' \in (0,T),
\end{align*}
then for every $T' \in (0,T)$, the temporal mapping $t \mapsto \|u(t,\cdot)\|_{\mathrm{L}_p(\fR^d)}$ is absolutely continuous on $[0,T']$, and we obtain the estimate
\begin{align*}
\sup_{t \in [0,T']} \mathrm{e}^{-\int_0^t c(r)\mathrm{d}r} \|u(t,\cdot)\|_{\mathrm{L}_p(\fR^d)}
\leq \int_0^{T'} \mathrm{e}^{-\int_0^s c(r)\mathrm{d}r} \|f(s,\cdot)\|_{\mathrm{L}_p(\fR^d)}\mathrm{d}s.
\end{align*}
\end{theorem}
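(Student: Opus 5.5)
Existence comes from Theorem \ref{stochastic weak existence} and uniqueness from Theorem \ref{unique weak}. The one thing to check is that the stochastic solution lies in the uniqueness class. Under (i)--(iii), Theorem \ref{stochastic weak existence} gives the weak solution
$$
u(t,x)=\int_0^t \mathrm{e}^{\int_s^t c(r)\mathrm{d}r}\bE\left[f\left(s,x+X_{s,t}\right)\right]\mathrm{d}s.
$$
It also gives the two bounds stated in the theorem: the pointwise-in-time $\mathrm{L}_p$ bound and the $\mu_{a,b,c}$-weighted integral bound. These immediately give $\|u(t,\cdot)\|_{\mathrm{L}_p(\fR^d)}<\infty$ by (ii) and $\int_0^t\mu_{a,b,c}(s)\|u(s,\cdot)\|_{\mathrm{L}_p(\fR^d)}\mathrm{d}s<\infty$ by (iii). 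The absolute continuity of $t\mapsto\|u(t,\cdot)\|_{\mathrm{L}_p(\fR^d)}$ and the supremum estimate under local integrability of $c$ are likewise already contained in Theorem \ref{stochastic weak existence}.

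The remaining membership condition is the asymptotic initial condition
$\lim_{\delta\downarrow0}\mathrm{e}^{\int_\delta^t c(r)\mathrm{d}r}\|u(\delta,\cdot)\|_{\mathrm{L}_p(\fR^d)}=0$. Apply \eqref{20240909 82} at time $\delta$ and multiply by $\mathrm{e}^{\int_\delta^t c}$. Since $\int_s^\delta c+\int_\delta^t c=\int_s^t c$, this gives
$$
\mathrm{e}^{\int_\delta^t c(r)\mathrm{d}r}\|u(\delta,\cdot)\|_{\mathrm{L}_p(\fR^d)}\le \int_0^\delta \mathrm{e}^{\int_s^t c(r)\mathrm{d}r}\|f(s,\cdot)\|_{\mathrm{L}_p(\fR^d)}\mathrm{d}s .
$$
By (ii) the function $s\mapsto\mathrm{e}^{\int_s^t c}\|f(s,\cdot)\|_{\mathrm{L}_p}$ is integrable on $(0,t)$. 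Hence the right-hand side tends to $0$ as $\delta\downarrow0$, by absolute continuity of the Lebesgue integral (or dominated convergence). This is the key step, and the cancellation of exponents is what makes it work even when $c$ is not integrable near $0$. In Theorem \ref{stochastic weak existence} condition (ii) carries $|c|$ in the exponent, whereas here it carries $c$. The generalized Minkowski bound only uses $\mathrm{e}^{\int_s^t c}$, so I would rerun that estimate directly with the signed exponent rather than cite the $|c|$ version.

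Once $u$ is in the class, Theorem \ref{unique weak} (through the energy inequality of Lemma \ref{a priori lemma} applied to the difference of two solutions) gives uniqueness. The main obstacle is the fine print: the weak-solution and mollification arguments of Theorem \ref{stochastic weak existence} must really only need the signed-exponent form of (ii). Where they appear to use $|c|$, for instance in dominated convergence as $\varepsilon\downarrow0$, I would replace the dominating function by $\mu_{a,b,c}(s)\|u(s,\cdot)\|_{\mathrm{L}_p}\|\varphi\|_{W^{2}_{p'}}$. This is integrable by the weighted bound and (iii), so the limit passage goes through with the hypotheses as stated.
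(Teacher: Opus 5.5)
Your proposal is correct and follows the paper's proof essentially step for step: existence and the two bounds come from Theorem \ref{stochastic weak existence}, uniqueness from Theorem \ref{unique weak}, and the initial-trace condition from the cancellation $\int_s^\delta c+\int_\delta^t c=\int_s^t c$ followed by absolute continuity of the Lebesgue integral. Your caveat about the $|c|$-versus-$c$ mismatch in hypothesis (ii) is a point the paper passes over by citing Theorem \ref{stochastic weak existence} directly; besides your dominated-convergence fix, the other place it enters is the requirement $\int_0^{T'}\|f^{(\varepsilon)}(t,\cdot)\|_{\mathrm{L}_\infty(\fR^d)}\,\mathrm{d}t<\infty$ in Theorem \ref{classic existence solution}, and this still holds because, with $F(t)=\int_0^t \mathrm{e}^{\int_s^t c(r)\mathrm{d}r}\|f(s,\cdot)\|_{\mathrm{L}_p(\fR^d)}\,\mathrm{d}s$, integrating $F'=cF+\|f(t,\cdot)\|_{\mathrm{L}_p(\fR^d)}$ over $[\delta,t]$ and letting $\delta\downarrow0$ gives $\int_0^t\|f(s,\cdot)\|_{\mathrm{L}_p(\fR^d)}\,\mathrm{d}s\le F(t)+\int_0^t|c(s)|F(s)\,\mathrm{d}s<\infty$ by the signed (ii) together with (iii).
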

\begin{proof}

To enhance readability, we divide the proof into several logical parts.

\vspace{2mm}
{\bf I. Existence and A Priori Estimates}
\vspace{2mm}

Theorem \ref{stochastic weak existence} establishes the existence of a weak solution $u$. Furthermore, this solution can be explicitly represented by the stochastic formula:
\begin{align}
									\label{20240914 90}
u(t,x) = \int_0^t  \mathrm{e}^{\int_s^t c(r)\mathrm{d}r}  \bE\left[f\left(s,x + X_{s,t}\right)  \right] \mathrm{d}s
\end{align}
and satisfies the following estimates for all $t \in (0,T)$:
\begin{align}
										\label{20240914 81}
 \|u(t,\cdot)\|_{\mathrm{L}_p(\fR^d)}
\leq \int_0^{t} \mathrm{e}^{\int_s^t c(r)\mathrm{d}r} \|f(s,\cdot)\|_{\mathrm{L}_p(\fR^d)}\mathrm{d}s
\end{align}
and
\begin{align}
										\notag
&\int_0^{t} \mu_{a,b,c}(s) \|u(s,\cdot)\|_{\mathrm{L}_p(\fR^d)} \mathrm{d}s \\
										\label{20240914 82}
&\leq \int_0^{t} \mu_{a,b,c}(s) \int_0^s \mathrm{e}^{\int_r^s c(\rho)\mathrm{d}\rho} \|f(r,\cdot)\|_{\mathrm{L}_p(\fR^d)}\mathrm{d}r \mathrm{d}s.
\end{align}
Theorem \ref{stochastic weak existence} also guarantees that if $c(t)$ is locally integrable on $[0,T)$, the map $t \mapsto \|u(t,\cdot)\|_{\mathrm{L}_p(\fR^d)}$ is absolutely continuous on $[0,T']$ for all $T' \in (0,T)$. 
Under this additional condition, the following supremum bound holds for any $T' \in (0,T)$:
$$
\sup_{t \in [0,T']} \mathrm{e}^{-\int_0^t c(r)\mathrm{d}r} \|u(t,\cdot)\|_{\mathrm{L}_p(\fR^d)}
\leq \int_0^{T'} \mathrm{e}^{-\int_0^s c(r)\mathrm{d}r} \|f(s,\cdot)\|_{\mathrm{L}_p(\fR^d)}\mathrm{d}s.
$$

\vspace{2mm}
{\bf II. The Uniqueness Class Requirements}
\vspace{2mm}

On the other hand, the uniqueness of a weak solution is already secured by Theorem \ref{unique weak}, provided the solution $u$ is locally integrable on $[0,T) \times \fR^d$ and satisfies three specific conditions for all $t \in (0,T)$:
\begin{align}
										\label{20240914 83}
\lim_{\delta \downarrow 0}\mathrm{e}^{\int_\delta^t c(r)\mathrm{d}r}  \|u(\delta,\cdot)\|_{\mathrm{L}_p(\fR^d)} 
=0,
\end{align}
\begin{align}
										\label{20240914 84}
\|u(t,\cdot)\|_{\mathrm{L}_p(\fR^d)} < \infty,
\end{align}
and
\begin{align}
										\label{20240914 85}
\int_0^{t} \mu_{a,b,c}(s)\|u(t,\cdot)\|_{\mathrm{L}_{p}(\fR^d)} \mathrm{d}t,
< \infty.
\end{align}

\vspace{2mm}
{\bf III. Verifying the Uniqueness Conditions}
\vspace{2mm}

To complete the proof, it is sufficient to demonstrate that the bounds \eqref{20240914 81} and \eqref{20240914 82} imply the uniqueness criteria \eqref{20240914 83}, \eqref{20240914 84}, and \eqref{20240914 85}.
Conditions \eqref{20240914 84} and \eqref{20240914 85} follow immediately from \eqref{20240914 81}, \eqref{20240914 82}, and our initial integrability assumptions on $f$.
To verify the initial trace condition \eqref{20240914 83}, we apply the generalized Minkowski inequality to the representation formula \eqref{20240914 90}, yielding
\begin{align*}
\limsup_{\delta \downarrow 0}\mathrm{e}^{\int_\delta^t c(r)\mathrm{d}r}  \|u(\delta,\cdot)\|_{\mathrm{L}_p(\fR^d)}  
&\leq \limsup_{\delta \downarrow 0} \mathrm{e}^{\int_\delta^t c(r)\mathrm{d}r}  \int_0^\delta  \mathrm{e}^{\int_s^\delta c(r)\mathrm{d}r}  \|f(s,\cdot)\|_{\mathrm{L}_p(\fR^d)}  \mathrm{d}s \\
&\leq  \limsup_{\delta \downarrow 0} \int_0^\delta  \mathrm{e}^{\int_s^t c(r)\mathrm{d}r}  \|f(s,\cdot)\|_{\mathrm{L}_p(\fR^d)}  \mathrm{d}s.
\end{align*}
By our standing assumption, the full integral over $(0,t)$ is finite:
\begin{align*}
\int_0^t \mathrm{e}^{\int_s^t c(r)\mathrm{d}r} \|f(s,\cdot)\|_{\mathrm{L}_p(\fR^d)}\mathrm{d}s < \infty.
\end{align*}
Therefore, the integral from $0$ to $\delta$ must shrink to zero as $\delta \downarrow 0$. This forces the limit to vanish, confirming that \eqref{20240914 83} holds. 
This establishes the uniqueness of the solution and concludes the proof of the theorem.

\end{proof}

\mysection{Proof of main theorems and corollaries}
									\label{pf main thm}

\begin{proof}[Proof of Theorem \ref{main thm 4}]

The proof is organized into the following three distinct parts:

\vspace{2mm}
{\bf I. Establishing Initial Bounds}
\vspace{2mm}

First, we demonstrate that the following three integral bounds are finite:

\begin{enumerate}[(i)]
\item For any $0<s<t<T$,
\begin{align*}
\int_s^t \mathrm{e}^{\int_s^r c(\rho)\mathrm{d}\rho}\max_{i,j}|a^{ij}(r)| \mathrm{d}r < \infty.
\end{align*}
\item For any $t \in (0,T)$,
\begin{align*}
\int_0^t \mathrm{e}^{\int_s^t c(r)\mathrm{d}r} \|f(s,\cdot)\|_{\mathrm{L}_p(\fR^d)}\mathrm{d}s < \infty.
\end{align*}
\item For any $T' \in (0,T)$,
\begin{align*}
\int_0^{T'}  \mu_{a,b,c}(t) \int_0^t \mathrm{e}^{\int_s^t c(r)\mathrm{d}r}\|f(s,\cdot)\|_{\mathrm{L}_p(\fR^d)}\mathrm{d}s \mathrm{d}t  <\infty.
\end{align*}
\end{enumerate}
These results are straightforward consequences of our baseline assumptions regarding the coefficients and the function $f$. 
The local integrability of $c(t)$ on the interval $[0,T)$ is the primary factor that guarantees these bounds.

\vspace{2mm}
{\bf II. Existence of the Solution}
\vspace{2mm}

With these bounds confirmed, we can apply Theorems \ref{main thm 3-1} and \ref{main thm 3} to establish that a unique solution $u$ to equation \eqref{main eqn 1} exists and satisfies the necessary a priori estimates within specific function classes. 
It directly follows from these theorems that the a priori estimates are maintained.

Furthermore, due to the estimates provided in these theorems, the local integrability of $c$, and Remark \ref{absolute remark}, this solution $u$ is guaranteed to belong to the intersection of the following three function spaces: $\mathrm{AC}_{0,loc}\left([0,T) ; \mathrm{L}_p(\fR^d)  \right)$, $\mathrm{L}_{1,p,loc}\left( (0,T) \times \fR^d, \mu_{a,b,c}(t)\mathrm{d}t\right)$, and 
$$
\mathrm{L}_{\infty,p,loc}\left( (0,T) \times \fR^d,  \mathrm{e}^{-\int_0^t c(s)\mathrm{d}s}\mathrm{d}t\right).
$$

\vspace{2mm}
{\bf III. Verifying the Uniqueness Class}
\vspace{2mm}

Because we aim to restrict our solution to these specific spaces, Theorems \ref{main thm 3-1} and \ref{main thm 3} are insufficient on their own to prove uniqueness. 
Instead, we rely on Theorems \ref{Fourier unique} and \ref{unique weak}, which are specifically focused on establishing uniqueness classes depending on the parameter $p$.

To conclude the proof, we need to demonstrate that any function located in the intersection of the three previously mentioned spaces also satisfies these specific uniqueness criteria. We examine this in two cases:
\begin{itemize}
\item{\bf Case 1: $p \in (1,\infty)$} 

The verification that $u$ falls into the uniqueness class for this range is trivial, so the detailed steps are omitted.

\item{\bf Case 2: $p = 1$}

For $p=1$, we must prove that $u$ satisfies the following four conditions:
\begin{enumerate}[(i)]
\item For any $t \in (0,T)$,
\begin{align*}
\|u(t,\cdot)\|_{\mathrm{L}_{1}(\fR^d)} < \infty.
\end{align*}
\item For any $T' \in (0,T)$, 
\begin{align*}
\int_0^{T'} \mu_{a,b,c}(t)\|\cF[u(t,\cdot)]\|_{\mathrm{L}_{\infty}(\fR^d)} \mathrm{d}t,
< \infty.
\end{align*}
\item For almost every $\xi \in \fR^d$, the mapping $t \mapsto \cF[u(t,\cdot)](\xi)$ is continuous on $[0,T)$.
\item For almost every $\xi \in \fR^d$,
\begin{align*}
\lim_{\varepsilon \downarrow 0}\exp\left(\int_\varepsilon^t\left(-a^{ij}(s)\xi^i\xi^j+c(s) \right)\mathrm{d}s  \right) \cF[u(\varepsilon,\cdot)](\xi)=0 \quad \forall t \in (0,T).
\end{align*}
\end{enumerate}
We can verify these four conditions as follows:
\begin{itemize}
\item Condition (i) is satisfied immediately.
\item Condition (ii) follows easily from the estimate established in \eqref{Fourier est} and the assumed properties of $u$.
\item Condition (iv) is proven by applying the degenerate ellipticity condition. This allows us to bound the complex exponential expression purely by the integral of the zero-order term $c(s)$, keeping it finite:
\begin{align*}
\left|\exp\left(\int_\varepsilon^t\left(-a^{ij}(s)\xi^i\xi^j+ib^i(s)\xi^i+c(s) \right)\mathrm{d}s \right)\right|
&\leq \left|\exp\left(\int_\varepsilon^t\left(c(s) \right)\mathrm{d}s \right) \right| \\
&\leq \exp\left(\int_0^t|c(s)|\mathrm{d}s  \right) 
\end{align*}
\item Finally, condition (iii) is a direct consequence of the continuity of the mapping $t \mapsto \|u(t,\cdot)\|_{\mathrm{L}_1(\fR^d)}$ on the interval $(0,T)$.
\end{itemize}
\end{itemize}
The theorem is proved. 
\end{proof}

Finally, we give the proof of Theorem \ref{main thm 5}.
\begin{proof}[Proof of Theorem \ref{main thm 5}]

To improve readability and logical flow, we organize the proof into four distinct parts.

\vspace{2mm}
{\bf I. Deriving the Relationship Between Estimates}
\vspace{2mm}

Theorem \ref{main thm 4} establishes the foundational estimates \eqref{main est 1-1} and \eqref{main est 1-2}, along with the existence and uniqueness of a weak solution $u$ in a wider function class. 
We assert that \eqref{main est 1-1} is inherently stronger than \eqref{main est 2-1}.
By introducing a small parameter $\delta > 0$ and applying both a change of variables and H\"older's inequality to \eqref{main est 1-1}, we can construct an upper bound dependent on $\delta$. Specifically, for any $\delta>0$ and $T' \in (0,T)$, the following sequence of inequalities holds:
\begin{align*}
&\int_0^{T'}  \left(\int_{\fR^d}|u(t,x)|^p\mathrm{d}x \right)^{q/p}\mathrm{e}^{-q\int_0^tc(s)\mathrm{d}s}  w(\alpha(t)+\delta t) \left(\lambda(t)+\delta\right)\mathrm{d}t \\
&\leq \sup_{t \in [0,T']}  \|u(t,\cdot)\|_{\mathrm{L}_p(\fR^d)} \mathrm{e}^{-q\int_0^tc(s)\mathrm{d}s}
\int_0^{\alpha(T')+\delta T'}  w(t) \mathrm{d}t \\
&\leq  \|f\|^q_{\mathrm{L}_{1,p}(\left(0,T') \times \fR^d, \mathrm{e}^{-\int_0^tc(s)\mathrm{d}s} \mathrm{d}t\right)}
\int_0^{\alpha(T')+\delta T'}  w(t) \mathrm{d}t\\
&\leq  \int_0^{\alpha(T')+\delta T'}  w(t) \mathrm{d}t 
\cdot \left(\int_0^{T'}  \left(w(\alpha(t)+\delta t)\right)^{-\frac{1}{q-1}} \left(\lambda(t)+\delta\right)\mathrm{d}t\right)^{q-1}  \\
&\quad \times \|f\|^q_{\mathrm{L}_{q,p}(\left(0,T') \times \fR^d, \mathrm{e}^{-\int_0^tc(s)\mathrm{d}s} w(\alpha(t)+\delta t) (\lambda(t)+\delta)^{1-q} \mathrm{d}t\right)}
\\
&\leq [w]_{\mathrm{A}_p(\fR)}\left(\alpha(T') +\delta T'\right)^q \\
&\quad \times \int_0^{T'}  \|f(t,\cdot)\|_{\mathrm{L}_p(\fR^d)}^q\mathrm{e}^{-q\int_0^tc(s)\mathrm{d}s}  w(\alpha(t)+\delta t) (\lambda(t)+\delta)^{1-q}\mathrm{d}t.
\end{align*}
By taking the limit as $\delta \downarrow 0$, we immediately recover estimate \eqref{main est 2-1}.

\vspace{2mm}
{\bf II. Sobolev Derivatives and Mollification}
\vspace{2mm}

If we temporarily assume that \eqref{main est 2} holds, we can immediately derive \eqref{main est 2-2} by combining \eqref{main est 2-1} with standard Sobolev interpolation inequalities, specifically bounding the first derivative by the function and its second derivative.
Furthermore, \eqref{main est 2} guarantees the existence of the spatial derivatives $u_x$ and $u_{xx}$ on the domain where the spectral lower bound satisfies $\lambda(t) > 0$. 
We justify this rigorously using Sobolev mollifiers. By convolving the functions with a smooth approximation of identity $\varphi^{(\varepsilon)}$ and taking a sequence $\varepsilon_n \downarrow 0$, we generate Cauchy sequences in the weighted spaces. 
This guarantees that the approximations converge in $\mathrm{L}_p(\fR^d)$ almost everywhere with respect to the weighted measure $\mu_\lambda(dt) = w(\alpha(t)) \lambda(t)\mathrm{d}t$. 
Consequently, the true Sobolev derivatives exist on that set that the weight is non-degenerate.
The remainder of the proof is thus dedicated entirely to establishing \eqref{main est 2}.

\vspace{2mm}
{\bf III. Proving Estimate \eqref{main est 2}}
\vspace{2mm}

We split the proof of \eqref{main est 2} into two scenarios based on the integrability of the system's coefficients
\begin{itemize}
\item {\bf Step 1:} (The Locally Integrable Case)

If we assume that both the diffusion matrix $a^{ij}$ and the drift vector $b^i$ are locally integrable on the interval $[0,T)$, the required estimate is already proven and can be directly cited from \cite[Theorem 2.5]{KID 2024}.

\item{\bf Step 2:} (The General Case)

When the coefficients lack local integrability near the initial time, an approximation method is required. 
While we can safely truncate the drift vector point-wise at a threshold $M>0$ using $b_M^i(t) = \max\{-M, \min\{b^i(t), M\}\}$, applying this naive component-wise truncation to $a^{ij}(t)$ could destroy the system's degenerate ellipticity.
To circumvent this, we rely on the spectral decomposition $A(t) = Q(t)\Lambda(t)Q(t)^\top$. 
Assuming the eigenvalues satisfy $\lambda_1(t) \leq \cdots \leq \lambda_d(t)$ and the diagonal matrix $\Lambda(t)$ is given by
\begin{align*}
\Lambda(t)=
\begin{bmatrix}
\lambda_1(t) & 0   & 0   & \cdots & 0   \\
0   &\lambda_2(t)  & 0   & \cdots & 0   \\
0   & 0   &\lambda_3(t) & \cdots & 0   \\
\vdots & \vdots & \vdots & \ddots & \vdots \\
0   & 0   & 0   & \cdots & \lambda_d(t)
\end{bmatrix}.
\end{align*}

\begin{itemize}
\item{Measurability:} The mapping to the eigenvalue matrix $\Lambda(t)$ is naturally measurable. 
Furthermore, the Kuratowski–Ryll-Nardzewski selection theorem guarantees that the mapping to the orthogonal eigenvector matrix $Q(t)$ is also measurable.
\item{Spectral Truncation:} We construct a truncated eigenvalue matrix $\Lambda_M(t)$ by limiting each eigenvalue to a maximum of $M>0$:
\begin{align*}
\Lambda_M(t)=
\begin{bmatrix}
\lambda_1(t)\wedge M & 0   & 0   & \cdots & 0   \\
0   &\lambda_2(t) \wedge M  & 0   & \cdots & 0   \\
0   & 0   &\lambda_3(t) \wedge M & \cdots & 0   \\
\vdots & \vdots & \vdots & \ddots & \vdots \\
0   & 0   & 0   & \cdots & \lambda_d(t) \wedge M.
\end{bmatrix}
\end{align*}
We then reconstruct the approximated diffusion matrix as $A_M(t) = Q(t)\Lambda_M(t)Q(t)^\top$.
This method ensures that $A_M(t)$ remains locally integrable while strictly preserving the necessary ellipticity conditions.
\end{itemize}
\end{itemize}

\vspace{2mm}
{\bf IV. Weak Compactness and Passing to the Limit}
\vspace{2mm}

When we apply {\bf Step 1} to the truncated coefficients $a_M^{ij}$ and $b_M^i$, the necessary estimates continue to hold. 
A critical point is that the constants in these estimates rely strictly on our initial assumptions, maintaining complete independence from the truncation parameter $M$. 
For the sequence of solutions $\{u^M\}$, this uniform bound guarantees both relative weak compactness and the Cauchy property within the appropriate weighted spaces. 
Ordinarily, extracting a weakly convergent Cauchy subsequence would be enough to finish the proof using weighted norm approximation. 
However, the original source term $f$ lacks the spatial regularity required to rigorously take the limit. 
We overcome this by executing the full truncation procedure on an auxiliary system driven by a mollified source term $f^{(\varepsilon)}$. 
By applying analogous relative weak compactness and Cauchy criteria to the smoothed solutions $u^{(\varepsilon)}$, we can rigorously justify the limit passage and finalize the proof.
\end{proof}

\begin{proof}[Proof of Theorem \ref{main thm 6}]

For the sake of clarity, we have structured the proof into several separate stages, as is typical.

\vspace{2mm}
{\bf I. The Objective}
\vspace{2mm}

Building upon Theorem \ref{main thm 5}, the remainder of the proof reduces to verifying \eqref{20240914 20} and confirming that $u$ is a strong solution to \eqref{main eqn 1}. 
Once $u$ is established as a strong solution, \eqref{20240914 20} follows immediately from its definition and the a priori estimates provided in Theorem \ref{main thm 5}. 
Therefore, our primary objective is now to demonstrate that $u$ is a strong solution by utilizing a mollified approximation.

\vspace{2mm}
{\bf II. Applying Sobolev Mollifiers}
\vspace{2mm}

We return to the Sobolev mollifiers $u^{(\varepsilon)}$ and $f^{(\varepsilon)}$ that were utilized in the proof of Theorem \ref{main thm 5}. 
As noted in Remark \ref{mollifier remark}, these regularized functions satisfy the following integral representation :
$$
\begin{aligned}
u^{(\varepsilon)}(t,x) = \int_0^t \left(a^{ij}(s) u^{(\varepsilon)}_{x^ix^j}(s,x) + b^i(s)u^{(\varepsilon)}_{x^i}(s,x) + c(s)u^{(\varepsilon)}(s,x) +f^{(\varepsilon)}(s,x)\right) \mathrm{d}s. 
\end{aligned}
$$

\vspace{2mm}
{\bf III. Convergence Analysis}
\vspace{2mm}

Let $\varepsilon_n$ be a sequence of positive numbers such that $\lim_{n \to \infty} \varepsilon_n =0$. 
We must analyze the convergence of each term in the integral equation as $n \to \infty$:
\begin{itemize}
\item{\bf Convergence of $u^{(\varepsilon)}$:} 
By extracting an appropriate subsequence based on \eqref{20240914 31}, the left-hand side term $u^{(\varepsilon)}(t,x)$ converges to $u(t,x)$ for all $t$ and for almost every $x \in \fR^d$. This is guaranteed by the following bound:
$$  \begin{aligned}
  \sup_{t \in [0,T']} \left(\mathrm{e}^{-\int_0^tc(s)\mathrm{d}s}\|(u^{(\varepsilon_n)} - u)(t,\cdot)\|_{\mathrm{L}_p(\fR^d)} \right)
  \leq \|f^{(\varepsilon_n)} - f \|_{\mathrm{L}_{1,p}\left((0,T') \times \fR^d, \mathrm{e}^{-\int_0^tc(s)\mathrm{d}s} \mathrm{d}t\right)}.
  \end{aligned}
  $$
\item{\bf Convergence of the Source Term:}
  The integral of the source term, 
$$
\int_0^t f^{(\varepsilon)}(s,x) \mathrm{d}s,
$$
converges straightforwardly due to our baseline assumption that 
$$
f \in \mathrm{L}_{1,p,loc}\left( (0,T) \times \fR^d\right).
$$
  
\item{\bf Convergence of the Differential Operators:}
We can bound the remaining integral, which involves the coefficients and spatial derivatives, by invoking the generalized Minkowski inequality, H\"older's inequality, and the a priori estimates established in Theorem \ref{main thm 5}. 
This approach leads to the following upper bound:
\begin{align*}
&\left\|\int_0^t \left(a^{ij}(s) u^{(\varepsilon_n)}_{x^ix^j}(s,\cdot) + b^i(s)u^{(\varepsilon_n)}_{x^i}(s,\cdot) + c(s)u^{(\varepsilon_n)}(s,\cdot) \right)\mathrm{d}s\right\|_{\mathrm{L}_p(\fR^d)} \\
&\leq \int_0^t \left(|a^{ij}(s)| \| u^{(\varepsilon_n)}_{x^ix^j}(s,\cdot)\|_{\mathrm{L}_p(\fR^d)} + |b^i(s)|\| u^{(\varepsilon_n)}_{x^i}(s,\cdot)\|_{\mathrm{L}_p(\fR^d)} \right)\mathrm{d}s \\
&\quad + \int_0^t |c(s)|\|u^{(\varepsilon_n)}(s,\cdot)\|_{\mathrm{L}_p(\fR^d)} \mathrm{d}s \\
&\lesssim \int_0^{T'} \mu_{a,b,c}(t)^{\frac{q}{q-1}} \left(w(\alpha(t)) \lambda(t) \right)^{-1/(q-1)} \mathrm{d}t  \\
&\times \int_0^{T'}  \left(\int_{\fR^d}|f^{(\varepsilon_n)}(t,x)|^p\mathrm{d}x \right)^{q/p}\mathrm{e}^{-q\int_0^tc(s)\mathrm{d}s}  w(\alpha(t)) (\lambda(t))^{1-q}\mathrm{d}t.
\end{align*}
\end{itemize}

\vspace{2mm}  
{\bf IV. Concluding the Strong Solution}
\vspace{2mm}

To finalize the proof, we apply the above convergence logic to the Cauchy sequences. By evaluating the differences $u^{(\varepsilon_n)} - u^{(\varepsilon_m)}$ and $f^{(\varepsilon_n)} - f^{(\varepsilon_m)}$ (rather than just $u^{(\varepsilon_n)}$ and $f^{(\varepsilon_n)}$) for any $n,m \in \fN$, we can rigorously pass to the limit. This confirms that $u$ is indeed a strong solution to \eqref{main eqn 1} corresponding to the source term $f$.

\end{proof}

\mysection{Discussions and Open problems}

While Corollary \ref{Fourier unique} and Theorem \ref{unique weak} successfully establish the uniqueness of weak solutions to \eqref{main eqn 1} within classes of $\mathrm{L}_p(\fR^d)$-valued functions for $p \in [1,\infty)$, a natural subsequent question is whether this uniqueness extends to bounded functions.
\begin{open}
Is a weak solution to \eqref{main eqn 1} unique within a class of $\mathrm{L}_\infty(\fR^d)$-valued functions?
\end{open}
It\^o calculus appears essential for establishing well-posedness in the $\mathrm{L}_\infty(\fR^d)$-setting. 
For instance, the existence of such a weak solution is constructed using It\^o's formula, as shown in the proof of Theorem \ref{stochastic weak existence}. 
However, extending this stochastic approach to prove uniqueness is hindered by the low regularity of the coefficients $a^{ij}(t)$ and $b^i(t)$, as highlighted in Remark \ref{Ito uniqueness}. 
Furthermore, we cannot rely on the classical maximum principle for uniqueness; the irregularity of the coefficients prevents the mollified approximations $u^{(\varepsilon)}(t,x)$ from residing in the classical space $C^{1,2}([0,T) \times \fR^d)$.

Our next inquiry focuses on the behavior of the solution near the initial time when the zero-order coefficient is highly singular. 
Specifically, we want to know if regularity estimates analogous to \eqref{main est 2} in Theorem \ref{main thm 5} can be derived even when $c(t)$ is not locally integrable near $t=0$ (i.e., when $\int_0^{T'} |c(t)| \, \mathrm{d}t = \infty$ for some $T' \in (0,T)$).
\begin{open}
Can we establish meaningful regularity estimates for a weak solution to \eqref{main eqn 1} when $c(t)$ fails to be locally integrable near the initial time?
\end{open}
Current estimates, such as \eqref{main est 2}, depend heavily on the exponential integrating factor $\mathrm{e}^{-\int_0^t c(s) \mathrm{d}s}$. 
Consequently, the failure of local integrability invalidates this approach, dictating the need for a fundamentally different class of regularity bounds. 
At present, we have no preliminary hypotheses regarding what structural form these alternative estimates might take.

\begin{open}
Can these theoretical frameworks be extended to broader classes of operators?
\end{open}

A logical next step is exploring whether our combined Itô calculus and weighted Sobolev methodology applies to degenerate evolutionary systems or non-local operators, such as time-dependent fractional Laplacians or pseudo-differential operators with severe temporal singularities. 
Proving maximal $L_p$-regularity under $t=0$ temporal blow-up for these cases would substantially broaden the applicability of non-local diffusion theory.

\bibliographystyle{plain}

\vspace{3mm}
{\bf On behalf of all authors, the corresponding author states that there is no conflict of interest.}
\vspace{3mm}

{\bf Data availability statements:} No datasets were generated or analysed during the current study.
\vspace{3mm}
\end{document}